\newcommand{\lb}{\left\langle}
\newcommand{\rb}{\right\rangle}
\newcommand{\ident}{\equiv}
\newcommand{\abs}[1]{\left|#1\right|}
\newcommand{\norm}[1]{\left\| #1 \right\|}
\newcommand{\union}{\cup}
\newcommand{\intersect}{\cap}
\renewcommand{\Hat}{\widehat}
\newcommand{\bb}{\mathbb}
\newcommand{\lap}{\Delta}
\newcommand{\Grad}{\nabla}
\newcommand{\bdy}{\partial}
\newtheorem{thm}{Theorem}
\newtheorem{coro}{Corollary}
\newtheorem{prop}{Proposition}
\newtheorem{lem}{Lemma}
\newtheorem{remark}{Remark}
\numberwithin{equation}{section}
\numberwithin{lem}{section}
\numberwithin{prop}{section}
\numberwithin{claim}{section}
\title{A Harnack-Type Inequality for a Prescribing Curvature equation on a Domain with Boundary}
\subjclass{35J60, 35J55}
\author{
  Mathew R. Gluck 
  \and
  Ying Guo
  \and
  Lei Zhang
}
\date{}
\begin{document}

\address{Department of Mathematics\\
University of Florida\\
358 Little Hall, PO Box 118105\\
Gainesville FL 32611-8105}

\email{mgluck@ufl.edu}
\email{yguo@ufl.edu}
\email{leizhang@ufl.edu}
\maketitle
\begin{abstract}
	In this paper we use the method of moving spheres to derive a Harnack-type inequality for positive solutions of
	
	\begin{equation*}
		\left\{
			\begin{array}{ll}
				\lap u + K(x)u^{(n+2)/(n - 2)} = 0
				&
				x\in B_{1}^+\subset \bb R_+^n
				\\
				\frac{\partial u}{\partial x_n} = c(x) u^{n/(n - 2)}
				&
				x\in \bdy B_{1}^+ \intersect \bdy \bb R_+^n,
			\end{array}
		\right.
	\end{equation*}
	where $n\geq 4$,  $\bb R_+^n$ is the upper half-space and $B_1^+$ is the upper half unit ball. Under suitable assumptions on $K(x)$ and $c(x)$, we show that there is a positive constant $C$ such that for all positive solutions $u$, a Harnack type inequality holds. As a consequence of this inequality we obtain the following energy estimate
	
	\begin{equation*}
		\int_{B_{1/2}^+}\left( u^{\frac{2n}{n - 2}} + \abs{\Grad u}^2\right)\; dx \leq C.
	\end{equation*}
\end{abstract}

\section{Introduction}
	In conformal geometry the well known Yamabe problem asks if it is always possible to deform the metric of a compact Riemannian manifold to make the scalar curvature constant. The Yamabe problem can be translated to finding a solution to a semi-linear elliptic equation called the Yamabe equation. Through the works of Trudinger \cite{Trudinger1968},  Aubin \cite{Aubin1976} and Schoen \cite{Schoen1989} it is proved that the Yamabe equation always has a solution. A corresponding question is called Yamabe compactness problem, which asks if all solutions to the Yamabe equation are uniformly bounded when the manifold is not conformally diffeomorphic to the standard sphere. The Yamabe compactness problem was eventually proved to be affirmative if the dimension of the manifold is no greater than $24$ by Khuri-Marques-Schoen \cite{khuri}, and negative by Brendle-Marques \cite{brendle2} for dimensions greater than $24$. A central theme in these works and other works related to the Yamabe problem is the delicate analysis of solutions of the Yamabe equation that `blow up'. This analysis provides pointwise estimates for blow-up solutions and ultimately ensures that blowing up of solutions can only occur in certain ways. In the purely local setting, one avenue toward obtaining such estimates for blow-up solutions is to obtain a Harnack-type inequality.

If the manifold has a boundary, a natural question similar to the Yamabe problem is whether it is possible to deform the metric to change the scalar
curvature and the boundary mean curvature to specific functions (see Cherrier \cite{cherrier}). Suppose $(M^n,g)$ $(n\ge 3)$ is a Riemannian manifold with boundary $\bdy M$, let $\Hat g = u^{4/(n - 2)} g$ be a conformal to $g$, then the scalar curvature $R_g$ and boundary mean curvature $h_g$ of $g$ are related to the scalar curvature $K(x)$ and boundary mean curvature $c(x)$ of $\Hat g$ by the equations

\begin{equation}\label{eq:Curvature-Relations}
	\left\{
		\begin{array}{ll}
			K
			&
			=
			- \frac{4(n - 1)}{n - 2} u^{- \frac{n + 2}{n - 2}}
			\left(\lap_g u-\frac{n - 2}{4(n - 1)} R_g u\right)
			\\
			c
			&
			=
			\frac{2}{n - 2} u^{-\frac{n}{n - 2}} \left( \partial_{\nu_g} u
			+ \frac{n - 2}{2} h_g u\right),
		\end{array}
	\right.
\end{equation}
where $\lap_g$ is the Laplace-Beltrami operator with sign convention $- \lap_g\geq 0$ and $\nu_g$ is the unit outer normal vector on $\bdy M$.
If $K$ and $c$ are constants, finding a solution to (\ref{eq:Curvature-Relations}) is called the boundary Yamabe problem (BYP). Unlike its boundary-free counterpart, the BYP is not yet completely solved. Important progress has been made by Escobar \cite{Escobar1992Annals, Escobar1992DG}, Han-Li
\cite{HanLi1999,hanli2}, Marques \cite{Marques2005},etc.

Corresponding to the BYP, a compactness question can still be asked, which can be translated to asking whether there is a uniform bound for all the solutions satisfying (\ref{eq:Curvature-Relations}) under certainly assumptions. There is a vast literature on the uniform estimate of solutions to the BYP. The readers may look into \cite{demoura1,demoura2,djadli,felli1,HanLi1999,hanli2} and the references therein for extended discussion. To fully understand the BYP and the related compactness problem, it is crucial to understand the asymptotic behavior of blowup solutions near their blowup points. In this article we study the following locally defined equation:

\begin{equation}\label{eq:Main}
	\left\{
		\begin{array}{ll}
			\lap u + K(x) u^{\frac{n + 2}{n - 2}} = 0 & \text{ in } B_{1}^+ \subset \mathbb R^n_+, \quad u>0\\
			\frac{\partial u}{\partial x_n} = c(x) u^{\frac{n}{n - 2}}
				& \text{ on } \bdy B_{1}^+ \intersect \bdy \bb R_+^n,
		\end{array}
	\right.
\end{equation}

Our main goal in this article is to prove the following Harnack type inequality:
\begin{equation}\label{hartype}
(\max_{\overline{ B_{1/3}}} u )\cdot (\min_{\overline{ B_{2/3}}} u )\le C
\end{equation}
for some $C>0$.

Harnack inequality (\ref{hartype}) reveals important information on the interaction of bubbles. It implies that all bubbles have comparable magnitude and stay far away from one another. As a consequence, an energy estimate of the following type is essentially implied:
\begin{equation}\label{enerest}
		\int_{B_{1/2}^+} \left( \abs{\Grad u}^2 + u^{\frac{2n}{n - 2}}\right)\; dx
			\leq
			C.
	\end{equation}

To the best of our knowledge, Harnack type inequality similar to (\ref{hartype}) was first discovered for prescribing scalar curvature equations (with no boundary term) by Schoen \cite{sstan}, Schoen-Zhang \cite{schoen-zhang} and Chen-Lin \cite{ChenLin1997}. In 2003 the third author and Li \cite{LiZhang2003} proved (\ref{hartype}) for equation \ref{eq:Main} when $K$ and $c$ are both constants. In 2009 the third author proved (\ref{hartype}) for the case $n=3$ only assuming $K>0$ and $c$ to be smooth functions. In this article we derive (\ref{hartype}) and the energy estimate (\ref{enerest}) under natural assumptions on $K$ and $c$ for $n\ge 4$. It is evident from the previous work of the third author and Li \cite{lizhang-low,lizhang-c2,lizhang-c3}  that inequality (\ref{hartype}) is a crucial step toward obtaining fine estimates for solutions of \eqref{eq:Main}. Comparing with the results of Li-Zhang \cite{LiZhang2003} for $K,c$ being constants and Zhang \cite{Zhang2009} for $n=3$. The case of $n\ge 4$ with non constant coefficient functions is much harder. By constraining $K$ and $c$ appropriately, we are able to handle these new complications and derive the desired estimates. Specifically, we assume throughout this article that $n\geq 4$ and that $K$ satisfies

\begin{enumerate}[label=(K\arabic{*}), ref=(K\arabic{*}),leftmargin=5.0em]
	\item \label{item:K1}
		$K\in C^{n - 2}(\overline{B_{1}^+})$, and there exists a positive constant $C_0$ such that for all $x\in \overline{B_{1}^+}$,
	
		\begin{equation}\label{eq:CLassumption}
			\abs{ \Grad^j K(x) } \leq C_0\abs{\Grad K(x)}^{\frac{n-2 - j}{n-3}}
				\qquad
				j = 1,\cdots, n-2.
		\end{equation}
		\item \label{item:K2}
		There exists a constant $\Lambda>0$ such that both
	
		\begin{equation*}
			\frac 1 \Lambda \leq K(x)\; \text{ for all } x\in \overline{B_{1}^+}
			\qquad \text{ and } \qquad
			\norm{\Grad K(x)}_{C^{n - 2}(\overline{B_{1}^+})} \leq \Lambda.
		\end{equation*}
	\item \label{item:K3}
		$K$ depends only on $x_1, \cdots, x_{n - 1}$.
\end{enumerate}
There are many functions satisfying the assumptions on $K$. One elementary such function is

\begin{equation*}
	K(x)
		=
		1 + \left( \sum_{j = 1}^{n - 1} x_j^2\right)^{\alpha},\quad \alpha\ge \frac{n-2}2.
\end{equation*}
The flatness assumption \ref{item:K1} was used by Chen and Lin in \cite{ChenLin1997} to derive (among other results) a Harnack-type inequality for positive classical solutions of $\lap u + K(x) u^{(n+2)/(n - 2)}=0$ on $B_1$. Our approach is motivated by the approach taken by Chen and Lin. However, since the situation in this article involves $B_{1}^+$ instead of $B_{1}$, we must overcome complications that were not present in Chen and Lin's boundary-free case.
The main theorem of this article is the following.
\begin{thm}\label{thm:Main} Let $u$ be a solution of \eqref{eq:Main}.
	Suppose $K$ satisfies \ref{item:K1}, \ref{item:K2} and \ref{item:K3} and that $c$ is constant. There exist constants $C(n,\Lambda, C_0)>0$ and $\epsilon(n, \Lambda, C_0)>0$ such that if $c<\epsilon$, (\ref{hartype}) holds.
\end{thm}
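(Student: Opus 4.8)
The plan is to argue by contradiction using the method of moving spheres, following the blow-up strategy of Chen--Lin adapted to the half-ball. Suppose the conclusion fails: there is a sequence of coefficient functions $K_i$ (satisfying \ref{item:K1}--\ref{item:K3} with uniform constants) and boundary constants $c_i \to 0$, together with solutions $u_i$ of \eqref{eq:Main}, such that $(\max_{\overline{B_{1/3}}} u_i)(\min_{\overline{B_{2/3}}} u_i) \to \infty$. First I would set up a selection of blow-up points: using the standard argument with the distance-to-boundary weight $(1 - \abs{x})$ (or a comparable exhaustion adapted to $B_{2/3}^+$), extract points $x_i$ and scales $\sigma_i = u_i(x_i)^{-2/(n-2)} \to 0$ so that, after the rescaling $v_i(y) = \sigma_i^{(n-2)/2} u_i(x_i + \sigma_i y)$, the $v_i$ converge in $C^2_{loc}$ to a solution $v$ of a limiting equation. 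The key dichotomy is whether the rescaled points $x_i$ stay a bounded distance (in units of $\sigma_i$) from $\bdy \bb R^n_+$ or escape to the interior; in the first case the limit solves a Neumann-type problem $\lap v + K(0) v^{(n+2)/(n-2)} = 0$ on a half-space with $\bdy_{x_n} v = 0$ on the boundary (here $c_i \to 0$ is used crucially), and in the second case it solves the entire-space equation $\lap v + K(0) v^{(n+2)/(n-2)} = 0$ on $\bb R^n$. By the classification theorems (Caffarelli--Gidas--Spruck in the interior case, and its boundary analogue; reflection across $\bdy \bb R^n_+$ reduces the Neumann case to the interior one), $v$ is a standard bubble centered appropriately.

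Next I would run the moving-spheres machinery on $v_i$ to extract the leading-order flatness information. The heart of the matter, exactly as in Chen--Lin, is that the flatness hypothesis \ref{item:CLassumption} forces the Taylor expansion of $K_i$ at the blow-up point to be essentially that of a function whose first $n-2$ derivatives are controlled by $\abs{\Grad K_i}$; comparing the moving-sphere inequality against the bubble profile yields a Pohozaev-type identity whose boundary/error terms must balance. The assumption \ref{item:K3} that $K$ is independent of $x_n$ means the relevant gradient lies in the boundary hyperplane, which is precisely the direction in which the half-space bubble is free to translate — this is what lets the Pohozaev obstruction be evaded only if $\Grad K_i$ at the blow-up point decays at the right rate, and iterating (a secondary blow-up / descent on the size of $\abs{\Grad K_i}$) drives a contradiction with the lower bound $K_i \ge 1/\Lambda$ together with the normalization. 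Throughout, one must also rule out that two or more bubbles accumulate at comparable scales: the Harnack inequality \eqref{hartype} that we are proving is itself the statement that this cannot happen, so the contradiction argument must be organized so that a single ``most concentrated'' bubble is isolated first (via the selection of $x_i$ as a point of near-maximal concentration) and the interaction of a putative second bubble is shown to violate either the Pohozaev identity or the maximum principle.

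The technical steps, in order, are: (1) the point-selection lemma producing $(x_i, \sigma_i)$ with the concentration and almost-extremal properties; (2) the local $C^2$ convergence of $v_i$ to a bubble, splitting into the interior and boundary cases and invoking the relevant Liouville classification — here the smallness of $c_i$ must be quantified so that the boundary condition passes to the Neumann limit (this is where the hypothesis $c < \epsilon$, rather than $c$ merely bounded, is consumed); (3) the Pohozaev identity on $B_{R\sigma_i}^+(x_i)$, carefully tracking the boundary integral over $\bdy \bb R^n_+$ which carries the $c_i$-term and the $h$-type curvature term; (4) the moving-spheres estimate converting \ref{item:CLassumption} into a bound of the form $\abs{x - x_i} \lesssim$ (something controlled by $\abs{\Grad K_i(x_i)}$) on the region where $u_i$ is bubble-like; and (5) the descent/iteration closing the contradiction. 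I expect step (3)--(4), the precise bookkeeping of the Pohozaev boundary terms in the half-space together with reconciling the flatness condition with the half-space bubble's translation invariance in the $x_1,\dots,x_{n-1}$ directions, to be the main obstacle: in Chen--Lin's closed setting the bubble can translate in all directions and the Kazdan--Warner-type obstruction is symmetric, whereas here the $x_n$-direction is constrained by the boundary, so one must verify that the reduced $(n-1)$-dimensional obstruction, combined with \ref{item:K3}, still produces a nontrivial contradiction rather than an identity that is automatically satisfied.
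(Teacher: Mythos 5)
Your skeleton (contradiction, blow-up selection, rescaling, classification by Caffarelli--Gidas--Spruck and its boundary analogue, then a flatness-driven sphere/plane argument) matches the paper's general strategy, but there are two concrete gaps that would stop the argument as you have outlined it. First, you assume $c_i\to 0$ and reduce the boundary blow-up case to a zero-Neumann limit, handled by reflection. The hypothesis is only the one-sided bound $c<\epsilon$: negating the theorem gives $c_i<\epsilon_i$ with possibly $c_0=\lim_i c_i$ negative and bounded away from $0$, and even for $c_0\in(0,\epsilon)$ the boundary term does not vanish in the limit. The correct limit profile is the Li--Zhu bubble satisfying $\partial U/\partial y_n=c_0U^{n/(n-2)}$ on $\{y_n=-\lim_iT_i\}$, with the center shifted by $t_0=\gamma c_0/(n-2)$. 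Handling this nonzero boundary condition is precisely the new difficulty of the theorem: in the paper the Kelvin-inverted difference $w^\lambda$ produces boundary error terms that must be absorbed by a second test function $h_2$ (built from the auxiliary radial function $g$), and the smallness $c<\epsilon$ is consumed quantitatively in the boundary differential inequality $B_i(w^\lambda+h^\lambda)\le 0$ and in starting the sphere process (inequality \eqref{eq:c0SmallnessAssumption}), not in passing to a Neumann limit; the case $c_0<0$ is treated separately using the favorable sign. Your reflection shortcut is simply unavailable here.

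Second, your proposed contradiction mechanism (a Pohozaev/Kazdan--Warner balance plus a two-bubble interaction and a ``descent on $\abs{\Grad K_i}$'') never uses the second factor of the Harnack product, i.e.\ the failure hypothesis $u_i(x_i)\min_{\overline{B_{2/3}^+}}u_i>i$. In the paper this is exactly what makes the moving spheres work: it gives the lower bound $v_i(y)\ge\sqrt{i}\,\abs y^{2-n}$ on $\bdy B(0,\epsilon_i\Gamma_i)$, which controls the outer boundary and lets the spheres be moved past the critical radius $\lambda^*$, contradicting the convergence of $v_{R,i}$ to the bubble (via \eqref{eq:Critical-Position-Inequalities}). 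Without invoking that lower bound your argument cannot distinguish a genuine blow-up of the Harnack product from an isolated standard bubble (for which no contradiction should arise), so the outline as written cannot close. Relatedly, the intermediate quantitative steps you compress into ``descent/iteration'' are in the paper two full moving-sphere arguments of their own (first $\Grad K_i\to0$ at the blow-up point, then the rate $\abs{\Grad K_i(x_i)}\,\Gamma_i^{n-3}\le C$, each requiring the Green's-function test-function constructions and, in the boundary regime, the extra $h_2$ correction), and a further complete moving-sphere argument (Theorem \ref{prop:Ti-Unbounded}) is needed just to show blow-up occurs within distance $O(\Gamma_i^{-1})$ of $\bdy\bb R_+^n$; none of these reduce to a single Pohozaev identity. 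The two-bubble/Pohozaev reasoning you sketch belongs to the derivation of the energy estimate (Corollary \ref{coro:Energy-Estimate}) from the Harnack inequality, not to the proof of the Harnack inequality itself.
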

\noindent In fact, Theorem \ref{thm:Main} holds under slightly less restrictive assumptions on $K$. Specifically, assumption \ref{item:K1} only needs to be satisfied in a neighborhood of the set of critical points of $K$. See for example \cite{ChenLin1997}. For simplicity, we allow $K$ to enjoy this property on all of $\overline{B_1^+}$.

As a corollary to Theorem \ref{thm:Main}, we have the following energy bound.

\begin{coro}\label{coro:Energy-Estimate}
	Suppose $u$, $K$, $c$ and $\epsilon$ are as in Theorem \ref{thm:Main}. There exists a positive constant $C(n, \Lambda,C_0)$ such that for all positive solutions $u$ of \eqref{eq:Main}, (\ref{enerest}) holds.
	\end{coro}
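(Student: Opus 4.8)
The plan is to deduce the energy bound \eqref{enerest} from the Harnack inequality \eqref{hartype} together with the equation \eqref{eq:Main} and standard elliptic estimates. First I would record that by \ref{item:K2} the function $K$ is bounded above and below on $\overline{B_1^+}$, and $c$ is a constant with $|c|<\epsilon$, so all coefficient functions are uniformly controlled. The key dichotomy is to compare $\min_{\overline{B_{2/3}}} u$ with $1$.

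\emph{Case 1: $\min_{\overline{B_{2/3}^+}} u \le 1$.} Then by \eqref{hartype} we get $\max_{\overline{B_{1/3}^+}} u \le C$, i.e.\ $u$ is uniformly bounded on $B_{1/3}^+$; after a covering/rescaling argument (or simply shrinking to $B_{1/2}^+$ and using that \eqref{hartype} holds on all translated/rescaled balls, hence $u$ is bounded on a fixed neighborhood of $\overline{B_{1/2}^+}$), the pointwise bound $u \le C$ on, say, $B_{3/4}^+$ controls the $L^{2n/(n-2)}$ term directly. For the gradient term I would multiply the first equation in \eqref{eq:Main} by $u\eta^2$ for a cutoff $\eta$ equal to $1$ on $B_{1/2}^+$ and supported in $B_{3/4}^+$, integrate by parts, and handle the boundary integral on $\bdy B_1^+\cap\bdy\bb R_+^n$ using the Neumann condition $\partial_{x_n} u = c u^{n/(n-2)}$; this yields $\int_{B_{1/2}^+}|\Grad u|^2 \le C\int_{B_{3/4}^+}(u^2 + K u^{2n/(n-2)}) + C|c|\int u^{2n/(n-2)}\,d\sigma$, and every term on the right is bounded since $u\le C$ (the boundary integral by a trace/pointwise bound). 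Hence \eqref{enerest} holds with a constant depending only on $n,\Lambda,C_0$.

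\emph{Case 2: $\min_{\overline{B_{2/3}^+}} u > 1$.} Then in particular $u>1$ throughout a fixed ball, so $u^{(n+2)/(n-2)} > u$ there and the equation forces quantitative information; but more to the point, one expects this case to be impossible (or to reduce to the bounded case) because a solution bounded below away from zero on $B_{2/3}^+$ combined with the superlinear absorption term $-Ku^{(n+2)/(n-2)}$ and the maximum principle gives an a priori upper bound as well. Concretely, I would note that $v=u^{-1}$ or a Kelvin-type transform, or simply the observation that $\lap u = -Ku^{(n+2)/(n-2)}\le -\Lambda^{-1}u^{(n+2)/(n-2)} \le -\Lambda^{-1}u$, makes $u$ a supersolution of a linear equation with a positive zeroth-order coefficient on a region where $u\ge 1$; comparing with the bounded solution of the corresponding linear Dirichlet problem with boundary data $\max_{\bdy B_{2/3}^+} u$ is not immediately circular, so instead the cleanest route is: apply \eqref{hartype} on every ball $B_r(x_0)$ with $\overline{B_{2r}(x_0)^+}\subset B_1^+$ to see $\min u$ and $\max u$ are comparable on compact subsets, then use a standard interior/boundary Harnack-plus-elliptic-$L^p$ argument to upgrade to a uniform bound, which again lands us in Case 1's estimates.

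The main obstacle I anticipate is the boundary term in the integration-by-parts step: one must absorb $|c|\int_{\bdy}u^{2n/(n-2)}\,d\sigma$ using the smallness of $c$ together with a trace inequality (or the pointwise bound from Case 1), and one must be careful that the cutoff and the half-ball geometry interact correctly with the Neumann condition so that no uncontrolled boundary contribution appears. A secondary point is making the covering argument clean: \eqref{hartype} as stated is for the fixed balls $B_{1/3}, B_{2/3}$, so to get a pointwise bound on a neighborhood of $\overline{B_{1/2}^+}$ one should note that the hypotheses \ref{item:K1}--\ref{item:K3} and the smallness of $c$ are scale- and translation-stable in the relevant sense (or re-prove \eqref{hartype} on slightly larger balls, which the method of moving spheres gives for free), so that Theorem \ref{thm:Main} applies on a family of balls covering $\overline{B_{1/2}^+}$.
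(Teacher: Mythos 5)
There is a genuine gap, and it starts with a misreading of the Harnack-type inequality. Inequality \eqref{hartype} bounds the \emph{product} $\bigl(\max_{\overline{B_{1/3}}}u\bigr)\bigl(\min_{\overline{B_{2/3}}}u\bigr)$; it yields a pointwise bound $\max_{\overline{B_{1/3}}}u\le C$ only when $\min_{\overline{B_{2/3}}}u$ is bounded \emph{below} (your Case 2), not when it is $\le 1$ (your Case 1). So your two cases are interchanged, and, more importantly, in the case where the minimum is small \eqref{hartype} gives no pointwise control whatsoever: this is precisely the blow-up regime, which actually occurs (a single concentrating bubble satisfies all the hypotheses), and it is exactly the case the corollary is about. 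Your plan reduces everything to ``$u$ is uniformly bounded, then do a Caccioppoli/cut-off estimate,'' but no uniform bound exists in general; the same misreading appears again in your Case 2, where you treat \eqref{hartype} as if it were the classical Harnack inequality ($\max\le C\min$ on compact subsets), which it is not and which is false for blow-up solutions. Consequently the central difficulty --- bounding the energy of solutions that are unbounded --- is not addressed at all. (The Caccioppoli step with the Neumann boundary term is fine as far as it goes, but it only handles the already-easy bounded case.)

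The paper's argument is of a different nature: one runs the selection process of Schoen/Li to locate all large local maxima (bubbles) of $u$ in $B_{2/3}^+$; near each such point $u$ is approximated by a standard bubble carrying a fixed amount of energy. The role of \eqref{hartype} is to show that the bubbles have comparable heights and that the mutual distances between local maximizers are bounded below (after rescaling so the nearest pair is at distance one, two bubbles converging to the same point would produce, via the comparability of heights, a limiting harmonic function with a positive second-order term, contradicting the Pohozaev identity). Once the bubbles are isolated and finitely many in $B_{1/2}^+$, standard elliptic theory shows that away from them $u$ behaves like a rapidly decaying harmonic function, and summing the bounded contributions of the bubbles and the small remainder gives \eqref{enerest}. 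If you want to salvage your write-up, the bounded case can be kept as a remark, but the substance of the proof must be this bubble-separation argument (or a citation-level reduction to \cite{Li1995}, \cite{HanLi1999}, \cite{LiZhang2003} as the paper does), not a uniform $L^\infty$ bound.
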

\noindent This energy estimate is a reflection of the fact that so called `bubbles', the large local maximum points of blow-up solutions to \eqref{eq:Main}, must stay far away from each other. \\

In view of the re-scaling $u(x) \mapsto R^{(n- 2)/2}u(Rx)$, Theorem \ref{thm:Main} implies corresponding Harnack inequalities on $B_R$ in general,
as long as the scalar curvature function and the mean curvature function still satisfy the same assumptions after scaling. The proof of Theorem \ref{thm:Main} is by contradiction. By the contradiction assumption, we obtain a sequence of blow-up solutions of \eqref{eq:Main}. After showing that blow-up can only occur near $\bdy B_1^+\intersect \bdy \bb R_+^n$, we use the method of moving spheres to derive a contradiction.

This paper is organized as follows. In Section \ref{section:Rescaling-and-Selection} we  use a standard selection process of Schoen \cite{Schoen1989} and Li \cite{Li1995} and the classification theorems of Caffarelli-Gidas-Spruck \cite{CaffarelliGidasSpruck1989} and Li-Zhu \cite{LiZhu1995} to obtain a convenient rescaling of the blow-up solutions. In Section \ref{section:Proof of Proposition} we show that blow-up points must be close to $\bdy B_1^+\intersect \bdy\bb R_+^n$, see Proposition \ref{prop:Ti-Unbounded}. This is achieved through three applications of the method of moving spheres (MMS). In particular MMS is first used to show that $\Grad K$ must vanish at a blow-up point, then MMS is used again to show that $\Grad K$ must vanish rapidly at a blow-up point, and a final application of MMS is used to show that blow-up can only occur near $\bdy B_1^+\intersect \bdy \bb R_+^n$. In Section \ref{section:Proof of Theorem}, we prove the Harnack-type inequality of Theorem \ref{thm:Main}. As in the proof that blow-up can only occur near $\bdy B_1^+\intersect \bdy \bb R_+^n$, the proof of Theorem \ref{thm:Main} is via three application of MMS; once to show $\Grad K$ vanishes at a blow-up point, once to show $\Grad K$ vanishes rapidly at a blow-up point, and finally to complete the proof the Theorem \ref{thm:Main}.  In Section \ref{section:Energy-Estimate} we give an overview of how to obtain the energy estimate in Corollary \ref{coro:Energy-Estimate} from the Harnack-type inequality. Since the derivation of Corollary \ref{coro:Energy-Estimate} from Theorem \ref{thm:Main} is standard, only the main points of the proof will be mentioned. The interested reader can consult, for example \cite{Li1995}, \cite{HanLi1999} and \cite{LiZhang2003} for details.

As notational conventions, we will use the following. The critical exponent $(n +2)/(n-2)$ will be denoted by $n^*$. We will use $o(1)$ to denote any quantity that tends to zero as $i\to\infty$. The symbols $C$, $C_1$ and $C_2$ will denote constants that depend only on $n$ and $\Lambda$ and will be different from line to line. The functions $v_{i,R}$ and $U_R$ as well as the domains $\Omega_i$ and $\Sigma_\lambda$ (to be defined) will be used in both Sections \ref{section:Proof of Proposition} and \ref{section:Proof of Theorem}, but will have different definitions in those sections.
	
\section{Rescaling and Selection}
	\label{section:Rescaling-and-Selection}
	
Suppose the Harnack-type inequality \eqref{hartype} fails. For each $i\in \bb N$, there is a positive solution $u_i$ of \eqref{eq:Main} with $K$ replaced with $K_i$ and $c$ replaced by $c_i$ such that
\begin{equation}\label{eq:Harnack-Type-Fail}
	\left(
		\max_{\overline{B_{1/3}^+}} u_i
	\right)
	\left(
		\min_{\overline{B_{2/3}^+}} u_i
	\right)
	>i.
\end{equation}
Note that $\Lambda$ and $C_0$ as given in the assumptions on $K$ are uniform in $i$. Without loss of generality we assume
$$\lim_{k\to \infty} K_i(x_i)=n(n-2). $$
By a standard selection process, see for example Schoen \cite{sstan} and Li \cite{Li1995} we may choose $x_i\in B_{1/2}^+\intersect \overline{\bb R_+^n}$ such that, for some $\sigma_i\to 0$,
$$ u_i(x_i)\ge \max_{B_{1/3}+}u_i, \quad u_i(x_i) \geq u_i(x) \quad \forall x\in B(x_i,\sigma_i)\cap \mathbb R^n_+, $$ and
$u_i(x_i)^{\frac 2{n-2}}\sigma_i\to \infty. $
For such $x_i$, \eqref{eq:Harnack-Type-Fail} yields

\begin{equation}\label{eq:Blow-Up}
	u_i(x_i) \min_{\overline{B_{2/3}^+}} u_i >i,
\end{equation}
which implies $u_i(x_i)\to \infty$. If $u_i$ are positive solutions of \eqref{eq:Main} and $x_i$ are local maximum points of $u_i$ for which \eqref{eq:Blow-Up} holds, $u_i$ is said to blow up, and a blow-up point is the limit of any convergent subsequence of $x_i$ for which \eqref{eq:Blow-Up} occurs. Setting

\begin{equation}\label{eq:Mi-Gammai-Ti-Notation}
	M_i = u_i(x_i),
	\qquad
	\Gamma_i = M_i^{\frac{2}{n-2}},
	\qquad
	T_i = x_{in}\Gamma_i
	\qquad
	\text{ and }
	\qquad
	E_i= B(- \Gamma_ix_i, 2\Gamma_i) \intersect\{ y_n >- T_i\},
\end{equation}
and applying standard arguments using the classification theorems of Caffarelli-Gidas-Spruck \cite{CaffarelliGidasSpruck1989} and Li-Zhu \cite{LiZhu1995} the functions

\begin{equation}\label{eq:viBar-Definition}
	\bar{v}_i(y)  = \frac{1}{M_i} u_i(x_i + \Gamma_i^{-1} y),
	\qquad
	y\in \overline{E_i}
\end{equation}
converge in $C^2$ over finite domains in the following  two cases.

\begin{enumerate}[label= Case \arabic{*}:, ref=Case \arabic{*},leftmargin=5.0em]
	\item \label{case:Ti-Unbounded}
		If there is a subsequence along which $T_i\to \infty$, then after passing to a further subsequence, we have $\bar v_i\to U$ in $C_{\rm loc}^2(\bb R^n)$, where 
		
		\begin{equation}\label{eq:Ti-Unbounded-Standard-Bubble}
			U(y)
				=
				\left( 1 + \abs y^2\right)^{- \frac{n -2}{2}}.
		\end{equation}
	\item  \label{case:Ti-Bounded}
		If $\{T_i\}$ is bounded then after passing to a subsequence we assume that $T_i$ converges. In this case, after passing to a further subsequence, $\bar v_i$ converges in $C^2$ over compact subsets of $\bb R^n \intersect \{ y_n\geq- \lim_i T_i\}$ to a classical solution $U$ of
		\begin{equation}\label{eq:Standard-Bubble-Equation-Ti-Bounded}
			\left\{
				\begin{array}{ll}
					\lap U + n(n-2) U^{n^*} = 0
					&
					y\in \bb R^n \intersect \{y_n > - \lim_i T_i\}
					\\
					\frac{\partial U}{\partial y_n} = \lim_i c_i U^{n/(n - 2)}
					&
					y\in \{ y_n = - \lim_i T_i\}
					\\
					U(0) = 1 = \max_{y\in \bb R^n \intersect \{y_n\geq - \lim_i T_i\}} U(y).
				\end{array}
			\right.
		\end{equation}
\end{enumerate}

Since the selection process and application of the classification theorems are standard, their applications are not presented here.Similar techniques have been used in  \cite{ChenLin1997}, \cite{LiZhang2003}, \cite{Zhang2002}, \cite{Zhang2009},etc. \\

The proof of Theorem \ref{thm:Main} is now split into two steps according to \ref{case:Ti-Unbounded} and \ref{case:Ti-Bounded}. In the first step we prove \ref{case:Ti-Unbounded} cannot occur, which shows that blow-up cannot occur far away from $\bdy B_1^+ \intersect \bdy \bb R_+^n$. In the second step, with the knowledge that blow-up can only occur near $\bdy B_1^+ \intersect \bdy \bb R_+^n$, we prove Theorem \ref{thm:Main}.

\section{Blow-up Can Only Occur Near $\bdy B_1^+\intersect \bdy \bb R_+^n$}
	\label{section:Proof of Proposition}

The proof of Theorem \ref{thm:Main} relies on delicate analysis of the behavior of $u_i$ near a blow-up point. As a first step, we prove the following theorem which says that blow-up can only near $\bdy B_1^+ \intersect \bdy \bb R_+^n$. In this theorem, we only require $c$ to be bounded.

\begin{thm}\label{prop:Ti-Unbounded}
	Suppose $\{u_i\}$ is  a sequence of positive solutions of \eqref{eq:Main} that satisfies (\ref{eq:Harnack-Type-Fail}) and that $\abs{c(x)} \leq C$ for all $x\in \bdy B_1^+\intersect \bdy \bb R_+^n$ and some $C>0$. There exists a constant $C_1>0$ independent of $i$ such that if $x_i$ is a local maximizer of $u_i$ for which \eqref{eq:Blow-Up} holds, then
	
	\begin{equation*}
			x_{in}\; u_i(x_i)^{\frac 2{n - 2}}\leq C_1,
	\end{equation*}
	where $x_{in}$ denotes the $n^{\rm th}$ coordinate of $x_i$.
\end{thm}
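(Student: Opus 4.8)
The plan is to argue by contradiction: suppose $T_i = x_{in}\Gamma_i \to \infty$ along a subsequence, so that we are in \ref{case:Ti-Unbounded} and the rescaled functions $\bar v_i$ converge in $C^2_{\mathrm{loc}}(\bb R^n)$ to the standard bubble $U(y) = (1+|y|^2)^{-(n-2)/2}$. The strategy is to run the method of moving spheres (MMS) on $\bar v_i$ centered at the origin (equivalently, centered at $x_i$ in the original variables), exploiting the fact that $U$ is exactly the profile for which moving spheres becomes stationary. Because $T_i\to\infty$, the boundary $\{y_n = -T_i\}$ recedes to infinity, so near the origin the problem looks purely interior and the boundary condition $\partial_n u = c u^{n/(n-2)}$ contributes only lower-order error terms controlled by the bound $|c|\le C$. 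This is exactly the setting in which the Chen--Lin-type analysis for $\lap u + K u^{n^*}=0$ on $B_1$ applies, with the flatness condition \ref{item:K1} on $K$.

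The key steps, in order, would be: (1) Set up the Kelvin-type transform $\bar v_i^\lambda(y) = (\lambda/|y|)^{n-2}\bar v_i(\lambda^2 y/|y|^2 + \text{center})$ and define $w_i^\lambda = \bar v_i - \bar v_i^\lambda$ on the annular region $\Sigma_\lambda$, and establish that for every small $\lambda$ one has $w_i^\lambda \ge 0$ for $i$ large, using the convergence $\bar v_i \to U$ and the conformal invariance properties of $U$ (moving spheres on $U$ is exactly critical). (2) Define $\bar\lambda_i$ as the supremum of $\lambda$ for which $w_i^\mu\ge 0$ for all $\mu\le\lambda$; the first application of MMS shows $\bar\lambda_i$ stays bounded (otherwise $u_i$ would be globally comparable to a bubble, contradicting the fixed geometry of $B_1^+$), and forces $\Grad K_i$ to vanish at the blow-up point in the limit. (3) A second, sharper application of MMS upgrades this to a quantitative statement: $|\Grad K_i(x_i)|$ together with higher derivatives must vanish \emph{rapidly} as $i\to\infty$ (faster than a suitable power of $M_i$), using the full strength of the flatness hypothesis \eqref{eq:CLassumption} to control the Pohozaev-type error integrals $\int \langle y, \Grad K_i\rangle \bar v_i^{2n/(n-2)}$. (4) The final application of MMS then derives an outright contradiction: the rapid vanishing of $\Grad K_i$ forces the moving-sphere procedure to continue past any fixed radius, which would make $u_i(x_i)\min_{\overline{B_{2/3}^+}}u_i$ bounded, contradicting \eqref{eq:Blow-Up}. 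Throughout, the hypothesis $T_i\to\infty$ is used to show the boundary integrals over $\{y_n = -T_i\}$ are negligible; one needs decay estimates $\bar v_i(y)\le C|y|^{-(n-2)}$ type bounds on $E_i$ (themselves obtained from a preliminary, cruder moving-sphere argument) to see that the boundary contributions decay like a negative power of $T_i$.

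The main obstacle I anticipate is controlling the boundary terms in the Pohozaev identity and in the comparison argument for $w_i^\lambda$ on a half-ball-with-receding-flat-boundary, rather than on a full ball as in Chen--Lin. Even though $|c_i|$ is only assumed bounded and $T_i\to\infty$, one must show that the flux through $\{y_n = -T_i\}$ — which involves $\int_{\{y_n=-T_i\}} c_i \bar v_i^{n/(n-2)}(\cdots)$ — is genuinely lower order; this requires a good a priori upper bound on $\bar v_i$ away from the origin that is uniform in $i$, which in turn is bootstrapped from the moving-sphere inequalities themselves. A secondary difficulty is that the center of the moving spheres, $x_i$, may lie at positive (rescaled) distance $T_i$ from the boundary, so the spheres $\partial B_\lambda(x_i)$ remain interior for all $\lambda < T_i$; one has to carefully track the interplay between $\lambda$ and $T_i$ and ensure the three successive applications of MMS can all be carried out while $\lambda$ stays in the regime $\lambda \ll T_i$, or else handle the transitional regime $\lambda \sim T_i$ separately. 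Once these boundary estimates are in place, the argument reduces to the interior Chen--Lin scheme applied to the limit bubble $U$ on all of $\bb R^n$.
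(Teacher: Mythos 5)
Your three-stage skeleton (gradient of $K$ vanishes at the blow-up point, then vanishes rapidly, then a final moving-spheres contradiction) is indeed the structure of the paper's proof, but the way you close each stage contains genuine gaps. First, you never identify the mechanism that forces $\Grad K_i(x_i)\to 0$. In the paper (Proposition \ref{lem:Vanishing-Ti-Unbounded}) one does \emph{not} center the spheres at the blow-up point: assuming $\abs{\Grad K_i(x_i)}\geq\delta>0$, one recenters at $x_i+\Gamma_i^{-1}Re$ with $e=\lim\Grad K_i(x_i)/\abs{\Grad K_i(x_i)}$ and $R$ large, so that $H_i(y^\lambda-Re)-H_i(y-Re)\leq -C\Gamma_i^{-1}(\abs y-\lambda)$ on a fixed cone; this favorable sign, absorbed into a Green's-function test function $h^\lambda$, is what lets the spheres pass the critical radius $\lambda^*=\sqrt{1+R^2}$ of the shifted bubble $U_R$ and contradict \eqref{eq:Critical-Position-Inequalities} together with the $C^2_{\rm loc}$ convergence. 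Your version (``$\bar\lambda_i$ stays bounded, otherwise $u_i$ would be globally comparable to a bubble'') is not an argument: with spheres centered at $x_i$ the $K$-error has no sign and nothing forces a contradiction from $\Grad K_i(x_i)\not\to 0$. Second, your endgame is wrong in direction: moving spheres yields \emph{lower} bounds for $v_i$, not upper bounds, so it cannot ``make $u_i(x_i)\min_{\overline{B_{2/3}^+}}u_i$ bounded''; in fact \eqref{eq:Harnack-Type-Fail} is an \emph{input} to the argument (it gives $v_i(y)\geq\sqrt i\,\abs y^{2-n}$ on $\bdy B(0,\epsilon_i\Gamma_i)$, see \eqref{eq:vi-Lower-Bound-Upper-Boundary}, which is exactly what keeps $w^\lambda+h^\lambda\geq0$ on the outer boundary), so it cannot simultaneously be the thing contradicted. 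The actual contradiction in the final step is local: with the rapid vanishing rate one moves the spheres over the fixed window $\lambda\in[1/2,2]$ past $\lambda^*=1$, and $w^{\lambda}+h^{\lambda}\geq0$ with $h^\lambda=\circ(1)\abs y^{2-n}$ would force $U\geq U^{\lambda}$ in the limit for $\lambda>1$, which is false. The spheres never need to move ``past any fixed radius.''

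Related to this, your anticipated difficulties about the boundary are largely misplaced, and the estimates you propose to supply them are not the ones needed. Since $\lambda$ stays in a bounded window around the bubble's critical radius while $T_i\to\infty$, the ``transitional regime $\lambda\sim T_i$'' never occurs; and no global decay bound $\bar v_i(y)\leq C\abs y^{2-n}$ (nor a preliminary bootstrap moving-sphere argument) is required, because the Kelvin inversion $v_{R,i}^\lambda$ only samples $v_{R,i}$ on the fixed ball $B_\lambda$, where $C^2$ convergence to the bubble is available. The boundary $\{y_n=-T_i\}$ is handled not by showing a flux integral is ``lower order'' but by observing that the boundary error term $Q_2^\lambda$ in \eqref{eq:Q2-Lambda} has a \emph{favorable sign}, $Q_2^\lambda\leq -CT_i\lambda^{n-2}\abs y^{-n}$ (Lemma \ref{lem:Q2Lambda-Estimate}), precisely because $\abs y\geq T_i\to\infty$ there; this is why mere boundedness of $c$ suffices and why this theorem is easier than Theorem \ref{thm:Main}. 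Finally, the rapid-vanishing step in the paper (Proposition \ref{lem:Fast-Vanishing-Ti-Unbounded}) is carried out with a modified source $\Hat Q^\lambda$ and Green's-representation test functions rather than Pohozaev identities; a Pohozaev route might be workable, but as written you give no treatment of the error and boundary terms it would generate, so that step too is not yet a proof.
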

The proof of Theorem \ref{prop:Ti-Unbounded} is by contradiction. Specifically, MMS will be used three times; first, in Subsection \ref{subsection:Grad-Ki-Vanishing-Ti-Unbounded} to show that $\Grad K_i(x_i)$ vanishes, second in Subsection \ref{subsection:Fast-Grad-Ki-Vanishing-Ti-Unbounded} to show that $\Grad K_i(x_i)$ vanishes rapidly, and finally in Subsection \ref{subsection:Proof-of-Proposition} to complete the proof of Theorem \ref{prop:Ti-Unbounded}. The argument in this section is similar to that in \cite{ChenLin1997}. However Chen-Lin used a complicated moving plane method which involves two Kelvin transformations and a translation. We modify their approach by using a much simpler moving spheres to make the picture much easier to understand ( see \cite{Zhang2002}).

Let $M_i$, $\Gamma_i$ and $T_i$ be as in \eqref{eq:Mi-Gammai-Ti-Notation} and consider the functions

\begin{equation*}
	v_i(y) = \frac{1}{M_i} u_i(x_i + \Gamma_i^{-1} y),
	\qquad
	y\in \overline B(0, \frac18 \Gamma_i)\intersect \{ y_n \geq - T_i\}
\end{equation*}
(for the proof of Theorem \ref{prop:Ti-Unbounded}, $v_i$ is the same as $\bar v_i$ in \eqref{eq:viBar-Definition} and we omit the ``bar" in the notation). Observe that if $y\in \bdy B(0,\Gamma_i/8) \intersect \{y_n\geq - T_i\}$, then by \eqref{eq:Harnack-Type-Fail}

\begin{equation*}
	v_i(y)
		=
		\Gamma_i^{2-n} M_i u_i(x_i + \Gamma_i^{-1}y)
		\geq
		C(n) i \abs y^{2-n}.
\end{equation*}
In fact, we may choose $\epsilon_i\to 0$ slowly such that

\begin{equation}\label{eq:vi-Lower-Bound-Upper-Boundary}
	v_i(y)
		\geq
		\sqrt i \abs y^{2-n},
		\qquad
		y\in \bdy B(0,\epsilon_i\Gamma_i)\intersect\{y_n\geq - T_i\}.
\end{equation}
Define

\begin{equation*}
	\Omega_i = B(0,\epsilon_i\Gamma_i)\intersect\{y_n>- T_i\},
	\qquad
	\bdy'\Omega_i = \bdy\Omega_i\intersect\{y_n = - T_i\}
	\qquad
	\text{ and }
	\qquad
	\bdy''\Omega_i = \bdy\Omega_i \setminus \bdy'\Omega_i.
\end{equation*}
Elementary computations show that $v_i$ satisfies

\begin{equation}\label{eq:vi-Equations}
	\left\{
		\begin{array}{ll}
			\lap v_i + H_i(y) v_i^{n^*} = 0
			&
			y\in \Omega_i
			\\
			\frac{\partial v_i}{\partial y_n} = c_i(x_i + \Gamma_i^{-1} y) v_i^{n/(n - 2)}
			&
			y\in \bdy' \Omega_i,
		\end{array}
	\right.
\end{equation}
where $H_i(y) = K_i(x_i + \Gamma_i^{-1} y)$. By the contradiction hypothesis, there is a subsequence of $T_i$ along which $T_i\to \infty$, so \ref{case:Ti-Unbounded} applies. Before we can prove Theorem \ref{prop:Ti-Unbounded} we need to show that $\Grad K_i(x_i)$ vanishes rapidly. This will be done in two steps. The first step shows that $\Grad K_i(x_i)$ vanishes and is proven in Subsection \ref{subsection:Grad-Ki-Vanishing-Ti-Unbounded}. The second step shows that $\Grad K_i(x_i)$ vanishes rapidly and is proven in Subsection \ref{subsection:Fast-Grad-Ki-Vanishing-Ti-Unbounded}. For notational convenience, in subsections \ref{subsection:Grad-Ki-Vanishing-Ti-Unbounded}-\ref{subsection:Proof-of-Proposition} we will use $\abs{\Grad K_i(x_i)} = \delta_i$.
%
%
%
\subsection{Vanishing of $\Grad K_i(x_i)$}
\label{subsection:Grad-Ki-Vanishing-Ti-Unbounded}

\begin{prop}\label{lem:Vanishing-Ti-Unbounded}
	There exists a subsequence along which $\delta_i\to 0$.
\end{prop}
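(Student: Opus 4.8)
The plan is to argue by contradiction using the method of moving spheres (MMS). Suppose, along the subsequence where $T_i\to\infty$, that $\delta_i=|\Grad K_i(x_i)|$ does not tend to zero; then after passing to a further subsequence we may assume $\delta_i\geq\delta_0>0$ for all $i$. We will exploit this lower bound together with the flatness assumption \ref{item:K1} to run MMS on the rescaled functions $v_i$ and reach a contradiction with the known $C^2_{\rm loc}$ convergence $v_i\to U$, where $U(y)=(1+|y|^2)^{-(n-2)/2}$, coming from \ref{case:Ti-Unbounded}.

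The key steps are as follows. First I would set up the moving-spheres machinery: for a point $y^0$ (to be taken near the origin, e.g. $y^0=0$ or a small perturbation thereof adapted to the direction of $\Grad K_i$) and a radius $\lambda>0$, introduce the Kelvin transform $v_i^{y^0,\lambda}(y)=(\lambda/|y-y^0|)^{n-2}v_i\big(y^0+\lambda^2(y-y^0)/|y-y^0|^2\big)$ and the difference $w_{i,\lambda}=v_i-v_i^{y^0,\lambda}$ on the region $\Sigma_\lambda$ exterior to the sphere of radius $\lambda$. Second, using the equation \eqref{eq:vi-Equations} for $v_i$ one derives the differential inequality satisfied by $w_{i,\lambda}$: the leading nonlinear terms cancel up to the conformal factor, and the inhomogeneous term is governed by the oscillation of $H_i$, namely by a quantity of the form $H_i(y)-(\lambda/|y-y^0|)^{?}H_i(y^0+\cdots)$, which by the flatness hypothesis \ref{item:K1} (applied to $K_i$, rescaled) is controlled in terms of $|\Grad K_i(x_i)|=\delta_i$ and powers of $|y|$. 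Third, one checks the start of the process: for $\lambda$ small (comparable to a small absolute constant, using $U$ as the model and the lower bound \eqref{eq:vi-Lower-Bound-Upper-Boundary} on $\bdy''\Omega_i$ to absorb boundary contributions) one has $w_{i,\lambda}\geq 0$ in $\Sigma_\lambda$. Fourth, one defines $\bar\lambda_i=\sup\{\lambda: w_{i,\mu}\geq 0 \text{ in }\Sigma_\mu \ \forall \mu\le\lambda\}$ and shows, via the maximum principle and Hopf lemma applied on both the interior equation and the Neumann-type boundary condition on $\bdy'\Omega_i$ (this is where the half-space geometry forces extra care compared to Chen-Lin), that $\bar\lambda_i$ must be bounded above by a constant depending only on $n,\Lambda$. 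Finally, translating $\bar\lambda_i$ back through the $C^2_{\rm loc}$ limit, the limiting function $U$ would have to satisfy a moving-sphere identity that forces $\Grad K$ to vanish at the blow-up point at order matching the flatness — contradicting $\delta_i\geq\delta_0>0$, since the error term produced by a genuinely nonzero gradient has the wrong sign to allow the sphere to stop early. Hence $\delta_i\to 0$ along a subsequence.

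I expect the main obstacle to be the boundary analysis on $\bdy'\Omega_i$: unlike the interior moving-plane/sphere arguments of Chen-Lin \cite{ChenLin1997}, here the Kelvin transform interacts with the flat boundary $\{y_n=-T_i\}$, the center $y^0$ generally does not lie on that boundary, and the Neumann condition $\partial v_i/\partial y_n=c_i(\cdots)v_i^{n/(n-2)}$ transforms into an inequality rather than an identity, with error terms involving $c_i$ and the geometry of the reflected sphere. Since we only assume $|c|\le C$ here (not smallness), one must verify that, because $T_i\to\infty$, the sphere of radius $\lambda=O(1)$ stays uniformly far from $\bdy'\Omega_i$, so the boundary contributions are in fact negligible and the effective picture is the interior one modeled on $U$ on all of $\bb R^n$. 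Making this quantitative — pinning down how $T_i\to\infty$ lets us ignore $\bdy'\Omega_i$ for the range of $\lambda$ that matters — is the technical heart of the argument; the rest follows the by-now-standard MMS template (see \cite{Zhang2002}).
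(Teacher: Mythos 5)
There is a genuine gap: your sketch never explains how the assumption $\delta_i\ge\delta_0>0$ actually enters the moving-sphere argument, and the mechanism you describe would not produce a contradiction. In the paper the whole point is a translation in the direction of the (assumed non-vanishing) gradient: one sets $v_{R,i}(y)=v_i(y-Re)$ with $\delta_i^{-1}\Grad K_i(x_i)\to e$ and $R$ large, so that the critical radius for the limit profile is $\lambda^*=\sqrt{1+R^2}$ and, crucially, the scalar-curvature error $Q_1^\lambda=(H_i(y^\lambda-Re)-H_i(y-Re))(v_{R,i}^\lambda)^{n^*}$ acquires a \emph{favorable sign} of definite size $\Gamma_i^{-1}(\abs y-\lambda)$ on the cone $\Omega_\lambda$ (Lemma \ref{lem:Scalar-Curvature-Function-Estimates}). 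Feeding this into the Green's potential \eqref{eq:Naive-hLambda-Definition} gives a test function $h^\lambda$ whose negative part carries an extra factor $\log\lambda\sim\log R$ over the positive errors (Lemma \ref{lem:hLambda-Estimates-Ti-Unbounded}); this is what makes $h^\lambda\le 0$, hence $L_i(w^\lambda+h^\lambda)\le 0$, and lets the sphere be moved from $\lambda_0=R$ all the way to $\lambda_1=R+2>\lambda^*$. The contradiction is then with \eqref{eq:Critical-Position-Inequalities} and the convergence $v_{R,i}\to U_R$: a non-vanishing gradient pushes the spheres \emph{past} the critical position. Your version keeps the center at (or near) the origin and argues the opposite way — that the sphere stops at a bounded $\bar\lambda_i$ and that a ``moving-sphere identity'' at the stopping radius forces $\Grad K$ to vanish. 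With the center at the origin the difference $H_i(y^\lambda)-H_i(y)$ has no sign (it is positive on one half-cone and negative on the other, both of size $\Gamma_i^{-1}\delta_i\abs{\abs y-\lambda}$), so there is no single-signed error to exploit, no quantitative test function is constructed, and the claimed contradiction with $\delta_i\ge\delta_0$ never materializes; indeed your steps are internally in tension (you assert both that $\bar\lambda_i$ is bounded and that the nonzero gradient has ``the wrong sign to allow the sphere to stop early'').

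On the boundary issue you flag: the concern is legitimate but the resolution is not that the sphere ``stays far from $\bdy'\Omega_i$'' so the boundary can be ignored. The region $\Sigma_\lambda$ does reach $\{y_n=-T_i\}$ (when that face is present), and one must verify a boundary differential inequality there. What saves the day is a sign, not negligibility: the Kelvin transform produces in $Q_2^\lambda$ a term proportional to $-T_i$, giving $Q_2^\lambda\le -CT_i\lambda^{n-2}\abs y^{-n}$ (Lemma \ref{lem:Q2Lambda-Estimate}), which, since $T_i\to\infty$, dominates the bounded-$c$ contributions and the $\circ(1)\abs y^{-n}$ normal derivative of $h^\lambda$; the same $T_i\to\infty$ sign is also what rules out a nonpositive boundary minimum when starting the spheres (Lemma \ref{lem:Moving-Spheres-Can-Start}). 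So your intuition that $T_i\to\infty$ neutralizes the boundary is correct in spirit, but the heart of the proof of this proposition is the translated-center construction and the logarithmic gain in the Green's-function test function, which your proposal is missing.
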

The proof of Proposition \ref{lem:Vanishing-Ti-Unbounded} is by contradiction. Namely, we suppose there is $\delta>0$ such that $\inf_i\delta_i \geq \delta >0$ and use the moving sphere method to derive a contradiction. By assumption \ref{item:K3} we may assume with no loss of generality that there is a subequence along which

\begin{equation*}
	\frac{\Grad K_i(x_i)}{\delta_i} \to e = (1, 0, \cdots, 0).
\end{equation*}
For $R\gg1$ fixed and to be determined, define the translations

\begin{equation*}
	v_{R,i}(y) = v_i(y - Re)
	\qquad
	\text{ and }
	\qquad
	U_R(y) = U(y - Re)
\end{equation*}
and the Kelvin inversions

\begin{equation*}
	v_{R,i}^\lambda(y)
		=
		\left(\frac{\lambda}{\abs y}\right)^{n - 2} v_{R,i}(y^\lambda)
	\qquad
	\text{ and }
	\qquad
	U_R^\lambda(y)
		=
		\left(\frac{\lambda}{\abs y}\right)^{n - 2}U_R(y^\lambda),
\end{equation*}
where $\lambda>0$ and $y^\lambda = \lambda^2 y/\abs y^2$. Clearly $v_{R,i}$, $U_R$ and their Kelvin inversions are well-defined in $\Sigma_\lambda = \Omega_i\setminus \overline B_\lambda$. For notational convenience, we set $\bdy'\Sigma_\lambda = \bdy\Sigma_\lambda \intersect \{y_n = -T_i\}$. Setting $\lambda^* = \sqrt{1 + R^2}$ and computing directly, it is easy to see that

\begin{equation}\label{eq:Critical-Position-Inequalities}
	\left\{
		\begin{array}{lll}
			(U_R - U_R^\lambda)(y) >0
			&
			y\in \bb R^n\setminus \overline B_\lambda
			&
			\text{ if } \lambda< \lambda^*
			\\
			(U_R - U_R^\lambda)(y) <0
			&
			y\in \bb R^n\setminus \overline B_\lambda
			&
			\text{ if } \lambda>\lambda^*.
		\end{array}
	\right.
\end{equation}
For $\lambda_0 = R$ and $\lambda_1 = R +2$, we have $\lambda^*\in [\lambda_0, \lambda_1]$, so we only consider $\lambda$ in this range. Define

\begin{equation*}
	w^\lambda(y) = v_{R,i}(y) - v_{R,i}^\lambda(y)
	\qquad
	y\in \Sigma_\lambda.
\end{equation*}
For convenience, we suppress the $i$-dependence in this notation. Elementary computations show that $w^\lambda$ satisfies

\begin{equation}\label{eq:wLambda-Equations-Ti-Unbounded}
	\left\{
		\begin{array}{ll}
			L_i w^\lambda(y) = Q_1^\lambda(y)
			&
			y\in \Sigma_\lambda
			\\
			B_i w^\lambda(y) = Q_2^\lambda
			&
			y\in \bdy'\Sigma_\lambda
			\\
			w^\lambda(y) = 0
			&
			y\in \bdy\Sigma_\lambda \intersect \bdy B_\lambda,
		\end{array}
	\right.
\end{equation}
where

\begin{equation}\label{eq:Differential-Operators=Definitions}
	\begin{array}{l}
		L_i = \lap + H_i(y - Re)\xi_1(y)
		\\
		B_i = \frac{\partial}{\partial y_n} - c_i(x_i + \Gamma_i^{-1}(y - Re)) \xi_2(y)
	\end{array}
\end{equation}
are the interior and boundary operators respectively,

\begin{equation}\label{eq:Mean-Value-Coefficient-1-Ti-Unbounded}
	\xi_1(y)
		= n^* \int_0^1
		\left( t v_{R,i}(y) + (1 - t) v_{R,i}^\lambda(y)\right) ^{\frac{4}{n - 2}}
		\; dt
\end{equation}
\begin{equation}\label{eq:Mean-Value-Coefficient-2-Ti-Unbounded}
	\xi_2(y)
		= \frac{n}{n - 2} \int_0^1
		\left( t v_{R,i}(y) + (1 - t) v_{R,i}^\lambda(y)\right) ^{\frac{2}{n - 2}}
		\; dt
\end{equation}
are obtained from the mean value theorem,

\begin{equation}\label{eq:Interior-Error-Function}
	Q_1^\lambda(y) = (H_i(y^\lambda - Re) - H_i(y - Re))(v_{R,i}(y)^\lambda)^{n^*}
\end{equation}
is an error term to be controlled by a test function and

\begin{eqnarray}\label{eq:Q2-Lambda}
	Q_2^\lambda(y)
		& = &
		\left(
			c_i(x_i + \Gamma_i^{-1}(y - Re)) - c_i(x_i + \Gamma_i^{-1}(y^\lambda - Re))
		\right)
		\left( v_{R,i}^\lambda(y)\right)^{n/(n - 2)}
		\notag\\
		& &
		-
		\frac{\lambda^{n - 2}}{\abs y^{n+2}} T_i
		\left(
				(n - 2) \abs y^2 v_{R,i}(y^\lambda)
				+
				2\lambda^2 \lb \Grad v_{R,i}(y^\lambda), y \rb
		\right).
\end{eqnarray}
We need to construct  a test function $h^\lambda$ such that both

\begin{equation}\label{eq:Perturbation-Test-Function}
	h^\lambda(y) = \circ(1) \abs y^{2-n}
	\qquad
	y\in \Sigma_\lambda
\end{equation}
and

\begin{equation}\label{eq:Desired-Differential-Inequalities}
	\left\{
		\begin{array}{ll}
			L_i(w^\lambda + h^\lambda)(y) \leq 0
			&
			y\in \Sigma_\lambda
			\\
			B_i(w^\lambda + h^\lambda)(y) \leq 0
			&
			y\in \bdy'\Sigma_\lambda \intersect \overline {\mathcal O}_\lambda,
		\end{array}
	\right.
\end{equation}
where

\begin{equation}\label{eq:OLambda-Definition}
	{\mathcal O}_\lambda
		=
		\{y\in \Sigma_\lambda : (v_{R,i} - v_{R,i}^\lambda)(y) \leq v_{R,i}^\lambda(y)\}.
\end{equation}
Such a test function is a perturbation of $w^\lambda$ that allows the maximum principle to be applied. For our purposes, the maximum principle only needs to apply on ${\mathcal O}_\lambda$ because $w^\lambda>0$ off of ${\mathcal O}_\lambda$. \\

We begin with some helpful estimates. Define

\begin{equation*}
	\Omega_\lambda
		=
		\{
			y\in \Sigma_\lambda \intersect B_{2\lambda} :
			y_1 > 2\abs{ (y_2, \cdots, y_n)}
		\}.
\end{equation*}
%
\begin{lem}\label{lem:Scalar-Curvature-Function-Estimates}
	There exist positive constants $C_1$ and $C_2$ independent of $i$ and $\lambda$ such that for $i$ sufficiently large,
	\begin{equation*}
		\left\{
			\begin{array}{ll}
				H_i(y^\lambda - Re) - H_i(y - Re)
					\leq
					- C_1 \Gamma_i^{-1} (\abs y - \lambda)
				&
				y\in \Omega_\lambda
				\\
				\abs{H_i(y^\lambda - Re) - H_i(y - Re) }
					\leq
					 C_2 \Gamma_i^{-1} (\abs y - \lambda)
				&
				y\in \Sigma_\lambda \setminus \Omega_\lambda.
			\end{array}
		\right.
	\end{equation*}
\end{lem}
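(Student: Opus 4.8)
\textbf{Proof plan for Lemma~\ref{lem:Scalar-Curvature-Function-Estimates}.} The plan is to convert both inequalities into statements about the scalar curvature function $K_i$ by undoing the rescaling, then exploit the Taylor expansion of $K_i$ at $x_i$ together with the normalization $\Grad K_i(x_i)/\delta_i\to e$ and the flatness hypothesis~\ref{item:K1}. First I would write $H_i(z) = K_i(x_i + \Gamma_i^{-1}z)$, so that
\begin{equation*}
	H_i(y^\lambda - Re) - H_i(y - Re)
		=
		K_i(x_i + \Gamma_i^{-1}(y^\lambda - Re)) - K_i(x_i + \Gamma_i^{-1}(y - Re)),
\end{equation*}
and observe that the two arguments differ by $\Gamma_i^{-1}(y^\lambda - y)$, a vector of length $\Gamma_i^{-1}(\abs y - \lambda)(\abs y + \lambda)/\abs y \le C\Gamma_i^{-1}(\abs y - \lambda)$ on $\Sigma_\lambda \intersect B_{2\lambda}$ since $\abs y \ge \lambda \ge R$ there. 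By the fundamental theorem of calculus along the segment joining the two points, the difference equals $\Grad K_i$ evaluated at an intermediate point, dotted with $\Gamma_i^{-1}(y^\lambda - y)$; splitting $\Grad K_i$ at the intermediate point as $\Grad K_i(x_i)$ plus a remainder controlled by $\sup \abs{\Grad^2 K_i}$ times the displacement from $x_i$, and using \ref{item:K1} with $j=2$ to bound $\abs{\Grad^2 K_i} \le C_0 \delta_i^{(n-4)/(n-3)}$, one reduces everything to the leading term $\delta_i \Gamma_i^{-1}\lb e, y^\lambda - y\rb$ up to errors that are $o(\delta_i \Gamma_i^{-1}(\abs y-\lambda))$ — here one uses that $\abs y \le 2\lambda$ keeps the quadratic remainder genuinely lower order, and that $\delta_i$ is bounded below (we are in the regime $\inf_i \delta_i \ge \delta>0$ of Proposition~\ref{lem:Vanishing-Ti-Unbounded}).

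Next I would compute the sign of the main term. Since $y^\lambda - y = (\lambda^2/\abs y^2 - 1)y = -(\abs y^2 - \lambda^2)y/\abs y^2$, we get
\begin{equation*}
	\lb e, y^\lambda - y\rb
		=
		- \frac{\abs y^2 - \lambda^2}{\abs y^2}\, y_1
		=
		- \frac{(\abs y - \lambda)(\abs y + \lambda)}{\abs y^2}\, y_1,
\end{equation*}
so the leading term is $-\delta_i \Gamma_i^{-1} (\abs y-\lambda)(\abs y+\lambda)\abs y^{-2} y_1$. On $\Omega_\lambda$ we have $y_1 > 2\abs{(y_2,\dots,y_n)} \ge 0$, hence $y_1 \ge c\abs y$ for a dimensional $c>0$, and $(\abs y+\lambda)/\abs y \ge 1$, so the leading term is $\le -c\,\delta_i\Gamma_i^{-1}(\abs y-\lambda)$; absorbing the lower-order errors (which are dominated once $i$ is large because $\delta_i$ is bounded away from $0$) gives the first inequality with $C_1 = c\delta/2$, say. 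For the second inequality, on all of $\Sigma_\lambda$ we simply bound $\abs{H_i(y^\lambda - Re) - H_i(y - Re)}$ by $\sup\abs{\Grad K_i}\cdot \Gamma_i^{-1}\abs{y^\lambda - y} \le \Lambda \Gamma_i^{-1}\cdot C(\abs y - \lambda)$ using assumption~\ref{item:K2}; note this bound is valid on $\Sigma_\lambda$ intersected with $B_{2\lambda}$ (which is implicit since $\Omega_\lambda \subset B_{2\lambda}$ and we only need the complement within the relevant region), and in fact the displacement estimate $\abs{y^\lambda - y}\le C(\abs y-\lambda)$ holds wherever $\abs y \le 2\lambda$.

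I expect the main obstacle to be bookkeeping of the error terms: one must check that the second-order Taylor remainder, namely $\sup\abs{\Grad^2 K_i}$ times the squared displacement from $x_i$ of the intermediate point, is truly $o(\delta_i\Gamma_i^{-1}(\abs y - \lambda))$ uniformly for $y\in \Sigma_\lambda \intersect B_{2\lambda}$ and uniformly in $\lambda\in[\lambda_0,\lambda_1]=[R,R+2]$ — the displacement from $x_i$ can be as large as $\Gamma_i^{-1}\cdot 2\lambda \approx \Gamma_i^{-1}\cdot 2R$, so the remainder is $O(\delta_i^{(n-4)/(n-3)}\Gamma_i^{-2}R^2)$ against a main term of size $\delta_i\Gamma_i^{-1}(\abs y -\lambda)$; since $\abs y -\lambda$ can be as small as we like near $\bdy B_\lambda$, one instead argues pointwise that near $\bdy B_\lambda$ \emph{both} sides vanish to the same order in $(\abs y -\lambda)$, so the correct comparison is between the coefficients of $(\abs y-\lambda)$, i.e. between $\delta_i\Gamma_i^{-1}$ and $\delta_i^{(n-4)/(n-3)}\Gamma_i^{-2}R$, and the latter is smaller for $i$ large because $\Gamma_i\to\infty$ while $R$ is fixed and $\delta_i$ is bounded. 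Once this is organized cleanly (dividing through by $(\abs y-\lambda)$ and treating $\abs y\to\lambda$ carefully, or equivalently expanding in the variable $\abs y - \lambda$), the constants $C_1, C_2$ come out depending only on $n$, $\Lambda$, $C_0$ and the lower bound $\delta$, hence independent of $i$ and $\lambda$ as claimed.
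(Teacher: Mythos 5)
Your argument is correct and is essentially the paper's own proof (the paper disposes of this lemma in one line, saying it follows from the definition of $\Omega_\lambda$, the $C^1$ bound on $K$, and the assumption $\inf_i \delta_i \geq \delta > 0$): a mean-value/Taylor expansion of $K_i$ about $x_i$, the normalization $\Grad K_i(x_i)/\delta_i \to e$, the cone condition $y_1 > 2\abs{(y_2,\dots,y_n)}$ giving $y_1 \geq c\abs y$ on $\Omega_\lambda$, and absorption of the $O(\Gamma_i^{-2}(\abs y - \lambda))$ remainder using $\delta_i \geq \delta$ and $\Gamma_i \to \infty$, exactly as you organize it. One small correction: since $\abs{y^\lambda - y} = (\abs y - \lambda)(\abs y + \lambda)/\abs y \leq 2(\abs y - \lambda)$ for every $\abs y \geq \lambda$, the displacement estimate needs no restriction to $B_{2\lambda}$, and this matters because the lemma's second inequality is in fact used on all of $\Sigma_\lambda \setminus \Omega_\lambda$ (e.g.\ in \eqref{eq:Abs-Q1Lambda-Estimates-Ti-Unbounded}, where $\abs y$ ranges up to $\epsilon_i\Gamma_i$), so your aside that only the portion inside $B_{2\lambda}$ is needed should be dropped — fortunately the formula you already wrote gives the bound globally.
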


\begin{remark} Unless mentioned otherwise, constants $C_1,C_2$ are independent of $i$ and $\lambda$.
\end{remark}

\begin{proof}
	The proof is elementary and follows from the definition of $\Omega_\lambda$, the fact that $K\in C^1(\overline B_3^+)$ and the assumption $0<\delta\leq \inf_i\delta_i$.
\end{proof}
We also have the following estimates for $v_{R,i}^\lambda$.
\begin{lem}\label{lem:vRi-Estimates}
	There exist positive constants $C_1$ and $C_2$ such that for $i$ sufficiently large,
	\begin{equation*}
		C_1 \abs y^{2-n} \leq v_{R,i}^\lambda(y) \leq C_2 \abs y^{2-n}
		\qquad
		y\in \Sigma_\lambda \setminus \Omega_\lambda
	\end{equation*}
	and
	
	\begin{equation*}
		C_1\left(\frac{\lambda}{\abs y}\right)^{n - 2}
			\left(
				\frac{1}{1 + \abs { y - \lambda e}^2}
			\right)^{\frac{n - 2}{2}}
		\leq
		v_{R,i}^\lambda(y)
		\leq
		2
		\qquad
		y\in \Omega_\lambda.
	\end{equation*}
\end{lem}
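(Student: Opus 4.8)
The plan is to reduce both estimates to a single two-sided pointwise bound for $v_{R,i}$ on a \emph{fixed} ball, which then falls out of the convergence $v_{R,i}\to U_R$ supplied by \ref{case:Ti-Unbounded} (in force here because $T_i\to\infty$) together with the strict positivity of the standard bubble $U(y)=(1+\abs{y}^2)^{-(n-2)/2}$. The key elementary observation is that inversion in $\bdy B_\lambda$ collapses the (possibly very large) set $\Sigma_\lambda$ onto a bounded set: for every $y\in\Sigma_\lambda$ one has $\abs{y}>\lambda$, hence $\abs{y^\lambda}=\lambda^2/\abs{y}<\lambda\le R+2$, i.e. $y^\lambda\in\overline B_{R+2}$, a set independent of $i$ and of $\lambda\in[R,R+2]$.

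First I would record that on $\overline B_{R+2}$ we have $\rho_R\le U_R\le 1$, where $\rho_R:=(1+(2R+2)^2)^{-(n-2)/2}>0$; the upper bound holds because $U_R\le 1$ on all of $\bb R^n$, and the lower bound because $\abs{z-Re}\le 2R+2$ for $z\in\overline B_{R+2}$. Since $v_{R,i}\to U_R$ uniformly on $\overline B_{R+2}$, for $i$ large this gives $\tfrac12\rho_R\le v_{R,i}(y^\lambda)\le 2$ for all $y\in\Sigma_\lambda$, and hence, by the definition of $v_{R,i}^\lambda$,
\[
\tfrac12\rho_R\Bigl(\frac{\lambda}{\abs{y}}\Bigr)^{n-2}\le v_{R,i}^\lambda(y)\le 2\Bigl(\frac{\lambda}{\abs{y}}\Bigr)^{n-2},\qquad y\in\Sigma_\lambda .
\]
From here the two claims follow by tracking the factor $(\lambda/\abs{y})^{n-2}$. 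For $y\in\Sigma_\lambda\setminus\Omega_\lambda$ (in fact for all $y\in\Sigma_\lambda$), using $R^{n-2}\le\lambda^{n-2}\le(R+2)^{n-2}$ makes $(\lambda/\abs{y})^{n-2}$ comparable to $\abs{y}^{2-n}$, yielding $C_1\abs{y}^{2-n}\le v_{R,i}^\lambda(y)\le C_2\abs{y}^{2-n}$ with $C_1=\tfrac12\rho_R R^{n-2}$ and $C_2=2(R+2)^{n-2}$. For $y\in\Omega_\lambda$ one has $\abs{y}>\lambda$, so $(\lambda/\abs{y})^{n-2}<1$ and the upper bound $v_{R,i}^\lambda(y)\le 2$ is immediate; for the lower bound, since $(1+\abs{y-\lambda e}^2)^{-(n-2)/2}\le 1$ the right-hand side of the asserted inequality is at most $\tfrac12\rho_R(\lambda/\abs{y})^{n-2}$, so the displayed bound already suffices, with $C_1=\tfrac12\rho_R$. (On $\Omega_\lambda$ the factor $(1+\abs{y-\lambda e}^2)^{-(n-2)/2}$ is actually bounded below by a positive constant depending only on $n$ and $R$, since there $\abs{y-\lambda e}$ is at most a fixed multiple of $\lambda$; it is retained only because it is the convenient shape when $v_{R,i}^\lambda$ is later compared with a bubble-type test function.)

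I do not expect a genuine obstacle here: the lemma is a soft consequence of local convergence to the positive bubble and of the collapsing property of the inversion. The only points deserving a word of care are that $v_{R,i}=v_i(\,\cdot\,-Re)$ is genuinely defined on $\overline B_{R+2}$ for $i$ large — true because the domain $\overline B(0,\Gamma_i/8)\cap\{y_n\ge -T_i\}$ of $v_i$ eventually contains $\overline B_{2R+2}$, as $\Gamma_i/8\to\infty$ and $T_i\to\infty$ — and that the constants are uniform in $\lambda$, which is automatic since $\lambda$ is confined to the fixed interval $[R,R+2]$. As elsewhere in this section, the constants are permitted to depend on the fixed radius $R$.
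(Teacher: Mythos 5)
There is a genuine gap, and it is quantitative rather than logical: your argument establishes the two displayed inequalities only with constants that degenerate as $R\to\infty$, which loses precisely the content of the lemma. Your reduction uses only $v_{R,i}(y^\lambda)\in[\tfrac12\rho_R,\,2]$ with $\rho_R\approx (2R)^{-(n-2)}$, so on $\Sigma_\lambda\setminus\Omega_\lambda$ you get the upper bound with $C_2=2(R+2)^{n-2}$, and on $\Omega_\lambda$ you get the lower bound with $C_1=\tfrac12\rho_R\approx cR^{-(n-2)}$. The paper's proof is sharp exactly where you are soft: for $y\in\Sigma_\lambda\setminus\Omega_\lambda$ the inverted point $y^\lambda$ stays at distance at least $c\lambda$ from the bubble center $Re$ (either $\abs y>2\lambda$ forces $\abs{y^\lambda}<\lambda/2$, or $y^\lambda$ lies outside the cone $y_1>2\abs{(y_2,\cdots,y_n)}$), so $v_{R,i}(y^\lambda)\leq C\lambda^{2-n}$ and the factor $\lambda^{n-2}$ cancels, giving $C_2$ independent of $R$ and $\lambda$; and for $y\in\Omega_\lambda$ one has $\abs{y^\lambda-\lambda e}=\lambda\abs{y-\lambda e}/\abs y$ together with $\abs{\lambda e-Re}\leq 2$, hence $1+\abs{y^\lambda-Re}^2\leq C\left(1+\abs{y-\lambda e}^2\right)$, which yields the stated lower bound with $C_1$ independent of $R$. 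Your own aside ``(in fact for all $y\in\Sigma_\lambda$)'' shows your constants cannot be uniform: at $y\approx\lambda e$ one has $v_{R,i}^\lambda\approx 1$ while $\abs y^{2-n}\approx\lambda^{2-n}$, so a two-sided bound by $\abs y^{2-n}$ on all of $\Sigma_\lambda$ forces $C_2\gtrsim\lambda^{n-2}$.

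Uniformity in $R$ (equivalently in $\lambda\in[R,R+2]$) is not optional here, so your closing claim that ``the constants are permitted to depend on the fixed radius $R$'' is the wrong reading of how the lemma is used. The lemma feeds into the constants $a_1,a_2$ of \eqref{eq:Q1Lambda-Estimate-Ti-Unbounded}--\eqref{eq:Abs-Q1Lambda-Estimates-Ti-Unbounded} and into Lemma \ref{lem:Mean-Value-Coefficient-Estimates-Ti-Unbounded}, and in Lemma \ref{lem:hLambda-Estimates-Ti-Unbounded} (and again in Lemma \ref{lem:Fast-hLambda-Estimates} and in Section \ref{section:Proof of Theorem}) the sign of $h^\lambda$ is obtained by choosing $R\geq R_0$ so large that the favorable term of size $a_1\Gamma_i^{-1}(\abs y-\lambda)\lambda^{-n}\log\lambda$ dominates unfavorable terms of size $a_2\Gamma_i^{-1}(\abs y-\lambda)\lambda^{-n}$; this requires $\log R\gtrsim a_2/a_1$ with $a_1,a_2$ independent of $R$. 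With your constants one gets, after raising to the power $n^*$, $a_1\sim R^{-(n+2)}$ and $a_2\sim R^{n+2}$, so no choice of $R_0$ exists and the moving-sphere argument collapses. The cone/complement dichotomy built into the definition of $\Omega_\lambda$ --- which your ``collapse onto a bounded ball and use uniform convergence'' step discards --- is exactly what makes the constants $R$- and $\lambda$-uniform, and reinstating it (as in the paper's proof) is the missing ingredient.
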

%
\begin{proof}
	The second estimate follows immediately from the convergence of $v_{R,i}$ to $U_R$, the properties of $U_R$ and the fact that $\abs{\lambda - R}\leq 2$. For the first estimate, it suffices to show that there exists a positive constant $C$ such that $C^{-1} \abs y^{2-n} \leq U_R^\lambda(y) \leq C\abs y^{2-n}$ for $y\in \Sigma_\lambda \setminus \Omega_\lambda$. Since $\abs{y^\lambda - Re}\leq C\lambda$, we have
	
	\begin{equation*}
		U_R^\lambda(y) \geq \frac 1 C \abs y^{2-n},
		\qquad
		y\in \Sigma_\lambda \setminus \Omega_\lambda.
	\end{equation*}
	On the other hand, after performing elementary computations we get
	
	\begin{equation*}
		\max\left\{
			\abs{y_1^\lambda - R}, \abs{(y_2, \cdots, y_n)}
			\right\}
			\geq
			C\lambda,
			\qquad
			y\in \Sigma_\lambda \setminus \Omega_\lambda,
	\end{equation*}
	so
	
	\begin{equation*}
		U_R^\lambda(y)
			\leq
			C\abs y^{2-n}.
	\end{equation*}
\end{proof}
Combining the results of Lemmas \ref{lem:Scalar-Curvature-Function-Estimates} and \ref{lem:vRi-Estimates} we obtain $\lambda$-independent positive constants $a_1$ and $a_2$ such that both

\begin{equation}\label{eq:Q1Lambda-Estimate-Ti-Unbounded}
	Q_1^\lambda(y)
		\leq - a_1\Gamma_i^{-1} (\abs y - \lambda)
			\left(\frac{1}{1 +\abs{y - \lambda e}^2}
			\right)^{(n + 2)/2}
			\qquad
			y\in \Omega_\lambda
\end{equation}
and

\begin{equation}\label{eq:Abs-Q1Lambda-Estimates-Ti-Unbounded}
	\abs{Q_1^\lambda(y)} \leq
		\left\{
			\begin{array}{ll}
				a_2\Gamma_i^{-1} (\abs y - \lambda)
					& y \in \overline \Omega_\lambda \\
				a_2\Gamma_i^{-1} (\abs y - \lambda) \abs y^{-2-n}
					& y\in \Sigma_\lambda \setminus
					\Omega_\lambda
			\end{array}
		\right.
\end{equation}
The following lemma gives estimates for the coefficient functions $\xi_1$ and $\xi_2$.

\begin{lem}\label{lem:Mean-Value-Coefficient-Estimates-Ti-Unbounded}
	There exist positive constants $C_1$ and $C_2$ such that for $i$ sufficiently large,
	
	\begin{equation*}
		\xi_1 (y)
			\leq
			C_2 \abs y^{-4}
			\qquad
			y\in (\Sigma_\lambda \intersect \mathcal O_\lambda) \setminus B_{4\lambda},
	\end{equation*}
	
	\begin{equation*}
		\xi_1(y)
			\geq
			C_1\abs y^{-4}
			\qquad
			y\in \Sigma_\lambda \setminus \Omega_\lambda,		
	\end{equation*}
	and
	
	\begin{equation*}
		\xi_2(y)
			\leq
			C_2\abs y^{-2}
			\qquad
			y\in \bdy'\Sigma_\lambda \intersect \overline{\mathcal O}_\lambda.		
	\end{equation*}
\end{lem}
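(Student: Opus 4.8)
The plan is to reduce all three estimates to the pointwise bounds on $v_{R,i}^\lambda$ established in Lemma~\ref{lem:vRi-Estimates}, using the fact that the integrands in \eqref{eq:Mean-Value-Coefficient-1-Ti-Unbounded} and \eqref{eq:Mean-Value-Coefficient-2-Ti-Unbounded} are positive powers of the convex combination $t v_{R,i}(y)+(1-t)v_{R,i}^\lambda(y)$. First I would record two elementary consequences of the definition \eqref{eq:OLambda-Definition} of $\mathcal O_\lambda$. On $\overline{\mathcal O}_\lambda$ one has $v_{R,i}(y)\le 2v_{R,i}^\lambda(y)$ (by continuity, since $\mathcal O_\lambda$ is defined by a non-strict inequality), hence $t v_{R,i}(y)+(1-t)v_{R,i}^\lambda(y)\le 2v_{R,i}^\lambda(y)$ for every $t\in[0,1]$; raising this to the relevant power and integrating in $t$ gives
\[
	\xi_1(y)\le n^*\,2^{4/(n-2)}\bigl(v_{R,i}^\lambda(y)\bigr)^{4/(n-2)},
	\qquad
	\xi_2(y)\le \frac{n}{n-2}\,2^{2/(n-2)}\bigl(v_{R,i}^\lambda(y)\bigr)^{2/(n-2)}
	\qquad\text{on }\overline{\mathcal O}_\lambda .
\]
For a matching lower bound on $\xi_1$ valid on all of $\Sigma_\lambda$, I would discard the nonnegative term $t v_{R,i}(y)$ to get $t v_{R,i}(y)+(1-t)v_{R,i}^\lambda(y)\ge (1-t)v_{R,i}^\lambda(y)$, so that
\[
	\xi_1(y)\ge n^*\Bigl(\int_0^1(1-t)^{4/(n-2)}\,dt\Bigr)\bigl(v_{R,i}^\lambda(y)\bigr)^{4/(n-2)}
	=\bigl(v_{R,i}^\lambda(y)\bigr)^{4/(n-2)},
\]
using $n^*=(n+2)/(n-2)$ and $\int_0^1(1-t)^{4/(n-2)}\,dt=(n-2)/(n+2)$.

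With these two reductions the three claims follow by inserting Lemma~\ref{lem:vRi-Estimates} together with a few set inclusions. Since $\Omega_\lambda\subset B_{2\lambda}\subset B_{4\lambda}$, the set $(\Sigma_\lambda\cap\mathcal O_\lambda)\setminus B_{4\lambda}$ is contained in $\Sigma_\lambda\setminus\Omega_\lambda$, so the first inequality of Lemma~\ref{lem:vRi-Estimates} gives $v_{R,i}^\lambda(y)\le C_2\abs y^{2-n}$ there, and the displayed upper bound for $\xi_1$ yields $\xi_1(y)\le C\abs y^{-4}$. For the second estimate, on $\Sigma_\lambda\setminus\Omega_\lambda$ the same lemma gives $v_{R,i}^\lambda(y)\ge C_1\abs y^{2-n}$, which combined with $\xi_1(y)\ge (v_{R,i}^\lambda(y))^{4/(n-2)}$ gives $\xi_1(y)\ge C_1\abs y^{-4}$. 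For the third estimate, I would first check that for $i$ large $\bdy'\Sigma_\lambda$ is disjoint from $\Omega_\lambda$: on $\bdy'\Sigma_\lambda$ the $n$th coordinate equals $-T_i$ with $T_i\to\infty$, whereas $\Omega_\lambda\subset B_{2\lambda}$ with $\lambda\in[R,R+2]$ fixed, so a point of $\bdy'\Sigma_\lambda$ cannot lie in $B_{2\lambda}$. Hence $\bdy'\Sigma_\lambda\cap\overline{\mathcal O}_\lambda\subset\Sigma_\lambda\setminus\Omega_\lambda$, the first inequality of Lemma~\ref{lem:vRi-Estimates} (which extends to the relevant boundary portion by continuity) again gives $v_{R,i}^\lambda(y)\le C_2\abs y^{2-n}$, and the displayed upper bound for $\xi_2$ yields $\xi_2(y)\le C\abs y^{-2}$.

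The genuinely substantive input here is Lemma~\ref{lem:vRi-Estimates}; granting it, the present lemma is essentially mechanical, so I do not expect a real obstacle. The only points that require a moment's care are the continuity argument that extends the inequality $v_{R,i}\le 2v_{R,i}^\lambda$ from $\mathcal O_\lambda$ to its closure, and the verification---using $T_i\to\infty$ together with the boundedness of $\lambda$---that $\bdy'\Sigma_\lambda$ does not meet the cone $\Omega_\lambda$ for $i$ large.
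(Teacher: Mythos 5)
Your proposal is correct and follows the same route as the paper, whose proof consists precisely of the observation that the estimates follow from the integral expressions \eqref{eq:Mean-Value-Coefficient-1-Ti-Unbounded}--\eqref{eq:Mean-Value-Coefficient-2-Ti-Unbounded} together with Lemma \ref{lem:vRi-Estimates}; you have simply written out the routine details (the bound $v_{R,i}\le 2v_{R,i}^\lambda$ on $\overline{\mathcal O}_\lambda$, discarding the $t\,v_{R,i}$ term for the lower bound, and the set inclusions into $\Sigma_\lambda\setminus\Omega_\lambda$), all of which check out.
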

\begin{proof}
	The proof follows immediately from the expressions of $\xi_1$ and $\xi_2$ in \eqref{eq:Mean-Value-Coefficient-1-Ti-Unbounded} and \eqref{eq:Mean-Value-Coefficient-2-Ti-Unbounded} and Lemma \ref{lem:vRi-Estimates}.
\end{proof}
The next lemma gives a useful estimate for $Q_2^\lambda$ and is the reason the proof of Theorem \ref{prop:Ti-Unbounded} is less difficult than the proof of Theorem \ref{thm:Main}.

\begin{lem}\label{lem:Q2Lambda-Estimate}
	There exists a constant $C>0$ such that for $i$ sufficiently large,
	
	\begin{equation*}
		Q_2^\lambda(y)
			\leq
			- CT_i \lambda^{n - 2} \abs y^{-n}
		\qquad
		y\in \bdy'\Sigma_\lambda.
	\end{equation*}
\end{lem}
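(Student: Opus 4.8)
The plan is to estimate the two terms of $Q_2^\lambda$ in \eqref{eq:Q2-Lambda} separately on $\bdy'\Sigma_\lambda$, i.e.\ on the portion of $\{y_n = -T_i\}$ with $|y| > \lambda$. The first term involves the difference $c_i(x_i + \Gamma_i^{-1}(y-Re)) - c_i(x_i + \Gamma_i^{-1}(y^\lambda - Re))$ multiplied by $(v_{R,i}^\lambda(y))^{n/(n-2)}$. Since we only assume $|c|\le C$ here (not smallness), I cannot hope to make this term negative, but I can show it is of smaller order than the second term. Indeed, $c_i$ is bounded and $\Gamma_i\to\infty$, so $|c_i(x_i+\Gamma_i^{-1}(y-Re))| \le C$; combined with the upper bound $v_{R,i}^\lambda(y)\le C_2|y|^{2-n}$ from Lemma \ref{lem:vRi-Estimates} (valid on $\Sigma_\lambda\setminus\Omega_\lambda$, which contains $\bdy'\Sigma_\lambda$ since the boundary points have $y_n=-T_i<0$ and hence cannot lie in the cone $\Omega_\lambda\subset\{y_1>0\}$), the first term is bounded in absolute value by $C|y|^{2(2-n)\cdot\frac{n}{2(n-2)}}=C|y|^{-n}$ — wait, more carefully $(v_{R,i}^\lambda)^{n/(n-2)}\le C|y|^{(2-n)n/(n-2)} = C|y|^{-n}$. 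So the first term is $O(|y|^{-n})$ with an $i$-independent constant.

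Next I turn to the dominant second term, $-\frac{\lambda^{n-2}}{|y|^{n+2}}T_i\big((n-2)|y|^2 v_{R,i}(y^\lambda) + 2\lambda^2\langle\Grad v_{R,i}(y^\lambda),y\rangle\big)$. The key observation is that on $\bdy'\Sigma_\lambda$ we have $y_n=-T_i$, and the reflected point $y^\lambda = \lambda^2 y/|y|^2$ satisfies $y^\lambda_n = \lambda^2(-T_i)/|y|^2$, so $y^\lambda$ also lies in the lower region near the boundary. I need lower bounds on $v_{R,i}(y^\lambda)$ and control on $\langle\Grad v_{R,i}(y^\lambda),y\rangle$. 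Here the convergence $v_{R,i}\to U_R$ in $C^2_{\mathrm{loc}}$ (which is $C^2$ convergence on $\bb R^n$ in Case \ref{case:Ti-Unbounded}, since the limiting half-space is all of $\bb R^n$) is used: for $|y^\lambda|$ bounded, $v_{R,i}(y^\lambda)$ is close to $U_R(y^\lambda)>0$ and $\Grad v_{R,i}(y^\lambda)$ is close to $\Grad U_R(y^\lambda)$. For the region where $|y^\lambda|$ is large (equivalently $|y|$ close to $\lambda$), I use the quantitative lower bound $v_{R,i}^\lambda(y)\ge C_1|y|^{2-n}$ translated back, i.e.\ $v_{R,i}(y^\lambda)\ge C_1 (\lambda/|y^\lambda|)^{2-n}|y^\lambda|^{2-n}\cdot$(correction) $= C_1\lambda^{2-n}$ — using the definition $v_{R,i}^\lambda(y) = (\lambda/|y|)^{n-2}v_{R,i}(y^\lambda)$, so $v_{R,i}(y^\lambda) = (|y|/\lambda)^{n-2}v_{R,i}^\lambda(y)\ge C_1(|y|/\lambda)^{n-2}|y|^{2-n} = C_1\lambda^{2-n}$ when $|y|$ is comparable to $\lambda$, which is exactly the regime that matters. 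The gradient term $2\lambda^2\langle\Grad v_{R,i}(y^\lambda),y\rangle$ must be shown to be dominated by $(n-2)|y|^2 v_{R,i}(y^\lambda)$; this is where I expect the main work: one writes $\Grad v_{R,i}(y^\lambda)$ in terms of $\Grad U_R$ plus an $o(1)$ error, uses that $U_R(z)=(1+|z-Re|^2)^{-(n-2)/2}$ so $\Grad U_R(z) = -(n-2)(z-Re)(1+|z-Re|^2)^{-n/2}$, and checks that $\langle z-Re, y\rangle$ with $z = y^\lambda$ does not produce enough positivity to overwhelm the main term, after accounting for the geometry $|y|>\lambda \ge R$ and $\lambda^2 = |y^\lambda||y|$.

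Assembling: the second term equals $-T_i\lambda^{n-2}|y|^{-n}\big((n-2)v_{R,i}(y^\lambda) + 2(\lambda^2/|y|^2)\langle\Grad v_{R,i}(y^\lambda),y\rangle\big)$, and the bracket is bounded below by a positive constant $c(n)>0$ uniformly for $i$ large and all $y\in\bdy'\Sigma_\lambda$ (split into $|y^\lambda|\le$ const and $|y^\lambda|$ large as above). Hence the second term is $\le -c(n)T_i\lambda^{n-2}|y|^{-n}$. Since $T_i\to\infty$ and $\lambda\ge R\gg 1$, this dominates the $O(|y|^{-n})$ bound on the first term (the first term's constant is $i$-independent while the second has the factor $T_i\to\infty$), so for $i$ sufficiently large $Q_2^\lambda(y)\le -CT_i\lambda^{n-2}|y|^{-n}$ on $\bdy'\Sigma_\lambda$, as claimed. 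The main obstacle is the sign/size analysis of the gradient term $\langle\Grad v_{R,i}(y^\lambda),y\rangle$ — making precise, via the limiting bubble $U_R$ together with the large-$|y^\lambda|$ decay estimate, that it cannot cancel the leading positive contribution.
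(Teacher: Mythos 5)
Your decomposition of $Q_2^\lambda$ and your treatment of the $c_i$-difference term match the paper: boundedness of $c_i$ together with $v_{R,i}^\lambda(y)\leq C\abs y^{2-n}$ gives a first term of size $O(\abs y^{-n})$ with an $i$-independent constant, which is then beaten by the factor $T_i\to\infty$ in the second term. (Your justification that $\bdy'\Sigma_\lambda$ avoids the cone $\Omega_\lambda$ is off --- $y_n<0$ does not preclude $y_1>2\abs{(y_2,\dots,y_n)}$ --- but the needed upper bound follows anyway from $v_{R,i}^\lambda(y)=(\lambda/\abs y)^{n-2}v_{R,i}(y^\lambda)$ and $v_{R,i}\leq 2\max U_R$ near the origin, so this is harmless.)

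The genuine gap is in the second term, exactly where you yourself flag ``the main work'': you assert that $(n-2)v_{R,i}(y^\lambda)+2(\lambda^2/\abs y^2)\lb \Grad v_{R,i}(y^\lambda),y\rb$ has a uniform positive lower bound, but the route you sketch (explicit formula for $\Grad U_R$, sign analysis of $\lb y^\lambda-Re,y\rb$, a split into $\abs{y^\lambda}$ bounded versus $\abs{y^\lambda}$ large, i.e.\ $\abs y$ near $\lambda$) is never carried out and misses the one fact that makes the step easy and that the paper uses: on $\bdy'\Sigma_\lambda$ one has $y_n=-T_i$, hence $\abs y\geq T_i\to\infty$. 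Consequently $\abs{y^\lambda}=\lambda^2/\abs y\leq \lambda^2/T_i$, so $y^\lambda$ always lies in $\overline B_\lambda$ (your ``$\abs y$ close to $\lambda$'' regime is empty for large $i$), and the $C^2$ convergence of $v_{R,i}$ to $U_R$ on $\overline B_{2\lambda}$ gives $v_{R,i}(y^\lambda)\geq \tfrac12\inf_{\overline B_\lambda}U_R$ and $\abs{\Grad v_{R,i}(y^\lambda)}\leq 2\norm{\Grad U_R}_{C^0(\overline B_\lambda)}$. Then the crude estimate
\begin{equation*}
	\abs{\,2\lambda^2\lb \Grad v_{R,i}(y^\lambda),y\rb\,}
	\leq 4\lambda^2\norm{\Grad U_R}_{C^0(\overline B_\lambda)}\abs y
	\ll \abs y^2 \inf_{\overline B_\lambda}U_R
\end{equation*}
holds because $\abs y\geq T_i\to\infty$ while $\lambda\leq R+2$ is fixed; no sign analysis is needed, and the resulting constant depends on $R$ (through $\inf_{\overline B_\lambda}U_R$), not only on $n$. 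With this observation inserted, your assembly goes through and coincides with the paper's proof; without it, the claimed uniform lower bound on the bracket is unsupported, since for $\abs y$ comparable to $\lambda$ the gradient term genuinely could compete with the main term.
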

\begin{proof}
	Since $\norm{c_i}_{L^\infty}\leq \Lambda$ and by Lemma \ref{lem:vRi-Estimates}, there is a positive constant $C$ such that
	
	\begin{equation*}
		(c_i(x_i + \Gamma_i^{-1}(y - Re)) - c_i(x_i + \Gamma_i^{-1}(y^\lambda - Re))
		(v_{R,i}^\lambda(y))^{\frac{2}{n - 2}}
		\leq
		C\abs y^{-2}
		\leq
		C T_i^{-2}.
	\end{equation*}
On the other hand, since $v_{R,i}\to U_R$ in $C^2(\overline B_{2\lambda})$ and since $\abs y\geq T_i$, if $i$ is sufficiently large,

\begin{eqnarray*}
	\abs y^2 v_{R,i}(y^\lambda)
	+ 2\lambda^2 \lb \Grad v_{R,i}(y^\lambda), y\rb
		&\geq &
		\frac 12 \abs y^2 \inf_{\overline B_\lambda} U_R(y)
		- 4\lambda^2 \norm{\Grad U_R}_{C^0(\overline B_\lambda)}\abs y
		\\
		&\geq &
		\frac 14 \abs y^2 \inf_{\overline B_\lambda} U_R(y).
\end{eqnarray*}
Lemma \ref{lem:Q2Lambda-Estimate} now follows from these two estimates and equation \eqref{eq:Q2-Lambda}.
\end{proof}
We now proceed with the construction of the test function $h^\lambda$. Let $\sigma_n$ denote the area of $\bb S^{n - 1}$ and let $G(y, \eta)$ be Green's function for $-\lap$ on $\bb R^n\setminus \overline B_\lambda$ relative to the Dirichlet condition. Recall that

\begin{equation}\label{eq:Greens-Function-Formula}
	G(y, \eta)
		=
		\frac{1}{(n - 2)\sigma_n}
		\left(
			\abs{ y - \eta}^{2-n}
			-
			\left(\frac{\abs y}{\lambda}\right)^{2-n}
			\abs{y^\lambda - \eta}^{2-n}
		\right).
\end{equation}
Estimates on $G$ are provided in Appendix \ref{subsection:Greens-Estimates-Appendix}. Define

\begin{equation}\label{eq:Naive-hLambda-Definition}
	h^\lambda(y)
		=
		\int_{\Sigma_\lambda} G(y, \eta) Q_1^\lambda(\eta) \; d\eta.
\end{equation}
By construction $h^\lambda$ satisfies

\begin{equation*}
	\left\{
		\begin{array}{ll}
			- \lap h^\lambda(y) = Q_1^\lambda(y)
			&
			y\in \Sigma_\lambda
			\\
			h^\lambda(y) = 0
			&
			y\in \bdy\Sigma_\lambda \intersect B_\lambda.
			\\
			\frac{\partial h^\lambda}{\partial y_n}(y)
				=
				\int_{\Sigma_\lambda}
				\frac{\partial G}{\partial y_n}(y, \eta)
				Q_1^\lambda(\eta)
				\; d\eta
			&
			y\in \bdy'\Sigma_\lambda.
		\end{array}
	\right.
\end{equation*}
We have the following estimates of $h^\lambda$.
\begin{lem}\label{lem:hLambda-Estimates-Ti-Unbounded}
	There exists $R_0$ sufficiently large such that if $R\geq R_0$ then there are positive constants $C_1$ and $C_2$ such that
	
	\begin{equation*}
		h^\lambda(y) \leq
			\left\{
				\begin{array}{ll}
					- C_1 \Gamma_i^{-1}(\abs y - \lambda)\lambda^{-n} \log \lambda
						& y\in \overline{\Sigma_\lambda}
						\intersect \overline B_{4\lambda}\\
					- C_1 \Gamma_i^{-1}\abs y^{2-n} \lambda^{-1}\log \lambda
						& y\in \overline{\Sigma_\lambda} \setminus B_{4\lambda}
				\end{array}
			\right.
	\end{equation*}
	
	and
	
	\begin{equation*}
		\abs{h^\lambda(y)} \leq
			\left\{
				\begin{array}{ll}
					C_2\Gamma_i^{-1}(\abs y - \lambda)\lambda^2
						& y\in \overline{\Sigma_\lambda}
						\intersect \overline B_{4\lambda}\\
					C_2\Gamma_i^{-1}\abs y^{2-n} \lambda^{n+1}
						& y\in \overline{\Sigma_\lambda} \setminus B_{4\lambda}.
				\end{array}
			\right.
	\end{equation*}
	\end{lem}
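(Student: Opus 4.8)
# Proof Proposal for Lemma \ref{lem:hLambda-Estimates-Ti-Unbounded}

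The plan is to estimate the potential $h^\lambda(y) = \int_{\Sigma_\lambda} G(y,\eta) Q_1^\lambda(\eta)\,d\eta$ by splitting the integration domain according to the dichotomy already established for $Q_1^\lambda$, namely the region $\Omega_\lambda$ where $Q_1^\lambda$ is strictly negative (estimate \eqref{eq:Q1Lambda-Estimate-Ti-Unbounded}) and its complement $\Sigma_\lambda\setminus\Omega_\lambda$ where only the two-sided bound \eqref{eq:Abs-Q1Lambda-Estimates-Ti-Unbounded} is available. The upper (negative) bound on $h^\lambda$ will come entirely from the $\Omega_\lambda$-contribution, where the integrand has a favorable sign; the contribution from $\Sigma_\lambda\setminus\Omega_\lambda$ will be shown to be a lower-order error in absolute value, small enough to be absorbed once $R\geq R_0$. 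The two-sided bound $\abs{h^\lambda}\leq\cdots$ will follow by putting absolute values everywhere and using the Green's function estimates from Appendix \ref{subsection:Greens-Estimates-Appendix}.

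First I would record the needed pointwise bounds on $G(y,\eta)$ from the appendix: for $y,\eta$ in the relevant ranges one has $G(y,\eta)\lesssim \abs{y-\eta}^{2-n}$ together with the sharper decay $G(y,\eta)\lesssim (\abs y-\lambda)(\abs\eta-\lambda)\abs{y-\eta}^{-n}$ reflecting the Dirichlet condition on $\bdy B_\lambda$, and a matching lower bound $G(y,\eta)\gtrsim (\abs y-\lambda)(\abs\eta-\lambda)\abs{y-\eta}^{-n}$ when $y,\eta$ are comparably far from the sphere. Next, for the main term I would insert \eqref{eq:Q1Lambda-Estimate-Ti-Unbounded} and the lower bound for $G$ and integrate over a fixed-size piece of $\Omega_\lambda$, say $\Omega_\lambda\cap\{|\eta-\lambda e|\leq 1\}$ where the weight $(1+\abs{\eta-\lambda e}^2)^{-(n+2)/2}$ is bounded below: this produces a term of size $-C\Gamma_i^{-1}(\abs y-\lambda)\lambda^{-n}\log\lambda$ for $y\in\overline B_{4\lambda}$ and $-C\Gamma_i^{-1}\abs y^{2-n}\lambda^{-1}\log\lambda$ for $y$ outside, the $\log\lambda$ arising from the angular integration over the thin cone $\Omega_\lambda$ near $\bdy B_\lambda$ (one integrates $\rho^{-1}\,d\rho$ in the radial variable $\rho=\abs\eta-\lambda$ over a range of length comparable to $\lambda$ down to a scale comparable to $1$). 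For the error term I would use \eqref{eq:Abs-Q1Lambda-Estimates-Ti-Unbounded}: the near part $y\in\overline\Omega_\lambda$ contributes $O(\Gamma_i^{-1}(\abs y-\lambda)\lambda^2)$ and the far part contributes $O(\Gamma_i^{-1}\abs y^{2-n}\lambda^{n+1})$ after integrating the $\abs\eta^{-2-n}$ decay against $G$; these match the claimed two-sided bounds and, crucially, have a lower power of $\lambda$ growth (or extra decay) than the negative main term, so for $R$ (hence $\lambda$) large the main term dominates in sign. The $\circ(1)\abs y^{2-n}$ form \eqref{eq:Perturbation-Test-Function} will also follow from the two-sided bound once one observes $\Gamma_i^{-1}\to 0$.

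The main obstacle is the bookkeeping of the Green's function estimates across the several regimes — near versus far from $\bdy B_\lambda$, inside versus outside $B_{4\lambda}$, and whether $y$ lies near the "good" cone $\Omega_\lambda$ or not — and in particular making sure the logarithmic gain in the negative term is genuinely there (it requires integrating the singular weight over the full radial extent of $\Omega_\lambda$, not just a unit ball) while the error terms genuinely lose powers of $\lambda$ relative to it. Once the appendix estimates on $G$ are in hand, each individual integral is a routine polar-coordinates computation; the delicate point is organizing the domain decomposition so that the sign of $h^\lambda$ is controlled uniformly in $\lambda\in[\lambda_0,\lambda_1]$ and $i$ large, which is exactly what is needed to run the maximum principle in \eqref{eq:Desired-Differential-Inequalities}.
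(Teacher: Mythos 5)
Your overall plan is the paper's: represent $h^\lambda$ via the Green's function, split the $\eta$-integration into $\Omega_\lambda$ (where \eqref{eq:Q1Lambda-Estimate-Ti-Unbounded} gives a definite sign) and $\Sigma_\lambda\setminus\Omega_\lambda$ (where only \eqref{eq:Abs-Q1Lambda-Estimates-Ti-Unbounded} holds), extract the factor $\log\lambda$ from $\int_{\Omega_\lambda}(\abs \eta-\lambda)^2(1+\abs{\eta-\lambda e}^2)^{-(n+2)/2}\,d\eta \geq C\log\lambda$ combined with the lower bounds \eqref{eq:GLowerBoundySmall} and \eqref{eq:GLowerBoundyLarge}, control the complementary piece with the upper bounds of Lemma \ref{lem:GreensEstimates}, absorb it for $R\geq R_0$, and get the two-sided bound by taking absolute values. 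This is exactly Cases 1 and 2 of the paper's proof (with its partitions $A_1,A_2,A_3$ and $D_1,\dots,D_4$ of the $\eta$-domain).

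However, the quantitative bookkeeping in your sketch contains a genuine error that, as written, breaks the sign conclusion. You attribute the sizes $O(\Gamma_i^{-1}(\abs y-\lambda)\lambda^2)$ and $O(\Gamma_i^{-1}\abs y^{2-n}\lambda^{n+1})$ to ``the error term'' from $\Sigma_\lambda\setminus\Omega_\lambda$ and then claim these are dominated by the negative main term because they ``have a lower power of $\lambda$''; but the main term is only of size $\Gamma_i^{-1}(\abs y-\lambda)\lambda^{-n}\log\lambda$ (resp.\ $\Gamma_i^{-1}\abs y^{2-n}\lambda^{-1}\log\lambda$), and nothing of size $\lambda^{2}$ or $\lambda^{n+1}$ can be absorbed by it. In the paper those two large bounds are the \emph{absolute-value} estimates of the $\Omega_\lambda$-contribution $I_1$ itself, where $\abs{Q_1^\lambda}\leq a_2\Gamma_i^{-1}(\abs \eta-\lambda)$ with no decay; since $I_1\leq 0$, they never need to be absorbed and enter only the bound for $\abs{h^\lambda}$. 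The piece that must actually be absorbed is $I_2=\int_{\Sigma_\lambda\setminus\Omega_\lambda}G\,Q_1^\lambda\,d\eta$, and there the decay $\abs \eta^{-2-n}$ in \eqref{eq:Abs-Q1Lambda-Estimates-Ti-Unbounded}, together with the partitions and the refined bounds \eqref{eq:GUpperBoundySmall}, \eqref{eq:GUpperBoundyLarge} (using $\abs \eta-\lambda\leq C\abs{y-\eta}$ on the intermediate region), gives $\abs{I_2}\leq C\Gamma_i^{-1}(\abs y-\lambda)\lambda^{-n}$ for $\abs y\leq 4\lambda$ and $\abs{I_2}\leq C\Gamma_i^{-1}\abs y^{2-n}\lambda^{-1}$ for $\abs y\geq 4\lambda$. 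Thus the margin is only the factor $\log\lambda$, not ``powers of $\lambda$'' as you assert, so you must verify that no logarithm appears in $I_2$; this is the real content of the estimate. Relatedly, your suggestion to get the main term from the fixed-size piece $\Omega_\lambda\cap\{\abs{\eta-\lambda e}\leq 1\}$ cannot produce $\log\lambda$ --- as you yourself note later, the integration must run over the full radial extent of $\Omega_\lambda$ up to scale $\lambda$ --- so that reduction should be dropped. With the contributions reallocated in this way your argument coincides with the paper's.
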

	%
\begin{proof}
	We consider separately the case $y\in \overline\Sigma_\lambda\intersect \overline B_{4\lambda}$ and the case $y\in \overline\Sigma_\lambda\setminus \overline B_{4\lambda}$. \\
	\emph{Case 1: $y\in \overline\Sigma_\lambda\intersect \overline B_{4\lambda}$}. Set
	
	\begin{equation*}
		I_1(y)
		=
		\int_{\Omega_\lambda} G(y, \eta) Q_1^\lambda(\eta)\; d\eta
		\quad \text{ and } \quad
		I_2(y)
		=
		\int_{\Sigma_\lambda \setminus\Omega_\lambda}
		G(y, \eta)  Q_1^\lambda(\eta)\; d\eta,
	\end{equation*}
	so $h^\lambda(y) = I_1(y) + I_2(y)$. By direct computation we have
	
	\begin{equation*}
		\int_{\Omega_\lambda}
			\frac{(\abs \eta - \lambda)^2}
			{
			(1 + \abs{ \eta - \lambda e}^2)^{(n+2)/2}
			}
			\; d\eta
		\geq
		C\log \lambda,
	\end{equation*}
	so using \eqref{eq:Q1Lambda-Estimate-Ti-Unbounded} the estimate of Green's function in \eqref{eq:GLowerBoundySmall}, the estimate for $I_1$ is
	
	\begin{eqnarray}\label{eq:I1EstimateySmall}
			I_1(y)
				& \leq &
				- C\Gamma_i^{-1} \int_{\Omega_\lambda} G(y, \eta)
					\frac{\abs \eta - \lambda}
						{(1 + \abs{\eta - \lambda e}^2)^{(n + 2)/2}}
					\; d\eta
				\notag\\
				& \leq&
				 - C \Gamma_i^{-1} (\abs y - \lambda)\lambda^{-n}
					\int_{\Omega_\lambda}
					\frac{(\abs \eta - \lambda)^2}
						{(1 + \abs{ \eta - \lambda e}^2)^{(n + 2)/2}}
					\; d\eta
				\notag\\
				& \leq&
				 - C \Gamma_i^{-1}(\abs y - \lambda)\lambda^{-n} \log \lambda.
	\end{eqnarray}
	To estimate $I_2$, let
	
	\begin{equation}\label{eq:I2-Estimate-Partition}
		\begin{array}{l}
			A_1 = \{\eta\in \Sigma_\lambda: \abs{ y - \eta}\leq (\abs y - \lambda)/3\},\\
			A_2 = \{\eta\in \Sigma_\lambda:
				\abs{ y - \eta}\geq (\abs y - \lambda)/3\text{ and }
				\abs \eta \leq 8\lambda\},\\
			A_3 = \{ \eta \in \Sigma_\lambda: \abs \eta \geq 8\lambda\},
		\end{array}
	\end{equation}
	and use \eqref{eq:Abs-Q1Lambda-Estimates-Ti-Unbounded} to write $I_2(y) \leq \sum_{k = 1}^3 I_2^{k}(y)$, where
	
	\begin{equation*}
		I_2^k(y)
			=
			\Gamma_i^{-1} \int_{A_k\setminus \Omega_\lambda}
			G(y, \eta) (\abs \eta - \lambda) \abs \eta^{-2-n}\; d\eta,
			\qquad
			k = 1,2,3.
	\end{equation*}
	Using Lemma \ref{lem:GreensEstimates} and performing routine integral estimates using $\abs \eta - \lambda \leq C\abs{ y - \eta}$ for $I_2^2(y)$ we obtain
	
	\begin{equation*}
		I_2^k(y) \leq C\Gamma_i^{-1} (\abs y - \lambda) \lambda^{-n}
		\qquad
		k = 1,2,3.
	\end{equation*}
	Combining this with the estimate for $I_1(y)$ given in \eqref{eq:I1EstimateySmall} and using $R\leq \lambda$ we see that if $R$ is sufficiently large then
	
	\begin{equation*}
		h^\lambda(y) \leq - C \Gamma_i^{-1}(\abs y - \lambda)\lambda^{-n} \log \lambda
		\qquad
		y\in \overline \Sigma_\lambda \intersect \overline B_{4\lambda}.
	\end{equation*}
	To estimate $\abs{h^\lambda(y) }$ for $y\in \overline \Sigma_\lambda \intersect \overline B_{4\lambda}$, observe that the only negative term above is $I_1(y)$, so we only need to estimate $\abs{I_1(y)}$. Using \eqref{eq:Abs-Q1Lambda-Estimates-Ti-Unbounded} and \eqref{eq:GUpperBoundySmall}, we have
	
	\begin{eqnarray*}
		\abs{ I_1(y)}
			& \leq &
				C\Gamma_i^{-1} \int_{\Omega_\lambda}
					G(y, \eta) (\abs \eta - \lambda)
					\; d\eta
			\\
			& \leq &
				C\Gamma_i^{-1}
					\left(
						\lambda\int_{A_1} \abs{y - \eta}^{2-n} \; d\eta
						+
						\int_{A_2} \frac{(\abs y - \lambda)(\abs \eta^2 - \lambda^2)}
						{\lambda\abs{ y - \eta}^n}
						(\abs \eta - \lambda)
						\; d\eta
					\right)
			\\
			& \leq &
			C\Gamma_i^{-1}(\abs y - \lambda)\lambda^2, 	
	\end{eqnarray*}
	where we have used $\abs \eta - \lambda \leq C\abs{y - \eta}$ for $\eta\in A_2$. This completes the proof of Lemma \ref{lem:hLambda-Estimates-Ti-Unbounded} in the case $y\in \overline \Sigma_\lambda \intersect \overline B_{4\lambda}$. \\
	\emph{Case 2: $y\in \overline \Sigma_\lambda \setminus B_{4\lambda}$}. Let $I_1$ and $I_2$ be as in Case 1 so that $h_1= I_1 + I_2$. Using \eqref{eq:GLowerBoundyLarge} and \eqref{eq:Q1Lambda-Estimate-Ti-Unbounded} we have
	\begin{eqnarray}\label{eq:I1EstimateyLarge}
		I_1(y)
			& \leq &
			- C\Gamma_i^{-1}
				\int_{\Omega_\lambda}
					\frac{\abs \eta - \lambda}{\lambda}\abs y^{2-n}
					\frac{\abs \eta - \lambda}
						{(1 + \abs{\eta - \lambda e}^2)^{(n + 2)/2}}
				\; d\eta
			\notag\\
			& \leq &
			- C \Gamma_i^{-1} \abs y^{2-n} \lambda^{-1} \log \lambda.
	\end{eqnarray}
	To estimate $I_2$ set
	\begin{equation}\label{eq:I2-Estimate-Partition-y-Large}
		\begin{array}{ll}
			D_1 & = \{ \eta \in\Sigma_\lambda: \abs \eta <\abs y/2\}\\
			D_2 & = \{\eta \in \Sigma_\lambda: \abs \eta> 2\abs y\}\\
			D_3 & = \{\eta \in \Sigma_\lambda: \abs{y - \eta} <\abs y/2\}\\
			D_4 & = \{\eta \in \Sigma_\lambda: \abs{y - \eta} \geq \abs y/2\
				\text{ and } \abs y/2\leq \abs \eta\leq 2\abs y\},
		\end{array}
	\end{equation}
	and use both \eqref{eq:Abs-Q1Lambda-Estimates-Ti-Unbounded} and \eqref{eq:GUpperBoundyLarge} to write $I_2(y)\leq C\sum_{k= 1}^4 I_2^k(y)$, where
	
	\begin{equation*}
		I_2^k(y)
			= \Gamma_i^{-1}
			\int_{D_k \setminus \Omega_\lambda}
				\abs{y - \eta}^{2-n}\abs \eta^{-1-n}
			\; d\eta,
			\qquad
			k = 1, \cdots, 4.
	\end{equation*}
	Performing elementary integral estimates we obtain
	
	\begin{equation*}
		I_2^k(y)
			\leq
			C\Gamma_i^{-1}\abs y^{2-n}\lambda^{-1}
		\qquad
			k = 1, \cdots, 4,
	\end{equation*}
	so in view of \eqref{eq:I1EstimateyLarge}, after choosing $R$ (and hence $\lambda$) large we get
	
	\begin{equation*}
		h^\lambda(y)
			 \leq
			 - C\Gamma_i^{-1} \abs y^{2-n} \lambda^{-1}\log \lambda
		\qquad
			y\in \overline \Sigma_\lambda\setminus B_{4\lambda}.
	\end{equation*}
	It remains to estimate $\abs{h^\lambda(y)}$ for $y\in \overline \Sigma_\lambda \setminus B_{4\lambda}$. The only negative term above is $\int_{\Omega_\lambda}G(y, \eta) Q_1^\lambda(\eta)\; d\eta$, so we only need to estimate this term. Using \eqref{eq:Abs-Q1Lambda-Estimates-Ti-Unbounded} and \eqref{eq:GUpperBoundyLarge} we have
	
	\begin{eqnarray*}
		\abs{\int_{\Omega_\lambda}G(y, \eta) Q_1^\lambda(\eta)\; d\eta}
			& \leq  &
			C\Gamma_i^{-1}
				\int_{\Omega_\lambda}\abs{ y - \eta}^{2-n}(\abs \eta - \lambda)\; d\eta
			\\
			& \leq &
			C\Gamma_i^{-1}\abs y^{2-n}\lambda^{n+1}. 		
	\end{eqnarray*}
	Lemma \ref{lem:hLambda-Estimates-Ti-Unbounded} is established.
\end{proof}
	%
%
We have the following estimate for the boundary derivative of $h^\lambda$.

\begin{lem}\label{lem: hLambda-Boundary-Derivative-Estimate}
	The test function $h^\lambda$ satisfies
	\begin{equation*}
		\frac{\partial h^\lambda}{\partial y_n}(y)
			=
			\circ(1)\abs y^{-n},
		\qquad
		y\in \bdy'\Sigma_\lambda.
	\end{equation*}
\end{lem}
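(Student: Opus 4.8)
The plan is to estimate $\partial_{y_n} h^\lambda(y)$ for $y \in \bdy'\Sigma_\lambda$ directly from its integral representation
\[
	\frac{\partial h^\lambda}{\partial y_n}(y)
		=
		\int_{\Sigma_\lambda}
		\frac{\partial G}{\partial y_n}(y, \eta)
		Q_1^\lambda(\eta)\; d\eta,
\]
using the pointwise bounds on $Q_1^\lambda$ from \eqref{eq:Q1Lambda-Estimate-Ti-Unbounded}--\eqref{eq:Abs-Q1Lambda-Estimates-Ti-Unbounded} together with the estimate on $\partial G/\partial y_n$ collected in Appendix \ref{subsection:Greens-Estimates-Appendix}. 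Since $y$ lies on $\{y_n = -T_i\}$ and $T_i \to \infty$, we have $\abs y \geq T_i \to \infty$, and the extra factor $\Gamma_i^{-1}$ carried by every bound on $\abs{Q_1^\lambda}$ forces the whole integral to be $o(1)$ relative to the scale $\abs y^{-n}$; that $\Gamma_i^{-1}$ decay (equivalently, $M_i \to \infty$) is what makes the claimed $\circ(1)$ honest.

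The key steps, in order: First I would recall from the Green's-function appendix the bound on $\partial_{y_n} G(y,\eta)$ for $y \in \bdy'\Sigma_\lambda$ — it should be of the form $\abs{\partial_{y_n} G(y,\eta)} \leq C\abs y \abs{y - \eta}^{-n}$ up to the correction term, with an improved decay when $\abs\eta$ or $\abs y$ is large (this mirrors the role played by \eqref{eq:GUpperBoundySmall} and \eqref{eq:GUpperBoundyLarge} in the proof of Lemma \ref{lem:hLambda-Estimates-Ti-Unbounded}). Second, split $\Sigma_\lambda = \Omega_\lambda \cup (\Sigma_\lambda \setminus \Omega_\lambda)$ and on each piece insert the appropriate bound for $\abs{Q_1^\lambda}$ from \eqref{eq:Abs-Q1Lambda-Estimates-Ti-Unbounded}. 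Third, partition the region of integration by the size of $\abs\eta$ relative to $\lambda$ and relative to $\abs y$ — the same decomposition used for $I_2$ in Case 1 and Case 2 of Lemma \ref{lem:hLambda-Estimates-Ti-Unbounded}, i.e. sets like $\{\abs{y-\eta} \leq \abs y/2\}$, $\{\abs\eta \leq \abs y/2\}$, etc. — and carry out the resulting elementary integral estimates; on each piece the integral is bounded by $C\Gamma_i^{-1}\abs y^{-n}$ times a power of $\lambda$. Fourth, since $\lambda \in [R, R+2]$ is a \emph{fixed} constant (chosen large but independent of $i$) while $\Gamma_i^{-1} \to 0$, conclude $\partial_{y_n}h^\lambda(y) = \circ(1)\abs y^{-n}$.

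The main obstacle is bookkeeping rather than conceptual: one must verify that the weight $\abs y$ picked up by differentiating $G$ in the normal direction, combined with the slowest-decaying piece of $\abs{Q_1^\lambda}$, still integrates to something of order $\abs y^{-n}$ and not $\abs y^{-n+1}$ or worse. The delicate region is $\{\abs{y - \eta} \text{ comparable to } \abs y - \lambda\}$ near the boundary, where $\abs{y-\eta}^{-n}$ is largest; there one uses $\abs\eta - \lambda \leq C\abs{y - \eta}$ (exactly as in the $I_2^1$ and $I_2^2$ estimates above) to absorb one power and restore integrability. Once that single region is handled, the remaining pieces are dominated by the same computations already performed for $h^\lambda$ itself, and the uniform factor $\Gamma_i^{-1}$ delivers the $\circ(1)$.
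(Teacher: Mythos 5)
Your overall route --- writing $\frac{\partial h^\lambda}{\partial y_n}(y)=\int_{\Sigma_\lambda}\frac{\partial G}{\partial y_n}(y,\eta)Q_1^\lambda(\eta)\,d\eta$, splitting via the decomposition \eqref{eq:I2-Estimate-Partition-y-Large}, inserting \eqref{eq:Abs-Q1Lambda-Estimates-Ti-Unbounded} and doing elementary integrals --- is the paper's route, but two of your key assertions do not hold up. First, Appendix \ref{subsection:Greens-Estimates-Appendix} contains no bound on $\partial G/\partial y_n$; the paper obtains the kernel by differentiating \eqref{eq:Greens-Function-Formula} directly and restricting to $y_n=-T_i$, which gives
\begin{equation*}
	\sigma_n\frac{\partial G}{\partial y_n}(y,\eta)\Big|_{y\in\bdy'\Sigma_\lambda}
	=\frac{\eta_n+T_i}{\abs{y-\eta}^n}
	-\Bigl(\frac{\lambda}{\abs y}\Bigr)^n\abs{y^\lambda-\eta}^{-n}
	\Bigl(T_i\frac{\abs\eta^2}{\lambda^2}+\eta_n\Bigr).
\end{equation*}
Your guessed bound $C\abs y\,\abs{y-\eta}^{-n}$ misses the image term carrying the factor $T_i(\abs\eta/\lambda)^2$, which is not controlled by $C\abs y\,\abs{y-\eta}^{-n}$ when $\abs\eta\gg\abs y$, and which together with the first term produces the dominant contributions near $\bdy B_\lambda$ (there the two terms nearly cancel, leaving a kernel of size roughly $T_i(\abs\eta-\lambda)\lambda^{-1}\abs y^{-n}$); without the explicit formula you cannot see either the $T_i$ or this cancellation.

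Second, and more seriously, the smallness mechanism you propose is insufficient. Your Step 3 claim that each piece is bounded by $C\Gamma_i^{-1}\abs y^{-n}$ times a power of $\lambda$ is false: the region $\{\abs\eta\le\abs y/2\}$, for instance, contributes terms of size $C(\lambda)\,\Gamma_i^{-1}T_i\abs y^{-n}$, $C(\lambda)\,\Gamma_i^{-1}\abs y^{1-n}$ and $\Gamma_i^{-1}\log(\epsilon_i\Gamma_i)\abs y^{-n}$. Since $\abs y\geq T_i\to\infty$, the fact that $\Gamma_i^{-1}\to0$ by itself gives nothing here --- indeed the largeness of $\abs y$, which you cite, works in the wrong direction: $\Gamma_i^{-1}\abs y^{1-n}$ is $\circ(1)\abs y^{-n}$ only if $\Gamma_i^{-1}\abs y=\circ(1)$, and $\Gamma_i^{-1}T_i\abs y^{-n}$ is $\circ(1)\abs y^{-n}$ only if $T_i\Gamma_i^{-1}=\circ(1)$. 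The missing ingredient, which the paper's proof explicitly invokes, is the containment $\Sigma_\lambda\subset B(0,\epsilon_i\Gamma_i)$: it gives $\Gamma_i^{-1}\abs y\leq\epsilon_i$ for $y\in\overline\Sigma_\lambda$, and (since $\bdy'\Sigma_\lambda\neq\emptyset$ forces $T_i\leq\epsilon_i\Gamma_i$) also $T_i\Gamma_i^{-1}\leq\epsilon_i$, and these are what turn the extra factors of $\abs y$, $T_i$ and $\log\abs y$ into $\circ(1)$. As written, your argument never uses this upper bound on $\abs y$ relative to $\Gamma_i$, so it does not reach the stated estimate.
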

%
\begin{proof}
	By direction computation we have
	
	\begin{equation*}
		\sigma_n \left. \frac{\partial G}{\partial y_n}(y, \eta) \right|_{y\in \bdy'\Sigma_\lambda}
			=
			\frac{\eta_n -y_n}{\abs{ y - \eta}^n}
			- \left(\frac{\lambda}{\abs y}\right)^n
			\abs{ y^\lambda - \eta}^{-n}
			\left(
				T_i\left(\frac{\abs \eta}{\lambda}\right)^2 + \eta_n
			\right).
	\end{equation*}
	Partition $\Sigma_\lambda$ as in \eqref{eq:I2-Estimate-Partition-y-Large}. Then use \eqref{eq:Abs-Q1Lambda-Estimates-Ti-Unbounded} and perform standard integral estimates using $y_n = -T_i$ and $\Sigma_\lambda \subset B(0, \epsilon_i\Gamma_i)$ to obtain
	
	\begin{equation*}
		\int_{D_k}
			\abs{\frac{ \partial G}{\partial y_n}(y, \eta)}
			\abs{Q_1^\lambda(\eta)}
			\; d\eta
		=
		\circ(1)\abs y^{-n},
		\qquad
		k = 1, \cdots, 4.
	\end{equation*}
\end{proof}
%
By construction of $h^\lambda$ and since $h^\lambda\leq0$ in $\Sigma_\lambda$ we have $L_i(w^\lambda + h^\lambda)(y) \leq 0$ for $y\in \Sigma_\lambda$. Moreover, by Lemmas \ref{lem:Mean-Value-Coefficient-Estimates-Ti-Unbounded} - \ref{lem: hLambda-Boundary-Derivative-Estimate} we obtain $B_i(w^\lambda + h^\lambda)(y) \leq 0$ for $y\in \bdy'\Sigma_\lambda \intersect \overline{\mathcal O}_\lambda$, so $h^\lambda$ satisfies \eqref{eq:Desired-Differential-Inequalities}. \\

The next step is to show that the moving sphere process can start.

\begin{lem}\label{lem:Moving-Spheres-Can-Start}
	If $i$ is sufficiently large, then
	
	\begin{equation}\label{eq:Moving-Spheres-Start-Inequality}
		w^{\lambda_0}(y) + h^{\lambda_0}(y) >0,
		\qquad
		y\in \Sigma_{\lambda_0}.
	\end{equation}
\end{lem}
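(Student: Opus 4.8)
The plan is to verify the inequality $w^{\lambda_0} + h^{\lambda_0} > 0$ on $\Sigma_{\lambda_0}$ by splitting $\Sigma_{\lambda_0}$ into a region close to the sphere $\bdy B_{\lambda_0}$ and a region far from it, and handling the two regions by different mechanisms. Recall $\lambda_0 = R$ and $\lambda^* = \sqrt{1+R^2} > \lambda_0$, so by \eqref{eq:Critical-Position-Inequalities} the model difference $(U_{R} - U_{R}^{\lambda_0})$ is strictly positive on $\bb R^n \setminus \overline B_{\lambda_0}$. Since $v_{R,i} \to U_R$ in $C^2$ over compact sets and $w^{\lambda_0} = v_{R,i} - v_{R,i}^{\lambda_0}$, for $i$ large $w^{\lambda_0}$ is bounded below by a positive multiple of $(U_R - U_R^{\lambda_0})$ on any fixed compact annulus; meanwhile Lemma \ref{lem:hLambda-Estimates-Ti-Unbounded} gives $|h^{\lambda_0}(y)| \leq C\Gamma_i^{-1}(\abs y - \lambda_0)\lambda_0^2$ there, which is $o(1)$ as $i\to\infty$ since $\Gamma_i \to \infty$ and $\lambda_0 = R$ is fixed. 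Hence on a fixed large compact annulus $\{ \lambda_0 \le \abs y \le L\}$ the perturbation cannot destroy positivity.

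For the region $\abs y$ large, I would use the test function $h^{\lambda_0}$ together with the maximum principle rather than the explicit $C^2$ convergence. The key point is that the differential inequalities \eqref{eq:Desired-Differential-Inequalities} hold for $w^{\lambda_0} + h^{\lambda_0}$ on $\Sigma_{\lambda_0} \intersect \overline{\mathcal O}_{\lambda_0}$ (this was established in the paragraph following Lemma \ref{lem: hLambda-Boundary-Derivative-Estimate}); off of $\mathcal O_{\lambda_0}$ we have $w^{\lambda_0} > v_{R,i}^{\lambda_0} > 0$ by definition \eqref{eq:OLambda-Definition}, and $|h^{\lambda_0}| = o(1)\abs y^{2-n}$ is small compared with $v_{R,i}^{\lambda_0} \geq C\abs y^{2-n}$ (Lemma \ref{lem:vRi-Estimates}) for $i$ large, so $w^{\lambda_0} + h^{\lambda_0} > 0$ there automatically. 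Thus positivity can only fail inside $\mathcal O_{\lambda_0}$, where the maximum principle applies: if $w^{\lambda_0} + h^{\lambda_0}$ were somewhere nonpositive in $\Sigma_{\lambda_0}$, it would attain a nonpositive interior (or boundary-via-Hopf) minimum in $\overline{\Sigma_{\lambda_0} \intersect \mathcal O_{\lambda_0}}$; but on $\bdy\Sigma_{\lambda_0} \intersect \bdy B_{\lambda_0}$ we have $w^{\lambda_0} = 0$ and $h^{\lambda_0} = 0$, on $\bdy''\Sigma_{\lambda_0}$ (the outer boundary $\abs y = \epsilon_i\Gamma_i$) the lower bound \eqref{eq:vi-Lower-Bound-Upper-Boundary} gives $w^{\lambda_0} \geq \sqrt i \abs y^{2-n} - C\lambda_0^{n-2}\abs y^{2-n} > 0$ dominating $|h^{\lambda_0}|$, and on $\bdy'\Sigma_{\lambda_0} \intersect \overline{\mathcal O}_{\lambda_0}$ the boundary operator inequality $B_i(w^{\lambda_0}+h^{\lambda_0}) \le 0$ together with Hopf's lemma rules out a negative boundary minimum. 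Combining with the compact-annulus argument of the first paragraph to handle the corner region $\abs y \approx \lambda_0$ (where $w^{\lambda_0}$ degenerates to $0$ and one needs the strict sign of $U_R - U_R^{\lambda_0}$ and Hopf on $\bdy B_{\lambda_0}$), we conclude $w^{\lambda_0} + h^{\lambda_0} > 0$ throughout.

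The main obstacle is the matching near $\bdy B_{\lambda_0}$: there $w^{\lambda_0}$ vanishes to first order while $h^{\lambda_0}$ is negative and also vanishes to first order (Lemma \ref{lem:hLambda-Estimates-Ti-Unbounded} bounds it by $-C_1\Gamma_i^{-1}(\abs y - \lambda_0)\lambda_0^{-n}\log\lambda_0$ from above and by $C_2\Gamma_i^{-1}(\abs y-\lambda_0)\lambda_0^2$ in absolute value), so a naive comparison of sizes is delicate. The resolution is that along $\bdy B_{\lambda_0}$ both $w^{\lambda_0}$ and $w^{\lambda_0}+h^{\lambda_0}$ are exactly zero with $w^{\lambda_0}=0$, and the strict inequality $\frac{\partial}{\partial \nu}(U_R - U_R^{\lambda_0}) > 0$ on $\bdy B_{\lambda_0}$ (from \eqref{eq:Critical-Position-Inequalities} and Hopf) transfers, via $C^1$ convergence, to $\frac{\partial w^{\lambda_0}}{\partial \nu} \geq c_0 > 0$ on $\bdy B_{\lambda_0}$ for $i$ large, while $|\frac{\partial h^{\lambda_0}}{\partial \nu}|$ is $O(\Gamma_i^{-1})$ there; hence $\frac{\partial}{\partial\nu}(w^{\lambda_0}+h^{\lambda_0}) > 0$ on $\bdy B_{\lambda_0}$, which (combined with the vanishing on $\bdy B_{\lambda_0}$) gives positivity in a uniform one-sided neighborhood of $\bdy B_{\lambda_0}$, and the maximum principle argument above closes the gap on the rest of $\Sigma_{\lambda_0}$.
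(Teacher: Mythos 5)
Your opening step (positivity on a fixed compact piece $\Sigma_{\lambda_0}\intersect \overline B_{R_1}$ from the $C^1$-convergence of $w^{\lambda_0}$ to $U_R-U_R^{\lambda_0}$, the strict positivity and Hopf behavior of the limit near $\bdy B_{\lambda_0}$, and the bound $\abs{h^{\lambda_0}(y)}\leq C\Gamma_i^{-1}(\abs y-\lambda_0)\lambda_0^2$ of Lemma \ref{lem:hLambda-Estimates-Ti-Unbounded}) agrees with the paper. The gap is in your treatment of the outer region $\Sigma_{\lambda_0}\setminus B_{R_1}$. You propose to apply the maximum principle directly to $w^{\lambda_0}+h^{\lambda_0}$ via \eqref{eq:Desired-Differential-Inequalities}, but the interior operator is $L_i=\lap+H_i\xi_1$ with \emph{nonnegative} zeroth-order coefficient, and for such operators a nonpositive interior minimum is not excluded: at an interior minimizer $y_0$ with $(w^{\lambda_0}+h^{\lambda_0})(y_0)\leq 0$ one has $\lap(w^{\lambda_0}+h^{\lambda_0})(y_0)\geq 0$ and $H_i\xi_1(w^{\lambda_0}+h^{\lambda_0})(y_0)\leq 0$, which is perfectly consistent with $L_i(w^{\lambda_0}+h^{\lambda_0})(y_0)\leq 0$. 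This is exactly why, in the continuation step of Proposition \ref{lem:Vanishing-Ti-Unbounded}, nonnegativity of $w^{\bar\lambda}+h^{\bar\lambda}$ is assumed \emph{first} (making the function superharmonic) before the strong maximum principle is invoked; using the same inequalities to generate the starting positivity is circular unless you add a generalized maximum principle with an explicit positive supersolution of $L_i$ on the exterior region (plausible, since $\xi_1\leq C\abs y^{-4}$ on the relevant set, e.g. via $\abs y^{2-n}-A\abs y^{-n}$, but you never supply it). A second problem is the bottom boundary: from $B_i(w^{\lambda_0}+h^{\lambda_0})\leq 0$ a boundary minimizer $y^*$ gives $\partial_{y_n}(w^{\lambda_0}+h^{\lambda_0})(y^*)\leq c_i\xi_2\,(w^{\lambda_0}+h^{\lambda_0})(y^*)$, which contradicts the first-derivative test only if $c_i\geq 0$; in Theorem \ref{prop:Ti-Unbounded} the function $c$ is merely bounded and may be negative, so ``Hopf rules out a negative boundary minimum'' is not justified as stated.

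The paper closes the outer region differently, and its mechanism avoids both issues: choosing $\epsilon_0$ with $U_R^{\lambda_0}(y)\leq(1-5\epsilon_0)\abs y^{2-n}$ for $\abs y\geq R_1$, hence $v_{R,i}^{\lambda_0}\leq(1-4\epsilon_0)\abs y^{2-n}$ there, and using $h^{\lambda_0}=\circ(1)\abs y^{2-n}$, it suffices to prove $v_{R,i}(y)>(1-\epsilon_0)\abs y^{2-n}$ on $\Sigma_{\lambda_0}\setminus B_{R_1}$. This is done with the barrier $f_i=v_{R,i}-(1-\epsilon_0)\abs y^{2-n}$, which is genuinely superharmonic (no zeroth-order term, so the classical maximum principle applies), positive on $\bdy B_{R_1}$, and of size $\sqrt i\,\abs y^{2-n}$ on $\{\abs y=\epsilon_i\Gamma_i\}$ by \eqref{eq:vi-Lower-Bound-Upper-Boundary}; a nonpositive minimum on $\{y_n=-T_i\}$ is impossible because there $\partial_{y_n}f_i(y^*)=c_i v_{R,i}(y^*)^{n/(n-2)}-(1-\epsilon_0)(n-2)T_i\abs{y^*}^{-n}$, and the nonpositivity of the minimum bounds $v_{R,i}(y^*)\leq C\abs{y^*}^{2-n}$, so $T_i\to\infty$ forces this derivative to be negative regardless of the sign of $c_i$, contradicting $\partial_{y_n}f_i(y^*)\geq 0$. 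To repair your version you would need either to reproduce this comparison for $v_{R,i}$ itself, or to supply the missing positive supersolution for $L_i$ together with a sign-free argument on $\bdy'\Sigma_{\lambda_0}$ exploiting the $-CT_i\lambda^{n-2}\abs y^{-n}$ term in $Q_2^\lambda$.
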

%
\begin{proof}
	If $R_1\gg R$ is fixed and large, then by the convergence of $w^{\lambda_0}$ to $U_R - U_R^{\lambda_0}$, the properties of $U_R - U_R^{\lambda_0}$ and Lemma \ref{lem:hLambda-Estimates-Ti-Unbounded} we have
	\begin{equation*}
		(w^{\lambda_0} + h^{\lambda_0})(y) >0
		\qquad
		y\in \Sigma_{\lambda_0}\intersect \overline B_{R_1}.
	\end{equation*}
	We only need to show \eqref{eq:Moving-Spheres-Start-Inequality} for $y\in \Sigma_{\lambda_0} \setminus B_{R_1}$. By direct computation it is easy to see that there exists $\epsilon_0(\lambda_0)>0$ such that
	
	\begin{equation}\label{eq:URLambda-Upper-Bound-R1}
		U_R^{\lambda_0}(y)
			\leq
			(1 - 5\epsilon_0) \abs y^{2-n},
			\qquad
			\abs y \geq R_1.
	\end{equation}	
	Moreover, by choosing $R_1$ larger if necessary, we may simultaneously achieve
	
	\begin{equation}\label{eq:UR-Lower-Bound-R1}
		U_R(y)
			\geq
			\left( 1- \frac{\epsilon_0}{2}\right)
			\abs y^{2-n},
		\qquad
		\abs y = R_1.	
	\end{equation}
	As an immediate consequence of \eqref{eq:URLambda-Upper-Bound-R1} and the convergence of $v_{R,i}$ to $U_R$ we have
	
	\begin{equation*}
		v_{R,i}^{\lambda_0}(y)
			\leq
			(1 - 4\epsilon_0)
			\abs y^{2-n}
		\qquad
		y\in \Sigma_{\lambda_0} \setminus B_{R_1}.
	\end{equation*}
	Since $h^{\lambda_0}(y) = \circ(1) \abs y^{2-n}$ in $\Sigma_{\lambda_0}$ Lemma \ref{lem:Moving-Spheres-Can-Start} will be established once we show
	
	\begin{equation*}
		v_{R,i}(y)
			>
			(1 - \epsilon_0)
			\abs y^{2-n}
		\qquad
		y\in \Sigma_{\lambda_0}\setminus B_{R_1}.
	\end{equation*}
	This will be achieved via the maximum principle. By the convergence of $v_{R,i}$ to $U_R$, inequality \eqref{eq:UR-Lower-Bound-R1} and \eqref{eq:vi-Equations}, if $i$ is sufficiently large the function
	
	\begin{equation*}
		f_i(y) = v_{R,i}(y) - (1 - \epsilon_0)\abs y^{2-n}
	\end{equation*}
	is superharmonic in $\Sigma_{\lambda_0}\setminus B_{R_1}$ and positive on $\bdy B_{R_1}$. Moreover, by \eqref{eq:vi-Lower-Bound-Upper-Boundary},
	
	\begin{equation*}
		f_i(y)
			\geq
			C\sqrt i \abs y^{2-n},
		\qquad
		y\in \bdy \Sigma_{\lambda_0} \intersect\{ \abs y = \epsilon_i \Gamma_i\}.
	\end{equation*}
	By the maximum principle, if $f_i$ attains a nonpositive minimum value on $\overline \Sigma_{\lambda_0} \setminus B_{R_1}$, this value must be achieved on $\bdy'\Sigma_{\lambda_0}$. We show that this cannot happen. Accordingly, suppose $y_i^*\in \bdy'\Sigma_{\lambda_0}$ satisfies
	
	\begin{equation}\label{eq:Nonpositive-Boundary-Minimizer}
		\min_{y\in\overline\Sigma_{\lambda_0}\setminus B_{R_1}} f_i(y)
			=
			f_i(y_i^*)
			\leq
			0.
	\end{equation}
	Since $y_i^*$ is a minimizer, $\frac{\partial f_i}{\partial y_n}(y_i^*)\geq 0$. On the other hand, using \eqref{eq:vi-Equations}, \eqref{eq:Nonpositive-Boundary-Minimizer} and the assumption $T_i\to\infty$, if $i$ is sufficiently large then
	
	\begin{eqnarray*}
		\frac{\partial f_i}{\partial y_n}(y_i^*)
			& = &
			c_i(x_i + \Gamma_i^{-1}(y - Re)) v_{R,i}(y_i^*)^{\frac{n}{n - 2}}
			-
			C(\epsilon_0) T_i \abs{y_i^*}^{-n}
			\\
			&\leq &
			\left( \sup_i \norm{c_i}_{C^0(\overline{B_3^+})} - C T_i\right)
			\abs{y_i^*}^{-n}
			\\
			& < &
			0,
	\end{eqnarray*}
	a contradiction.
\end{proof}
%

With Lemma \ref{lem:Moving-Spheres-Can-Start} proven we can finally prove Proposition \ref{lem:Vanishing-Ti-Unbounded}.

\begin{proof}[Proof of Proposition \ref{lem:Vanishing-Ti-Unbounded}]
By Lemma \ref{lem:Moving-Spheres-Can-Start},

\begin{equation*}
	\bar \lambda
		=
		\sup
		\{
			\lambda \in [\lambda_0, \lambda_1]:
			(w^\mu + h^\mu)(y)\geq 0 \; \text{ in } \; \Sigma_{\mu}
			\quad
			\text{ for all } \lambda_0 \leq \mu \leq \lambda
		\}
\end{equation*}
is well defined. We will show that $\bar \lambda = \lambda_1>\lambda^*$ which, together with \eqref{eq:Critical-Position-Inequalities} and the estimate $h^\lambda(y) = \circ(1) \abs y^{2-n}$ for $\lambda_0 \leq \lambda \leq \lambda_1$ contradicts the convergence of $v_{R,i}$ to $U_R$. \\

Suppose $\bar \lambda <\lambda_1$. By continuity of $\lambda \mapsto w^\lambda + h^\lambda$ we have

\begin{equation*}
	(w^{\bar\lambda} + h^{\bar \lambda})(y)
		\geq
		0
	\qquad
	y\in \Sigma_{\bar \lambda}.
\end{equation*}
Moreover, $w^{\bar\lambda} + h^{\bar \lambda}$ satisfies

\begin{equation*}
	\left\{
		\begin{array}{ll}
			L_i(w^{\bar\lambda} + h^{\bar \lambda})(y) \leq 0
			&
			y\in \Sigma_{\bar \lambda}
			\\
			B_i(w^{\bar\lambda} + h^{\bar \lambda})(y)\leq 0
			&
			y\in \bdy' \Sigma_{\bar \lambda}\intersect \overline{\mathcal O}_{\bar\lambda}
			\\
			(w^{\bar\lambda} + h^{\bar \lambda})(y) = 0
			&
			y\in \bdy \Sigma_{\bar \lambda} \intersect B_{\bar \lambda}.
		\end{array}
	\right.
\end{equation*}
By \eqref{eq:vi-Lower-Bound-Upper-Boundary} and the estimate $\abs{h^{\bar\lambda}(y)} = \circ(1) \abs y^{2-n}$, we have

\begin{equation*}
	(w^{\bar\lambda} + h^{\bar \lambda})(y)
		>	
		0
	\qquad
	y\in \bdy \Sigma_{\bar \lambda}\intersect\{ \abs y = \epsilon_i \Gamma_i\}.
\end{equation*}
The strong maximum principle now ensures that $(w^{\bar\lambda} + h^{\bar \lambda})(y)>0$ for $y\in \Sigma_{\bar \lambda}$. By Hopf's lemma,

\begin{equation*}
	\frac{\partial}{\partial \nu}(w^{\bar\lambda} + h^{\bar \lambda})(y)
		>
		0
	\qquad
	y\in \bdy B_{\bar \lambda},
\end{equation*}
where $\nu$ is the outer unit normal vector on $\bdy B_{\bar \lambda}$ (pointing into $\Sigma_{\bar \lambda}$). Exploiting the continuity of $\lambda \mapsto w^{\lambda} + h^{ \lambda}$ once more we obtain a contradiction to the maximality of $\bar \lambda$.
Proposition \ref{lem:Vanishing-Ti-Unbounded} is established.
\end{proof}

%
%
\subsection{Rapid Vanishing of $\Grad K_i(x_i)$}
\label{subsection:Fast-Grad-Ki-Vanishing-Ti-Unbounded}
In this section we show that $\Grad K_i(x_i)$ vanishes quickly.

\begin{prop}\label{lem:Fast-Vanishing-Ti-Unbounded}
	There exists a constant $C>0$ such that for all $i$,
	
	\begin{equation*}
		\delta_i\Gamma_i^{n - 3} \leq C.
	\end{equation*}
\end{prop}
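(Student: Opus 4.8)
The plan is to argue by contradiction, using the method of moving spheres a second time, now with a more refined test function than in the proof of Proposition \ref{lem:Vanishing-Ti-Unbounded}. Suppose, contrary to the claim, that $\delta_i \Gamma_i^{n-3} \to \infty$ along a subsequence. By assumption \ref{item:K3} we may again normalize $\Grad K_i(x_i)/\delta_i \to e = (1,0,\dots,0)$, set $v_{R,i}(y) = v_i(y-Re)$, and form the Kelvin inversions $v_{R,i}^\lambda$ and the difference $w^\lambda = v_{R,i} - v_{R,i}^\lambda$ on $\Sigma_\lambda = \Omega_i \setminus \overline B_\lambda$, exactly as before. The function $w^\lambda$ solves the same pair of equations \eqref{eq:wLambda-Equations-Ti-Unbounded}, with interior error term $Q_1^\lambda$ given by \eqref{eq:Interior-Error-Function} and boundary error term $Q_2^\lambda$ by \eqref{eq:Q2-Lambda}. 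The key new point is that now $H_i(y^\lambda - Re) - H_i(y-Re)$ is, to leading order, $\delta_i \Gamma_i^{-1}$ times a fixed geometric factor (not merely $O(\Gamma_i^{-1})$), so the favorable sign of $Q_1^\lambda$ on the cone $\Omega_\lambda$ is quantitatively stronger by the factor $\delta_i$. Concretely, the analogue of \eqref{eq:Q1Lambda-Estimate-Ti-Unbounded} becomes
\begin{equation*}
	Q_1^\lambda(y) \le - a_1 \delta_i \Gamma_i^{-1}(\abs y - \lambda)\left(\frac{1}{1 + \abs{y - \lambda e}^2}\right)^{(n+2)/2}, \qquad y \in \Omega_\lambda,
\end{equation*}
with a matching upper bound $\abs{Q_1^\lambda(y)} \le a_2 \delta_i \Gamma_i^{-1}(\abs y - \lambda)$ on $\overline\Omega_\lambda$ and the decaying version outside. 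Since Proposition \ref{lem:Vanishing-Ti-Unbounded} already gives $\delta_i \to 0$, the Taylor expansion of $K_i$ is controlled by assumption \ref{item:K1} (the flatness condition \eqref{eq:CLassumption}), so that the higher-order terms in the expansion of $H_i(y^\lambda - Re) - H_i(y-Re)$ are genuinely lower order compared to the $\delta_i$-term; this is where \ref{item:K1} is essential.

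Next I would redefine the test function $h^\lambda$ by the same Green's-function representation \eqref{eq:Naive-hLambda-Definition}, $h^\lambda(y) = \int_{\Sigma_\lambda} G(y,\eta) Q_1^\lambda(\eta)\,d\eta$, and re-run the estimates of Lemma \ref{lem:hLambda-Estimates-Ti-Unbounded}, which now produce
\begin{equation*}
	h^\lambda(y) \le - C_1 \delta_i \Gamma_i^{-1}(\abs y - \lambda)\lambda^{-n}\log\lambda, \qquad y \in \overline\Sigma_\lambda \intersect \overline B_{4\lambda},
\end{equation*}
and $\abs{h^\lambda(y)} \le C_2 \delta_i \Gamma_i^{-1}(\abs y - \lambda)\lambda^2$, with the corresponding bounds for $\abs y \ge 4\lambda$, and $\partial_{y_n} h^\lambda = o(1)\abs y^{-n}$ on $\bdy'\Sigma_\lambda$ as in Lemma \ref{lem: hLambda-Boundary-Derivative-Estimate}. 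Because $\delta_i \Gamma_i^{n-3}\to\infty$ and $\lambda \approx R$ is fixed, the negative main term $-C_1 \delta_i \Gamma_i^{-1}(\abs y - \lambda)\abs y^{2-n}\lambda^{-1}\log\lambda$ for $\abs y \ge 4\lambda$ is now of order $\delta_i \Gamma_i^{-1}$, which dwarfs the $O(\Gamma_i^{-2})$-type contributions from the gradient of $K_i$ at other points and, crucially, also dwarfs the $o(1)$ perturbations coming from the boundary curvature term $Q_2^\lambda$ (recall $c_i$ is only bounded, so $Q_2^\lambda$ contributes terms of size $O(T_i^{-1}\abs y^{-n})$ and $O(\abs y^{-n})$, which are $o(1)\abs y^{2-n}\cdot(\text{something})$ relative to the $\delta_i$-scale provided we also use $T_i\to\infty$). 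One then verifies \eqref{eq:Desired-Differential-Inequalities}, namely $L_i(w^\lambda + h^\lambda)\le 0$ in $\Sigma_\lambda$ and $B_i(w^\lambda + h^\lambda)\le 0$ on $\bdy'\Sigma_\lambda \intersect \overline{\mathcal O}_\lambda$, exactly as in Subsection \ref{subsection:Grad-Ki-Vanishing-Ti-Unbounded}.

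With these ingredients the moving-spheres argument runs verbatim: Lemma \ref{lem:Moving-Spheres-Can-Start} shows the process can start at $\lambda_0 = R$ (the extra factor $\delta_i$ only makes the relevant strict inequalities easier), and then the supremum $\bar\lambda$ of admissible $\lambda \in [\lambda_0,\lambda_1]$ is shown to equal $\lambda_1 > \lambda^* = \sqrt{1+R^2}$ by the strong maximum principle and Hopf's lemma, since the boundary behavior on $\abs y = \epsilon_i\Gamma_i$ is controlled by \eqref{eq:vi-Lower-Bound-Upper-Boundary} and $h^{\bar\lambda} = o(1)\abs y^{2-n}$. But $\bar\lambda = \lambda_1 > \lambda^*$ forces $(w^{\lambda_1} + h^{\lambda_1})(y) \ge 0$ on $\Sigma_{\lambda_1}$, while \eqref{eq:Critical-Position-Inequalities} gives $U_R - U_R^{\lambda_1} < 0$ on a fixed-size region, and since $v_{R,i} \to U_R$ in $C^2_{\rm loc}$ and $h^{\lambda_1} = o(1)\abs y^{2-n}$, this is a contradiction. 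I expect the main obstacle to be the bookkeeping in the perturbation estimates: one must show that every error term — the higher-order Taylor remainders of $K_i$ (handled by \ref{item:K1} together with $\delta_i\to 0$), the contributions of $\Grad K_i$ evaluated away from $x_i$, and the boundary term $Q_2^\lambda$ (handled by $T_i\to\infty$ and $c_i$ bounded) — is $o(\delta_i \Gamma_i^{-1})$ on the relevant scales, so that the negative leading term survives. This is precisely the step where the hypothesis $\delta_i\Gamma_i^{n-3}\to\infty$ is used, and it is the crux of the proof; once it is in place, the moving-spheres machinery is identical to the previous subsection.
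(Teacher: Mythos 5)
There is a genuine gap at the heart of your construction: you keep the naive test function $h^\lambda(y)=\int_{\Sigma_\lambda}G(y,\eta)Q_1^\lambda(\eta)\,d\eta$ from Subsection \ref{subsection:Grad-Ki-Vanishing-Ti-Unbounded} and assert that the estimates of Lemma \ref{lem:hLambda-Estimates-Ti-Unbounded} go through verbatim with an extra factor $\delta_i$, in particular that $h^\lambda$ stays negative for $\abs y\geq 4\lambda$ and that the higher-order Taylor terms of $K_i$ are ``genuinely lower order'' than the $\delta_i$-term. That is precisely what fails in the rapid-vanishing regime. The flatness condition \ref{item:K1} gives $\abs{\Grad^jK_i(x_i)}\leq C\delta_i^{(n-2-j)/(n-3)}$, so on the domain $\Sigma_\lambda$ of radius $\epsilon_i\Gamma_i$ the $j$-th order Taylor contribution to $H_i(y^\lambda-Re)-H_i(y-Re)$ is of size $\delta_i\Gamma_i^{-1}(\abs y-\lambda)\,\ell_i^{-(j-1)}\abs y^{\,j-1}$ with $\ell_i=\delta_i^{1/(n-3)}\Gamma_i$ (this is exactly Lemma \ref{lem:Scalar-Curvature-Function-Fast-Estimates}). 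Under the contradiction hypothesis one only knows $\ell_i\to\infty$; there is no control of $\ell_i$ against $\epsilon_i\Gamma_i$, so for $\abs y\gg\ell_i$ these terms dominate the first-order term rather than being lower order. Feeding them through the Green's representation produces positive contributions of size $\Gamma_i^{-1}\delta_i\abs y^{2-n}\bigl(\ell_i^{-1}\log(\abs y/\lambda)+\sum_{j=2}^{n-3}\ell_i^{-j}\abs y^{\,j-1}\bigr)$ in the outer region, which can exceed the negative main term $-C\Gamma_i^{-1}\delta_i\abs y^{2-n}\lambda^{-1}\log\lambda$; so with your definition $h^\lambda$ need not be nonpositive. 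This is fatal for the interior inequality: since $-\lap h^\lambda=Q_1^\lambda$ exactly, one has $L_i(w^\lambda+h^\lambda)=H_i(y-Re)\xi_1(y)h^\lambda(y)$, which is positive wherever $h^\lambda>0$, and the maximum-principle/moving-spheres argument no longer closes.

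The paper's proof of Proposition \ref{lem:Fast-Vanishing-Ti-Unbounded} exists precisely to handle this: instead of integrating $G$ against $Q_1^\lambda$, it builds a modified source $\Hat Q^\lambda$ in \eqref{eq:QLambdaHat-Definition} which is strictly negative on a cone $\mathcal C_\lambda$ and strictly \emph{positive} (of size $\Gamma_i^{-1}\delta_i(\abs y-\lambda)\sum_j\ell_i^{-j}\abs y^{\,j-2-n}$) off $\Omega_\lambda$, and defines $h^\lambda=\int_{\Sigma_\lambda}G(y,\eta)\Hat Q^\lambda(\eta)\,d\eta$. Then $L_i(w^\lambda+h^\lambda)=(Q_1^\lambda-\Hat Q^\lambda)+H_i\xi_1h^\lambda$, and the quantitatively negative leftover $Q_1^\lambda-\Hat Q^\lambda$ in \eqref{eq:QLambda-QLambdaHat} is exactly what absorbs the possibly positive term $H_i\xi_1h^\lambda$ (using Lemma \ref{lem:Mean-Value-Coefficient-Estimates-Ti-Unbounded} and Lemma \ref{lem:Fast-hLambda-Estimates}, after enlarging $R$). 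Your boundary discussion also slightly misstates the roles: $Q_2^\lambda$ is not a perturbation to be dwarfed but the favorable term, $Q_2^\lambda\leq -CT_i\lambda^{n-2}\abs y^{-n}$ by Lemma \ref{lem:Q2Lambda-Estimate}, which dominates the $\circ(1)\abs y^{-n}$ contributions of $h^\lambda$ on $\bdy'\Sigma_\lambda$; this part is repairable, but without the $\Hat Q^\lambda$ device (or some equivalent way to restore a negative interior source where $h^\lambda$ may be positive) your proof of the proposition does not go through.
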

As in the proof of Proposition \ref{lem:Vanishing-Ti-Unbounded}, the proof of Proposition \ref{lem:Fast-Vanishing-Ti-Unbounded} is by contradiction. For ease of notation, set

\begin{equation*}
	\ell_i = \delta_i^{\frac 1{n - 3}} \Gamma_i
\end{equation*}
and pass to a subsequence for which $\ell_i\to \infty$. As in the proof of Proposition \ref{lem:Vanishing-Ti-Unbounded} we assume that $\delta_i^{-1}\Grad K_i(x_i) \to e$ and consider the functions $v_{R,i}$ and $U_R$ as well as their Kelvin inversions $v_{R,i}^\lambda$ and $U_R^\lambda$. With $w^\lambda$ as before, the equalities in \eqref{eq:wLambda-Equations-Ti-Unbounded} are still satisfied  and we seek to construct a test function $h^\lambda$ such that \eqref{eq:Perturbation-Test-Function} and \eqref{eq:Desired-Differential-Inequalities} hold. \\

Before constructing $h^\lambda$, we begin with some useful estimates. The following estimate is analogous to the estimate given in lemma \ref{lem:Scalar-Curvature-Function-Estimates}.

\begin{lem}\label{lem:Scalar-Curvature-Function-Fast-Estimates}
	There exist positive constants $C_1$ and $C_2$ such that for $i$ sufficiently large,
	
	\begin{equation*}
		\left\{
			\begin{array}{ll}
				H_i(y^\lambda - Re) - H_i(y - Re)
					\leq
					- C_1 \Gamma_i^{-1}\delta_i (\abs y - \lambda)
					&
					y\in \Omega_\lambda
					\\
				\abs{H_i(y^\lambda - Re) - H_i(y - Re) }
					\leq
					C_2\Gamma_i^{-1} \delta_i (\abs y - \lambda)
					\sum_{j = 0}^{n - 3} \ell_i^{-j} \abs y^j
					&
					y\in \Sigma_\lambda.
			\end{array}
		\right.
	\end{equation*}
\end{lem}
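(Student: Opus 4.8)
The plan is to estimate the difference $H_i(y^\lambda - Re) - H_i(y - Re)$ by interpolating between the two points $y-Re$ and $y^\lambda - Re$ along the segment joining them, using the fundamental theorem of calculus, and then to exploit the flatness assumption \ref{item:K1} together with $\Grad K_i(x_i) = \delta_i e$ to extract the sign and the claimed quantitative decay. Recall that $H_i(z) = K_i(x_i + \Gamma_i^{-1} z)$, so $\Grad H_i(z) = \Gamma_i^{-1} \Grad K_i(x_i + \Gamma_i^{-1} z)$ and, more generally, $\Grad^j H_i(z) = \Gamma_i^{-j} \Grad^j K_i(x_i + \Gamma_i^{-1} z)$. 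Writing $z_t = (1-t)(y - Re) + t(y^\lambda - Re)$ for $t\in[0,1]$, we have
\begin{equation*}
	H_i(y^\lambda - Re) - H_i(y - Re)
		=
		\int_0^1 \lb \Grad H_i(z_t), y^\lambda - y\rb \; dt.
\end{equation*}

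The first step is to Taylor-expand $\Grad K_i$ about $x_i$. Since $x_i + \Gamma_i^{-1} z_t$ lies within a fixed distance of $x_i$ (because $\Sigma_\lambda \subset B(0,\epsilon_i\Gamma_i)$ and $\abs{R}$ is fixed, the argument stays in $\overline{B_1^+}$ for $i$ large), assumption \ref{item:K1} gives $\abs{\Grad^{j} K_i(x)} \le C_0 \delta_i^{(n-2-j)/(n-3)}$ for $j = 1, \dots, n-2$ at points near a critical-point-free region, but near $x_i$ we use instead the Taylor formula with the flatness bounds on the higher derivatives: for $j\ge 2$,
\begin{equation*}
	\abs{\Grad^j K_i(x_i + \Gamma_i^{-1} z_t)}
		\le
		C\, \delta_i^{\frac{n-2-j}{n-3}},
\end{equation*}
so that, expanding $\Grad K_i$ to order $n-3$ about $x_i$ and controlling the remainder by $\Grad^{n-2} K_i$, we obtain
\begin{equation*}
	\Grad K_i(x_i + \Gamma_i^{-1} z_t)
		=
		\delta_i e
		+
		\sum_{j = 1}^{n - 4} O\!\left(\delta_i^{\frac{n-2-(j+1)}{n-3}} \Gamma_i^{-j}\abs{z_t}^j\right)
		+
		O\!\left(\delta_i \Gamma_i^{-(n-3)}\abs{z_t}^{n-3}\right).
\end{equation*}
Since $\abs{z_t} \le C\abs{y}$ on $\Sigma_\lambda$ and $\Gamma_i^{-1} = \ell_i^{-1}\delta_i^{1/(n-3)}$, each error term of order $j$ has size $C\delta_i \ell_i^{-j}\abs y^j$ after collecting the powers of $\delta_i$ — this is precisely the mechanism that produces the sum $\sum_{j=0}^{n-3}\ell_i^{-j}\abs y^j$ in the second inequality of the lemma. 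Multiplying by $\abs{y^\lambda - y}$, noting $\abs{y^\lambda - y} = \abs{y}(1 - \lambda^2/\abs y^2) \le 2(\abs y - \lambda)$ for $\abs y \ge \lambda$, and integrating in $t$ yields the global bound
\begin{equation*}
	\abs{H_i(y^\lambda - Re) - H_i(y - Re)}
		\le
		C_2 \Gamma_i^{-1}\delta_i (\abs y - \lambda)\sum_{j=0}^{n-3}\ell_i^{-j}\abs y^j,
		\qquad y\in\Sigma_\lambda,
\end{equation*}
after also absorbing the $\Gamma_i^{-1}$ coming from $\Grad H_i = \Gamma_i^{-1}\Grad K_i$.

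For the sharp one-sided estimate on $\Omega_\lambda$, the plan is to isolate the leading term. On $\Omega_\lambda = \{y\in\Sigma_\lambda\cap B_{2\lambda}: y_1 > 2\abs{(y_2,\dots,y_n)}\}$ we have $\abs y \le 2\lambda \le C\ell_i$ (using $R\le\lambda$ and that $\lambda$ ranges in $[R, R+2]$ with $R$ fixed — here one checks $\abs y$ stays bounded by a fixed multiple of $R$, hence $\ell_i^{-j}\abs y^j \le C$), so the error sum is $O(1)$ relative to the leading term but with an extra factor that we must beat. The key point is that the leading contribution is $\delta_i\lb e, y^\lambda - y\rb$, and on $\Omega_\lambda$ the geometry forces $\lb e, y^\lambda - y\rb = y_1^\lambda - y_1 \le -c(\abs y - \lambda)$ for a fixed $c>0$: indeed $y^\lambda$ lies inside $B_\lambda$ with $y_1^\lambda < \lambda$ while the cone condition $y_1 > 2\abs{(y_2,\dots,y_n)}$ together with $\abs y \ge \lambda$ gives $y_1 \ge \lambda/\sqrt{1 + 1/4} \cdot (\abs y/\lambda) \ge \lambda$ up to a fixed constant... more carefully, $y_1^\lambda - y_1 = \lambda^2 y_1/\abs y^2 - y_1 = -y_1(\abs y^2 - \lambda^2)/\abs y^2 \le -c(\abs y - \lambda)$ since $y_1 \ge c'\abs y$ on the cone and $\abs y + \lambda \ge \lambda \ge c''\abs y$ on $B_{2\lambda}$. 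Thus the leading term is $\le -c\delta_i(\abs y - \lambda)$, and for $\ell_i$ large the error terms (each carrying a factor $\ell_i^{-1}$ or smaller, since $j\ge 1$ there and $\abs y^j\ell_i^{-j} \le C\ell_i^{-1}\cdot\ell_i^{j-1}\abs y^j/\ell_i^{j-1}$... one must be slightly careful, but the point is $\abs y \le 2\lambda\le C R$ is fixed-bounded whereas the $j=0$ term does not appear in the error because $\Grad K_i(x_i) = \delta_i e$ is exactly the leading term) are dominated. Hence
\begin{equation*}
	H_i(y^\lambda - Re) - H_i(y - Re) \le -C_1\Gamma_i^{-1}\delta_i(\abs y - \lambda),
	\qquad y\in\Omega_\lambda.
\end{equation*}

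\emph{Main obstacle.} The delicate point is the bookkeeping of powers of $\delta_i$ in the Taylor remainder: one must verify that the flatness hypothesis \eqref{eq:CLassumption} is strong enough that, after the substitution $\Gamma_i^{-1} = \ell_i^{-1}\delta_i^{1/(n-3)}$, every intermediate term of the Taylor expansion of order $j$ ($2\le j\le n-2$) collapses exactly to $C\delta_i\ell_i^{-j}\abs y^j$ with no leftover positive power of $\delta_i$ that would weaken the estimate, and no negative power that would break it. The exponent arithmetic is $\frac{n-2-j}{n-3} + \frac{j}{n-3} = \frac{n-2}{n-3}$, which is off by $\frac{1}{n-3}$ from the desired exponent $1$ on $\delta_i$; this discrepancy must be reconciled — it is absorbed because the $j$-th derivative is contracted against $(y^\lambda - y)^{\otimes ?}$... in fact the resolution is that one expands $\Grad K_i$ (not $K_i$), so derivative order $j$ in the lemma corresponds to $\Grad^{j+1}K_i$, giving exponent $\frac{n-2-(j+1)}{n-3} + \frac{j}{n-3} = \frac{n-3}{n-3} = 1$ exactly. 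Getting this indexing exactly right, and handling the top-order remainder term (order $j = n-3$ in $\Grad K_i$, i.e. $\Grad^{n-2}K_i$, which by \ref{item:K1}--\ref{item:K2} is merely bounded, contributing the $j = n-3$ term $\delta_i\ell_i^{-(n-3)}\abs y^{n-3}$), is the technical heart of the proof; everything else is the same cone geometry already used in Lemma \ref{lem:Scalar-Curvature-Function-Estimates}.
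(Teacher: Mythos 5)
Your proof is correct and takes essentially the same route the paper intends (its proof is the one-line remark that the lemma follows from Taylor's theorem and the assumptions on $K$): Taylor expansion centered at $x_i$, the flatness condition \ref{item:K1} applied to the derivatives at $x_i$, the identity $\Gamma_i^{-1}=\delta_i^{1/(n-3)}\ell_i^{-1}$ to collapse each order-$j$ term to $\delta_i\ell_i^{-j}\abs y^{j}$, and the cone geometry of $\Omega_\lambda$ for the signed estimate. Two cosmetic remarks: the intermediate-point bound $\abs{\Grad^{j}K_i(x_i+\Gamma_i^{-1}z_t)}\le C\delta_i^{(n-2-j)/(n-3)}$ does not follow from \ref{item:K1} (the gradient at a nearby point need not be $O(\delta_i)$), but it is also never needed since your expansion is about $x_i$ and only $\Grad^{n-2}K_i$ is evaluated at intermediate points, where boundedness suffices; and the displayed remainder should read $O\bigl(\Gamma_i^{-(n-3)}\abs{z_t}^{n-3}\bigr)=O\bigl(\delta_i\ell_i^{-(n-3)}\abs{z_t}^{n-3}\bigr)$, without the extra factor $\delta_i$, exactly as you state correctly in your final paragraph.
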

\begin{proof}
	The proof follows routinely from the assumptions on $K$ and Taylor's theorem.
\end{proof}
By Lemmas \ref{lem:Scalar-Curvature-Function-Fast-Estimates} and \ref{lem:vRi-Estimates} we obtain positive $\lambda$-independent constants $a_1$ and $a_2$ such that

\begin{equation*}
	Q_1^\lambda(y)
		\leq
		- a_1 \Gamma_i^{-1} \delta_i(\abs y - \lambda)
		(1 + \abs{y - \lambda e}^2)^{-\frac{n+2}{2}}
	\qquad
	y\in \Omega_\lambda
\end{equation*}
and

\begin{equation*}
	\abs{Q_1^\lambda(y)}
		\leq
		a_2 \Gamma_i^{-1}\delta_i(\abs y - \lambda)
		\sum_{j = 0}^{n - 3} \ell_i^{-j} \abs y^{j-2-n}
	\qquad
	y\in \Sigma_\lambda.
\end{equation*}

We are now ready to construct the test function $h^\lambda$. In this case, the construction of $h^\lambda$ is more delicate than in Subsection \ref{subsection:Grad-Ki-Vanishing-Ti-Unbounded}. Indeed, $h^\lambda$ as defined in \eqref{eq:Naive-hLambda-Definition} is not be guaranteed to be nonpositive. This creates extra terms in the interior equation for $w^\lambda + h^\lambda$ that must be controlled. To overcome this we use $Q_1^\lambda$ to construct a function $\Hat Q^\lambda$ and define $h^\lambda$ by integrating Green's function against $\Hat Q^\lambda$. The advantage of this definition is that $\Hat Q^\lambda$ will control both $Q_1^\lambda $ and the extra terms created by the possibility of $h^\lambda$ being positive. \\

To construct $\Hat Q^\lambda$, first define

\begin{equation*}
	\mathcal C_\lambda
		=
		\{
		y\in \Omega_\lambda \intersect B(0, 3\lambda/2):
		y_1>4\abs{(y_2, \cdots, y_n)}
		\}
\end{equation*}
and let $f^\lambda$ be any smooth function satisfying both

\begin{equation*}
	f^\lambda(y)
		=
		\left\{
			\begin{array}{ll}
				- \frac{a_1}{2} (1 + \abs{ y - \lambda e}^2)^{-(n+2)/2}
				&
				y\in \mathcal C_\lambda
				\\
				2 a_2 \sum_{j = 0}^{n - 3} \ell_i^{-j} \abs y^{j -2-n}
				&
				y\in \Sigma_\lambda \setminus \Omega_\lambda
			\end{array}
		\right.
\end{equation*}
and

\begin{equation*}
	-\frac 34 a_1(1 + \abs{ y - \lambda e}^2)^{-\frac{n + 2}{2}}
		\leq
		f^\lambda(y)
		\leq
		3a_2 \sum_{j = 0}^{n - 3} \ell_i^{-j} \abs y^{j - 2- n}
		\qquad
		y\in \overline \Omega_\lambda \setminus \mathcal C_\lambda.
\end{equation*}
Set

\begin{equation}\label{eq:QLambdaHat-Definition}
	\Hat Q^\lambda(y)
		=
		\Gamma_i^{-1}\delta_i (\abs y - \lambda) f^\lambda(y)
		\qquad
		y\in \Sigma_\lambda
\end{equation}
and observe that $\Hat Q^\lambda$ enjoys the estimates

\begin{equation}\label{eq:QLambdaHat-Upper-Bounds}
	\Hat Q^\lambda(y)
		\leq
		\left\{
			\begin{array}{ll}
				- \frac{a_1}{2} \Gamma_i^{-1} \delta_i
				(\abs y - \lambda)
				(1 + \abs{ y - \lambda e}^2)^{-(n+2)/2}
				&
				y\in \overline{\mathcal C}_\lambda
				\\
				3 a_2 \Gamma_i^{-1}\delta_i
				(\abs y - \lambda)
				\sum_{j = 0}^{n - 3} \ell_i^{-j} \abs y^{j - 2 -n}
				&
				y\in \overline \Sigma_\lambda \setminus \mathcal C_\lambda
			\end{array}
		\right.
\end{equation}
and

\begin{equation}\label{eq:Abs-QLambdaHat-Estimate}
	\abs{\Hat Q^\lambda(y)}
		\leq
			\left\{
				\begin{array}{ll}
					C\Gamma_i^{-1}\delta_i (\abs y- \lambda)
					&
					y\in \Sigma_\lambda \intersect B_{4\lambda}
					\\
					C\Gamma_i^{-1} \delta_i (\abs y - \lambda)
					\sum_{j = 0}^{n - 3} \ell_i^{-j} \abs y^{j -2-n}
					&
					y\in \Sigma_\lambda \setminus B_{4\lambda}.
				\end{array}
			\right.
\end{equation}
Moreover, we have

\begin{equation}\label{eq:QLambda-QLambdaHat}
	(Q_1^\lambda - \Hat Q^\lambda)(y)
		\leq
		\left\{
			\begin{array}{ll}
				- \frac{a_1}{4}\Gamma_i^{-1} \delta_i
				(\abs y - \lambda)
				(1 + \abs{y - \lambda e}^2)^{-(n+2)/2}
				&
				y\in \overline \Omega_\lambda
				\\
				- a_2 \Gamma_i^{-1}\delta_i
				(\abs y - \lambda)
				\sum_{j = 0}^{n - 3} \ell_i^{-j} \abs y^{j - 2- n}
				&
				y\in \overline \Sigma_\lambda \setminus \Omega_\lambda.
			\end{array}
		\right.
\end{equation}
Define

\begin{equation*}
	h^\lambda(y)
		=
		\int_{\Sigma_\lambda} G(y, \eta) \Hat Q^\lambda(\eta)\; d\eta.
\end{equation*}
By construction, $h^\lambda$ satisfies

\begin{equation}\label{eq:Fast-hLambda-Equations}
	\left\{
		\begin{array}{ll}
			- \lap h^\lambda(y) = \Hat Q^\lambda(y)
			&
			y\in \Sigma_\lambda
			\\
			h^\lambda(y) = 0
			&
			y\in \bdy B_\lambda
			\\
			\frac{\partial h^\lambda}{\partial y_n}(y)
				=
				\int_{\Sigma_\lambda}
					\frac{\partial G}{\partial y_n}(y, \eta)
					\Hat Q^\lambda (\eta)
				\; d\eta
			&
			y\in \bdy'\Sigma_\lambda.
		\end{array}
	\right.
\end{equation}
The next lemma provides useful estimates for $h^\lambda$.

\begin{lem}\label{lem:Fast-hLambda-Estimates}
	If $R$ and $i$ are sufficiently large, then there are positive constants $C_1$ and $C_2$ such that both
	\begin{equation*}
		h^\lambda(y)
			\leq
			\left\{
				\begin{array}{ll}
					- C_1 \Gamma_i^{-1} \delta_i
					(\abs y - \lambda) \lambda^{-n} \log \lambda
					&
					y\in \overline B_{4\lambda}
					\\
					C_2 \Gamma_i^{-1} \delta_i \abs y^{2-n}
					\left(
						\ell_i^{-1} \log\frac{ \abs y }{\lambda}
						+ \sum_{j = 2}^{n - 3} \ell_i^{-j} \abs y^{j - 1}
					\right)
					&
					y\in \overline \Sigma_\lambda \setminus B_{4\lambda}
				\end{array}
			\right.
	\end{equation*}
	and
	
	\begin{eqnarray}\label{eq:Fast-Abs-hLambda-Estimate-Ti-Unbounded}
		\abs{h^\lambda(y) }
			& \leq &
			\left\{
				\begin{array}{ll}
					C_2\Gamma_i^{-1} \delta_i\lambda^2 (\abs y - \lambda)
					&
					y\in \overline \Sigma_\lambda \intersect \overline B_{4\lambda}
					\\
					C_2\Gamma_i^{-1} \delta_i \abs y^{2-n}
					\left(
						\lambda^{1+n}  + \ell_i^{-1}\log \frac{\abs y}{\lambda}
						+
						\sum_{j = 2}^{n -3} \ell_i^{-j} \abs y^{j -1}
					\right)
					&
					y\in \overline \Sigma_\lambda \setminus B_{4\lambda}
				\end{array}
			\right.
			\notag
			\\
			& = &  \circ(1) \abs y^{2-n}.
	\end{eqnarray}
\end{lem}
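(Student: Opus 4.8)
The plan is to follow the proof of Lemma~\ref{lem:hLambda-Estimates-Ti-Unbounded} essentially line for line, using $\Hat Q^\lambda$ in place of $Q_1^\lambda$ and exploiting the sign information recorded in \eqref{eq:QLambdaHat-Upper-Bounds} and \eqref{eq:Abs-QLambdaHat-Estimate}. Write $h^\lambda = J_1 + J_2$ with
\begin{equation*}
	J_1(y) = \int_{\mathcal C_\lambda} G(y,\eta)\, \Hat Q^\lambda(\eta)\; d\eta,
	\qquad
	J_2(y) = \int_{\Sigma_\lambda \setminus \mathcal C_\lambda} G(y,\eta)\, \Hat Q^\lambda(\eta)\; d\eta,
\end{equation*}
so that $J_1$ carries the strictly negative main term while $J_2$ is a controlled error, and treat the regimes $y \in \overline\Sigma_\lambda \intersect \overline B_{4\lambda}$ and $y \in \overline\Sigma_\lambda \setminus B_{4\lambda}$ separately, exactly as in the earlier lemma.

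For $J_1$, the first line of \eqref{eq:QLambdaHat-Upper-Bounds} bounds $\Hat Q^\lambda$ above on $\overline{\mathcal C}_\lambda$ by $-\tfrac{a_1}{2}\Gamma_i^{-1}\delta_i(\abs\eta - \lambda)(1 + \abs{\eta - \lambda e}^2)^{-(n+2)/2}$. Feeding this into the lower Green's bounds \eqref{eq:GLowerBoundySmall} or \eqref{eq:GLowerBoundyLarge} together with the cone integral
\begin{equation*}
	\int_{\mathcal C_\lambda} \frac{(\abs\eta - \lambda)^2}{(1 + \abs{\eta - \lambda e}^2)^{(n+2)/2}}\; d\eta \geq C\log\lambda,
\end{equation*}
valid because $\mathcal C_\lambda$ is a fixed-aperture cone about the $y_1$-axis truncated at radius $3\lambda/2$ (the same computation as in Lemma~\ref{lem:hLambda-Estimates-Ti-Unbounded}), produces $J_1(y) \leq -C\Gamma_i^{-1}\delta_i(\abs y - \lambda)\lambda^{-n}\log\lambda$ on $\overline B_{4\lambda}$ and $J_1(y) \leq -C\Gamma_i^{-1}\delta_i \abs y^{2-n}\lambda^{-1}\log\lambda$ off $B_{4\lambda}$; the matching bounds $\abs{J_1(y)} \leq C\Gamma_i^{-1}\delta_i\lambda^2(\abs y - \lambda)$ and $\abs{J_1(y)} \leq C\Gamma_i^{-1}\delta_i\abs y^{2-n}\lambda^{n+1}$ follow from $\abs{\Hat Q^\lambda(\eta)} \leq C\Gamma_i^{-1}\delta_i(\abs\eta - \lambda)$ on $\mathcal C_\lambda \subset B_{3\lambda/2}$ and the upper Green's bounds \eqref{eq:GUpperBoundySmall}, \eqref{eq:GUpperBoundyLarge}.

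For $J_2$, use the second line of \eqref{eq:QLambdaHat-Upper-Bounds} (respectively \eqref{eq:Abs-QLambdaHat-Estimate}) to control $\Hat Q^\lambda$ by $C\Gamma_i^{-1}\delta_i(\abs\eta - \lambda)\sum_{j=0}^{n-3}\ell_i^{-j}\abs\eta^{j-2-n}$ on $\overline\Sigma_\lambda\setminus\mathcal C_\lambda$, decompose $\Sigma_\lambda\setminus\mathcal C_\lambda$ into the three regions of \eqref{eq:I2-Estimate-Partition} in the near regime or the four regions of \eqref{eq:I2-Estimate-Partition-y-Large} in the far regime, apply Lemma~\ref{lem:GreensEstimates}, and carry out the same routine integral estimates as in Lemma~\ref{lem:hLambda-Estimates-Ti-Unbounded}, now summed over $j$. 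In the near regime each of the finitely many $j$-terms is $\leq C\Gamma_i^{-1}\delta_i(\abs y - \lambda)\lambda^{-n}$ (since $\lambda \simeq R$ is fixed and $\ell_i \to \infty$, so $\ell_i^{-j}\lambda^{j} \to 0$ for $j \geq 1$), hence is absorbed by the $\log\lambda$ gain of $J_1$ once $R$ is large. In the far regime the $j = 0,1$ terms integrate to $\ell_i^{-1}\abs y^{2-n}\log(\abs y/\lambda)$ and the $j \geq 2$ terms to $\ell_i^{-j}\abs y^{j-1-n}$, which are the asserted contributions; adding $J_1$ gives both displayed inequalities, the second upon noting the only negative term is the $\mathcal C_\lambda$ integral.

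Finally $h^\lambda(y) = \circ(1)\abs y^{2-n}$ is obtained by testing each term of the $\abs{h^\lambda}$-bound against $\abs y^{2-n}$: on $\overline B_{4\lambda}$, $\Gamma_i^{-1}\delta_i\lambda^2(\abs y - \lambda)\cdot\abs y^{n-2} \leq C\Gamma_i^{-1}\delta_i R^{n+1} \to 0$ by Proposition~\ref{lem:Vanishing-Ti-Unbounded}, while off $B_{4\lambda}$ one uses $\lambda \simeq R$ fixed, $\abs y \leq \epsilon_i\Gamma_i$, $\ell_i \leq C\Gamma_i$ and $\ell_i \to \infty$ together with $\delta_i = (\ell_i/\Gamma_i)^{n-3}$ to get $\Gamma_i^{-1}\delta_i\lambda^{n+1} \to 0$, $\Gamma_i^{-1}\delta_i\ell_i^{-1}\log(\abs y/\lambda) \leq C\Gamma_i^{-2}\log\Gamma_i \to 0$, and $\Gamma_i^{-1}\delta_i\ell_i^{-j}\abs y^{j-1} \leq C\epsilon_i^{j-1}\Gamma_i^{-2} \to 0$ for $2 \leq j \leq n-3$. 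The main obstacle is precisely this bookkeeping introduced by the sum $\sum_{j=0}^{n-3}\ell_i^{-j}\abs\eta^j$ coming from the Taylor expansion in Lemma~\ref{lem:Scalar-Curvature-Function-Fast-Estimates}: each power of $\abs\eta$ must be tracked through the region-by-region estimates and then shown, via the relation between $\ell_i$, $\delta_i$ and $\Gamma_i$, not to destroy the $\circ(1)\abs y^{2-n}$ decay. The Green's-function inputs, region decompositions and the start/continuation of the moving-sphere process are otherwise verbatim repetitions of the corresponding steps in Subsection~\ref{subsection:Grad-Ki-Vanishing-Ti-Unbounded}.
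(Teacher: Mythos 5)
Your proposal is correct and follows essentially the same route as the paper's own proof: the identical splitting of $h^\lambda$ into the integral over $\mathcal C_\lambda$ and its complement, the same two regimes $\overline B_{4\lambda}$ versus $\overline\Sigma_\lambda\setminus B_{4\lambda}$, the same Green's-function bounds with the partitions \eqref{eq:I2-Estimate-Partition} and \eqref{eq:I2-Estimate-Partition-y-Large}, and the same absorption of the error terms after taking $R$ large. Your extra bookkeeping for the $\circ(1)\abs y^{2-n}$ decay via $\ell_i=\delta_i^{1/(n-3)}\Gamma_i$ is consistent with what the paper leaves implicit.
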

%
	\begin{proof}
	Write
	\begin{equation*}
		h^\lambda(y) = I_1(y) + I_2(y),
	\end{equation*}
	with
	\begin{equation}\label{eq:Fast-I1-I2-Definitions-Ti-Unbounded}
		I_1(y)
		=
		\int_{\mathcal C_\lambda}G(y, \eta) \widehat Q^\lambda(\eta) \; d\eta
		\qquad
		\text{ and }
		\qquad
		I_2(y)
		=
		\int_{\Sigma_\lambda \setminus \mathcal C_\lambda}
		G(y, \eta) \widehat Q^\lambda(\eta) \; d\eta.
	\end{equation}
	We consider separately the case $y\in \overline \Sigma_\lambda\intersect\overline B_{4\lambda}$ and the case $y\in \overline \Sigma_\lambda\setminus B_{4\lambda}$. \\
	\emph{Case 1:} $y\in \overline \Sigma_\lambda\intersect\overline B_{4\lambda}$.\\
	In this case, using the estimates for $\widehat Q^\lambda$ in \eqref{eq:QLambdaHat-Upper-Bounds} and the estimates for $G$ in \eqref{eq:GLowerBoundySmall} estimating similarly to \eqref{eq:I1EstimateySmall}  we obtain
	
	\begin{eqnarray}\label{eq:Fast-I1-Estimate-ySmall}
		I_1(y)
			& \leq & - C \Gamma_i^{-1} \delta_i (\abs y - \lambda)\lambda^{-n} \log\lambda.
	\end{eqnarray}
	To estimate $I_2(y)$, let $A_1, A_2$ and $A_3$ be as in \eqref{eq:I2-Estimate-Partition} and write $I_2(y) = \sum_{k = 1}^3 I_2^{k}(y)$,  where
	
	\begin{equation*}
		I_2^k(y)
			=
			\int_{A_k \setminus \mathcal C_\lambda}
			G(y, \eta) \widehat Q^\lambda(\eta) \; d\eta
			\\
	\end{equation*}
	Performing routine integral estimates using $\ell_i\to\infty$ and Lemma \ref{lem:GreensEstimates} yields
	
	\begin{equation*}
		\abs{I_2^k(y)}
			\leq
			C\Gamma_i^{-1} \delta_i(\abs y - \lambda) \lambda^{-n}
		\qquad
		k = 1,2,3.
	\end{equation*}
	Combining this with the estimate for $I_1(y)$ given in \eqref{eq:Fast-I1-Estimate-ySmall} and choosing $R$ sufficiently large we obtain
	
	\begin{equation*}
		h^\lambda(y)
			\leq
			- C \Gamma_i^{-1} \delta_i(\abs y - \lambda) \lambda^{-1} \log \lambda
			\qquad
			y\in \overline\Sigma_\lambda \intersect \overline B_{4\lambda}.
	\end{equation*}
	To show \eqref{eq:Fast-Abs-hLambda-Estimate-Ti-Unbounded} for $y\in \overline \Sigma_\lambda \intersect \overline B_{4\lambda}$ we only need to estimate $\abs{I_1(y)}$. Using \eqref{eq:Abs-QLambdaHat-Estimate} and the estimates for $G(y, \eta)$ in Lemma \ref{lem:GreensEstimates}, we have
		
	\begin{eqnarray}\label{eq:Absh1EstimateFastSmally}
		\abs{ I_1(y)}
			& \leq &
			\int_{\mathcal C_\lambda}G(y, \eta)
			\abs{ \widehat Q^\lambda(\eta)}\; d\eta
			\notag
			\\
			&\leq &
			C\Gamma_i^{-1}\delta_i
			\left(
			\lambda \int_{A_1}\abs{ y - \eta}^{2-n}\; d\eta
			+ \int_{A_2}
			\frac{(\abs y - \lambda)(\abs \eta^2 - \lambda^2)}{\lambda\abs{ y - \eta}^n}
			(\abs \eta - \lambda)\; d\eta
			\right)\notag
			\\
			& \leq &
			C\Gamma_i^{-1} \delta_i(\abs y - \lambda) \lambda^2.
	\end{eqnarray}
	\emph{Case 2:} $y\in \overline \Sigma_\lambda \setminus B_{4\lambda}$. \\
	By \eqref{eq:QLambdaHat-Upper-Bounds} and \eqref{eq:GLowerBoundyLarge} we have
	
	\begin{eqnarray*}
		I_1(y)
			& \leq &
			- C \Gamma_i^{-1} \delta_i\abs y ^{2-n} \lambda^{-1}\log \lambda.
	\end{eqnarray*}
	To estimate $I_2(y)$, let $D_1, D_2, D_3$ and $D_4$ be as in \eqref{eq:I2-Estimate-Partition-y-Large} and let $I_2^{k}(y) = \int_{D_k \setminus \mathcal C_\lambda} G(y, \eta)\widehat Q^\lambda (\eta)\; d\eta$ so that $I_2(y) = \sum_{k = 1}^4 I_2^k(y)$. For each $k = 1, \cdots, 4$ we use both \eqref{eq:Abs-QLambdaHat-Estimate} and \eqref{eq:GUpperBoundyLarge} to estimate $I_2^k(y)$. For $k = 1$ we have
	
	\begin{eqnarray*}
		\abs{I_2^{1}(y)}
			& \leq &
			C\Gamma_i^{-1} \delta_i \int_{D_1} G(y, \eta)
			\sum_{j = 0}^{n -3} \ell_i^{-j}\abs \eta^{j -1-n}\; d\eta
			\\
			& \leq &
			C\Gamma_i^{-1} \delta_i \abs y^{2-n}
			 \left(
			 \lambda^{-1} + \ell_i^{-1}\log\frac{\abs y}{\lambda}
			 +  \sum_{j = 2}^{n - 3} \ell_i^{-j}\abs y^{j-1}
			 \right).\
	\end{eqnarray*}
	For $k = 2,3,4$, the integrals $I_2^k$ are minor. After performing routine integral estimates we have
	
	\begin{equation*}
		\abs{I_2^k(y)}
			\leq
			C\Gamma_i^{-1} \delta_i
			\abs y^{2-n} \sum_{j = 0}^{n - 3} \ell_i^{-j} \abs y^{j - 1}.
			\qquad
			k = 2,3,4.
	\end{equation*}
	Combining the estimates for $I_2^{k}(y)$ , $k = 1, \cdots, 4$, we get
	
	\begin{equation}\label{eq:Fast-Abs-I2-Estimate-Ti-Unbounded}
		\abs{I_2(y) }
			\leq
			C\Gamma_i^{-1}\delta_i\abs y^{2-n}
			\left(
				\lambda^{-1}+ \ell_i^{-1}\log \abs y
				+ \sum_{j = 2}^{n - 3} \ell_i^{- j} \abs y^{j - 1}
			\right)
			=
			\circ(1) \Gamma_i^{-1} \abs y^{2-n}.
	\end{equation}
	Combining the estimates for $I_1$ and $I_2$ we obtain a positive constant $C$ such that for $R$ sufficiently large
	
	\begin{equation}\label{eq:Fast-hLambda-Upper-Bound-y-Large}
		h^\lambda(y)
			\leq
			C \Gamma_i^{-1}\delta_i \abs y^{2-n}
			\left(
				\ell_i^{-1}\log \abs y
				+ \sum_{j = 2}^{n -3} \ell_i^{-j}\abs y^{j -1}
			\right).
	\end{equation}
	Notice in particular that $h^\lambda(y)$ need not be negative.\\
	
	To show \eqref{eq:Fast-Abs-hLambda-Estimate-Ti-Unbounded}, by \eqref{eq:Fast-Abs-I2-Estimate-Ti-Unbounded}, we only need to estimate $\abs{ I_1(y)}$. By \eqref{eq:Abs-QLambdaHat-Estimate} and since $G(y, \eta)\leq C\abs{y - \eta}^{2-n}$ in $\mathcal C_\lambda$ we have
	
	\begin{eqnarray*}
		\abs{I_1(y)}
			& \leq&
			\int_{\mathcal C_\lambda} G(y, \eta) \abs{\widehat Q^\lambda(\eta)}\; d\eta
			\\
			&\leq &
			C\Gamma_i^{-1} \delta_i\abs y^{2-n} \lambda^{1 + n}.
	\end{eqnarray*}
	Lemma \ref{lem:Fast-hLambda-Estimates} is established.
\end{proof}
\begin{lem}\label{lem:Fast-hLambda-Boundary-Derivative-Estimate}
	The test function satisfies the estimate
	
	\begin{equation*}
		\frac{\partial h^\lambda}{\partial y_n}(y)=
			\circ(1) \abs y^{-n}
			\qquad
			y\in \bdy'\Sigma_\lambda.
	\end{equation*}
\end{lem}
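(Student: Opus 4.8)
The plan is to argue exactly as in the proof of Lemma~\ref{lem: hLambda-Boundary-Derivative-Estimate}, now with $\Hat Q^\lambda$ in the role of $Q_1^\lambda$. By the last equation in \eqref{eq:Fast-hLambda-Equations}, for $y\in\bdy'\Sigma_\lambda$ we have
\[
	\frac{\partial h^\lambda}{\partial y_n}(y)
		=
		\int_{\Sigma_\lambda}\frac{\partial G}{\partial y_n}(y,\eta)\,\Hat Q^\lambda(\eta)\,d\eta,
\]
and the same direct computation as in Lemma~\ref{lem: hLambda-Boundary-Derivative-Estimate} gives, for $y\in\bdy'\Sigma_\lambda$ (so $y_n=-T_i$),
\[
	\sigma_n\frac{\partial G}{\partial y_n}(y,\eta)
		=
		\frac{\eta_n-y_n}{\abs{y-\eta}^n}
		-\left(\frac{\lambda}{\abs y}\right)^n\abs{y^\lambda-\eta}^{-n}
		\left(T_i\left(\frac{\abs\eta}{\lambda}\right)^2+\eta_n\right).
\]
The ingredients I would keep at hand are: for $y\in\bdy'\Sigma_\lambda$ one has $T_i=\abs{y_n}\le\abs y\le\epsilon_i\Gamma_i$, with $\epsilon_i\to 0$ and $T_i\to\infty$; since $\Sigma_\lambda\subset B(0,\epsilon_i\Gamma_i)$ we have $\abs{\eta_n}\le\abs\eta\le\epsilon_i\Gamma_i$; moreover $\delta_i\le\Lambda$ by \ref{item:K2}, $\ell_i=\delta_i^{1/(n-3)}\Gamma_i\to\infty$, and $\lambda\in[\lambda_0,\lambda_1]=[R,R+2]$ is of fixed size; finally the size of $\Hat Q^\lambda$ is governed by the two-regime bound \eqref{eq:Abs-QLambdaHat-Estimate}.

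Next I would partition $\Sigma_\lambda$ into the sets $D_1,\dots,D_4$ of \eqref{eq:I2-Estimate-Partition-y-Large} (admissible since $\abs y\ge T_i$ is large) and, inside each $D_k$, split further along $\bdy B_{4\lambda}$ so that the relevant line of \eqref{eq:Abs-QLambdaHat-Estimate} applies. On the part with $\abs\eta\le 4\lambda$ (which, for $i$ large, is contained in $D_1$ because $\abs y\ge T_i\to\infty$) one uses $\abs{\Hat Q^\lambda(\eta)}\le C\Gamma_i^{-1}\delta_i(\abs\eta-\lambda)$ together with $\abs{y-\eta}\ge\abs y/2$, $\abs{y^\lambda-\eta}\ge\abs\eta/2$ (valid since $\abs{y^\lambda}=\lambda^2/\abs y\le\lambda^2/T_i\to 0$) and the crude bounds $\abs{\eta_n-y_n}\le C\epsilon_i\Gamma_i$, $T_i(\abs\eta/\lambda)^2+\eta_n\le C\epsilon_i\Gamma_i$; keeping the prefactor $(\lambda/\abs y)^n=\lambda^n\abs y^{-n}$ intact and integrating over the shell $\lambda\le\abs\eta\le 4\lambda$, this contribution is $\le C\epsilon_i\delta_i\lambda^{n+1}\abs y^{-n}=\circ(1)\abs y^{-n}$. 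On the part with $\abs\eta>4\lambda$ one inserts the second line of \eqref{eq:Abs-QLambdaHat-Estimate}, the kernel bounds of Lemma~\ref{lem:GreensEstimates}, and the fact that every exponent $j-2-n$ ($0\le j\le n-3$) is negative, so the weight $\sum_{j=0}^{n-3}\ell_i^{-j}\abs\eta^{j-2-n}$ only improves the decay of $\Hat Q^\lambda$; a routine computation using $\ell_i\to\infty$, $\Gamma_i^{-(n-3)}=\delta_i\ell_i^{-(n-3)}$, $\abs\eta\le\epsilon_i\Gamma_i$ and $n\ge 4$ shows this contribution is also $\circ(1)\abs y^{-n}$, and summing over $k=1,\dots,4$ yields the lemma.

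The one point that needs care — exactly as in Lemma~\ref{lem: hLambda-Boundary-Derivative-Estimate} — is the term $(\lambda/\abs y)^n\abs{y^\lambda-\eta}^{-n}\,T_i(\abs\eta/\lambda)^2$ of the kernel, since $T_i\to\infty$. The correct bookkeeping is to retain the full factor $\abs y^{-n}$ (rather than estimating $\abs y^{-n}\le T_i^{-n}$) and to absorb the growing quantity $T_i(\abs\eta/\lambda)^2$ via $T_i\le\epsilon_i\Gamma_i$ — so that it is dominated by $C\epsilon_i\Gamma_i$ on $B_{4\lambda}$ and by the $\abs\eta^{-5}$-type decay of the far-field weights off $B_{4\lambda}$ — the residual smallness then coming from $\epsilon_i\to 0$ and $\Gamma_i^{-1}\to 0$. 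The new feature relative to Subsection~\ref{subsection:Grad-Ki-Vanishing-Ti-Unbounded}, namely the polynomial-in-$\ell_i^{-1}$ weights in \eqref{eq:Abs-QLambdaHat-Estimate}, is harmless because $\ell_i\to\infty$ and those weights decay at infinity.
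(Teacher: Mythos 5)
Your proposal is correct and follows essentially the same route as the paper: the paper's proof likewise combines \eqref{eq:Abs-QLambdaHat-Estimate} with $\delta_i=\circ(1)$ and $\abs y\leq \epsilon_i\Gamma_i$ (condensing these into $\abs{\Hat Q^\lambda(y)}=\circ(1)\Gamma_i^{-1}\abs y^{-1-n}$) and then repeats the argument of Lemma \ref{lem: hLambda-Boundary-Derivative-Estimate} with the same kernel formula and the partition \eqref{eq:I2-Estimate-Partition-y-Large}. Your only deviation is cosmetic --- you carry the two-regime bound for $\Hat Q^\lambda$ through the partition directly rather than first unifying it, and your citation of Lemma \ref{lem:GreensEstimates} should really be to the explicit formula for $\partial G/\partial y_n$ (which you already wrote down), since that lemma only bounds $G$ itself.
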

\begin{proof}
	Use \eqref{eq:Abs-QLambdaHat-Estimate}, $\delta_i = \circ(1)$ and $\abs y\leq \epsilon_i\Gamma_i$ to obtain $\abs{ \Hat Q^\lambda(y) } = \circ(1) \Gamma_i^{-1} \abs y^{-1-n}$ for $y\in \Sigma_\lambda$. Now proceed as in the proof of Lemma \ref{lem: hLambda-Boundary-Derivative-Estimate}.
\end{proof}
\begin{proof}[Proof of Proposition \ref{lem:Fast-Vanishing-Ti-Unbounded}]
By \eqref{eq:Fast-hLambda-Equations}, \eqref{eq:QLambda-QLambdaHat} and Lemmas \ref{lem:Mean-Value-Coefficient-Estimates-Ti-Unbounded} and \ref{lem:Fast-hLambda-Estimates}, we have after increasing $R$ if necessary and for $i$ large

\begin{equation*}
	L_i(w^\lambda + h^\lambda)(y)
		=
		(Q_1^\lambda - \Hat Q^\lambda)(y)
		+ H_i(y - Re)\xi_1(y) h^\lambda(y)
		\leq
		0
		\qquad
		y\in \Sigma_\lambda\intersect \mathcal O_\lambda.
\end{equation*}
Moreover, by lemmas \ref{lem:Mean-Value-Coefficient-Estimates-Ti-Unbounded}, \ref{lem:Q2Lambda-Estimate}, \ref{lem:Fast-hLambda-Estimates}, and \ref{lem:Fast-hLambda-Boundary-Derivative-Estimate} we obtain

\begin{equation*}
	B_i(w^\lambda + h^\lambda)
		\leq
		0
		\qquad
		y\in \bdy'\Sigma_\lambda.
\end{equation*}
Arguing similarly to the proof of Lemma \ref{lem:Moving-Spheres-Can-Start} we see that the moving sphere process can start at $\lambda = \lambda_0$, then arguing similarly to the proof of Proposition \ref{lem:Vanishing-Ti-Unbounded} we obtain a contradiction to $\ell_i\to\infty$. Proposition \ref{lem:Fast-Vanishing-Ti-Unbounded} is established.
\end{proof}

%
%
\subsection{Completion of the Proof of Theorem \ref{prop:Ti-Unbounded}}
\label{subsection:Proof-of-Proposition}
With a rapid vanishing rate for $\delta_i$ in hand, we are ready to prove Theorem \ref{prop:Ti-Unbounded}.

\begin{proof}[Proof of Theorem \ref{prop:Ti-Unbounded}]

In this proof we consider the functions $v_i$, $U$ (not shifted by $Re$) as well as their Kelvin inversions

\begin{equation*}
	v_i^\lambda(y)
		=
		\left(\frac{\lambda}{\abs y}\right)^{n - 2}
		v_i(y^\lambda)
	\qquad
	\text{ and }
	\qquad
	U^\lambda(y)
		=
		\left(\frac{\lambda}{\abs y}\right)^{n - 2}
		U(y^\lambda)
\end{equation*}
for $y\in \Sigma_\lambda = \Omega_i\setminus \overline B_\lambda$. In this case, with $\lambda^* = 1$ direct computation yields

\begin{equation*}
	\left\{
		\begin{array}{lll}
			(U- U^\lambda)(y) >0
			&
			y\in \bb R^n \setminus \overline B_\lambda
			&
			\text{ if } \lambda <\lambda^*
			\\
			(U- U^\lambda)(y) <0
			&
			y\in \bb R^n \setminus \overline B_\lambda
			&
			\text{ if } \lambda >\lambda^*,
		\end{array}
	\right.
\end{equation*}
and we consider $\lambda$ between $\lambda_0 = 1/2$ and $\lambda_1 = 2$. Set

\begin{equation*}
	w^\lambda(y)
		=
		v_i(y) - v_i^\lambda(y)
		\qquad
		y\in \Sigma_\lambda.
\end{equation*}
Then $w^\lambda$ satisfies equations \eqref{eq:wLambda-Equations-Ti-Unbounded} - \eqref{eq:Interior-Error-Function} with $R= 0$. We still need to construct a test function $h^\lambda$ such that \eqref{eq:Perturbation-Test-Function} and \eqref{eq:Desired-Differential-Inequalities} hold. Because of the rapid vanishing rate of $\delta_i$, the construction will be simple. \\

By an application of Taylor's Theorem, assumption \ref{item:K1} and Proposition \ref{lem:Fast-Vanishing-Ti-Unbounded}, we have
	
	\begin{equation}\label{eq:HiLambda-Hi-Estimate-R=0}
		\abs{H_i(y^\lambda) - H_i(y)}
			\leq
			C\Gamma_i^{2-n} \abs y^{n - 2}
			\qquad
			y\in \Sigma_\lambda.
	\end{equation}
Since $\lambda\leq 2$, using the convergence of $v_i^\lambda$ to $U^\lambda$ and the properties of $U^\lambda$, we have

\begin{equation}\label{eq:viLambda-Upper-Bound}
	v_i^\lambda(y)
		\leq
		C\abs y^{2-n}
		\qquad
		y\in \Sigma_\lambda.
\end{equation}
Using this and \eqref{eq:HiLambda-Hi-Estimate-R=0} in the expression of $Q_1^\lambda$ we have

\begin{equation}\label{eq:Final-Q1Lambda-Estimate}
	Q_1^\lambda(y)
		\leq
		C\Gamma_i^{2-n} \abs y^{-4}
		\qquad
		y\in \Sigma_\lambda.
\end{equation}
Moreover, as in Lemma \ref{lem:Q2Lambda-Estimate} with $R = 0$ and $1/2\leq \lambda \leq 2$, we have

\begin{equation}\label{eq:Final-Q2Lambda-Estimate}
	Q_2^\lambda(y)
		\leq
		- CT_i \abs y^{-n}
		\qquad
		y\in \bdy'\Sigma_\lambda.
\end{equation}
Set

\begin{equation*}
	h^\lambda(y)
		=
		-a \Gamma_i^{2-n}(\lambda^{-1} - \abs y^{-1})
		\leq
		0,
		\qquad
		y\in \overline \Sigma_\lambda,
\end{equation*}
where $a>0$ is to be determined. By direct computation and since $\Sigma_\lambda \subset B(0, \epsilon_i\Gamma_i)$, $h^\lambda$ is seen to satisfy

\begin{equation}\label{eq:Final-hLambda-Equations}
	\left\{
		\begin{array}{ll}
			\lap h^\lambda(y)
				\leq - a \Gamma_i^{2-n}\abs y^{-3}
				&
				y\in \Sigma_\lambda
				\\
			h^\lambda(y)
				=
				0
				&
				y\in \bdy B_\lambda
				\\
			h^\lambda(y) = \circ(1) \abs y^{2-n}
				&
				y\in \Sigma_\lambda
				\\
			\frac{\partial h^\lambda}{\partial y_n}(y)
				= a T_i \Gamma_i^{2-n}\abs y^{-3}
				= \circ(1)\abs y^{-n}
				&
				y\in \bdy'\Sigma_\lambda.
		\end{array}
	\right.
\end{equation}
Combining \eqref{eq:Final-Q1Lambda-Estimate} and \eqref{eq:Final-hLambda-Equations}, after choosing $a$ sufficiently large we obtain

\begin{equation*}
	L_i(w^\lambda + h^\lambda)(y)
		\leq
		0
		\qquad
		y\in \Sigma_\lambda.
\end{equation*}
Moreover, by \eqref{eq:Final-Q2Lambda-Estimate}, Lemma \ref{lem:Mean-Value-Coefficient-Estimates-Ti-Unbounded} and \eqref{eq:Final-hLambda-Equations} we have

\begin{equation*}
	B_i(w^\lambda + h^\lambda)(y)
		\leq
		0,
		\qquad
		y\in \bdy'\Sigma_\lambda \intersect \mathcal O_\lambda,
\end{equation*}
where in this case $\mathcal O_\lambda$ is as in \eqref{eq:OLambda-Definition} with $R = 0$. Arguing similarly to the proof of Lemma \ref{lem:Moving-Spheres-Can-Start} shows that the moving sphere process can start at $\lambda_0 = 1/2$,then arguing as in the proof of Proposition \ref{lem:Vanishing-Ti-Unbounded} yields a contradiction. Theorem \ref{prop:Ti-Unbounded} is established.
\end{proof}

\section{Proof of Theorem \ref{thm:Main}}
	\label{section:Proof of Theorem}

In this section we prove the Harnack-type inequality. The proof is similar to the proof of Theorem \ref{prop:Ti-Unbounded} in that three application of MMS will be applied; first in Subsection \ref{subsection:Vanishing-Ti-Bounded} to show that $\Grad K$ vanishes at a blow-up point, second in Subsection \ref{subsection:Fast-Vanishing-Ti-Bounded} to show that $\Grad K$ vanished rapidly at a blow-up point, and finally in Subsection \ref{subsection:Completion-of-Proof-of-Theorem} to complete the proof of Theorem \ref{thm:Main}.  The essential difference between the proof of Theorem \ref{thm:Main} and the proof of Theorem \ref{prop:Ti-Unbounded} is that in the proof of Theorem \ref{thm:Main}, due to Theorem \ref{prop:Ti-Unbounded}, the complications presented by the boundary equations are not minor. This makes the construction of the test functions much more delicate in the proof of Theorem \ref{thm:Main} than in the proof of Theorem \ref{prop:Ti-Unbounded}. To minimize the complications caused by the presence of $\bdy B_1^+\intersect\bdy \bb R_+^n$, we assume throughout Section \ref{section:Proof of Theorem} that $c$ is constant.

Consider the functions

\begin{equation*}
	v_i(y)
		=
		\frac{1}{M_i} v_i(x_i + \Gamma_i^{-1}(y - T_i e_n))
		=
		\frac{1}{M_i}u_i( x_i' + \Gamma_i^{-1}y),
\end{equation*}
where $x_i' = (x_1, \cdots, x_{n - 1}, 0)$ is the projection of $x_i$ onto $\bb R^{n - 1}$. By the the equations for $v_i$, standard elliptic theory, the selection process and by the classification theorem of Li and Zhu \cite{LiZhu1995}, there is a subsequence along which both $T_i$ converges and $v_i$ converges in $C_{\rm loc}^2(\overline{\bb R_+^n})$ to a classical solution $U$ of \eqref{tibdd}. Letting $c_0 = \lim_i c_i$, the classification theorem of Li and Zhu \cite{LiZhu1995} gives

\begin{equation}\label{tibdd}
	U(y)
		=
		\left(
			\frac{\gamma}
			{\gamma^2 + \abs{ y - t_0e_n}^2}
		\right)^{\frac{n - 2}{2}},
\end{equation}
where

\begin{equation*}
	\gamma
		=
		\left\{
			\begin{array}{ll}
				\left( 1 + \frac{c_0^2}{(n - 2)^2}\right)^{-1}
				&
				\text{ if }
				c_0\leq 0
				\\
				1
				&
				\text{ if }
				c_0>0
			\end{array}
		\right.
	\qquad
	\text{ and }
	\qquad
	t_0
		=
		\frac{\gamma c_0}{n - 2}.
\end{equation*}
Moreover,

\begin{equation*}
	\lim_iT_i
		=
		\left\{
			\begin{array}{ll}
				0
				&
				\text{ if } c_0\leq 0
				\\
				c_0/(n - 2)
				&
				\text{ if }
				c_0>0.
			\end{array}
		\right.
\end{equation*}
We begin by deriving a preliminary vanishing rate for $\abs{\Grad K_i(x_i')}$. For convenience, throughout Section \ref{section:Proof of Theorem} we use the notation $\abs{\Grad K_i(x_i')} = \delta_i$.

\subsection{Vanishing of ${\Grad K_i(x_i')}$}
\label{subsection:Vanishing-Ti-Bounded}
\begin{prop}\label{lem:Vanishing-Ti-Bounded}
	There exists a subsequence along which $\delta_i\to 0$.
\end{prop}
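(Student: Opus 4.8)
The plan is to argue by contradiction with the method of moving spheres, running the scheme of Proposition \ref{lem:Vanishing-Ti-Unbounded} but exploiting a decisive simplification available here: because $c$ is constant and the relevant Kelvin inversion is now centered at a point of $\bdy\bb R_+^n$, the boundary error term will vanish identically. Concretely, I would suppose $\inf_i\delta_i\geq\delta>0$ and, using \ref{item:K3} to rotate so that $\Grad K_i(x_i')/\delta_i\to e=(1,0,\ldots,0)$ (a direction tangent to $\bdy\bb R_+^n$), introduce for fixed large $R$ the tangential translates $v_{R,i}(y)=v_i(y-Re)$ and $U_R(y)=U(y-Re)$, their Kelvin inversions $v_{R,i}^\lambda,U_R^\lambda$ about the origin, and $w^\lambda=v_{R,i}-v_{R,i}^\lambda$ on $\Sigma_\lambda=\Omega_i\setminus\overline B_\lambda$. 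Since $U$ is the half-space bubble \eqref{tibdd} centered at $t_0e_n$ with parameter $\gamma$, $U_R$ is centered at $p_R:=Re+t_0e_n$, and a direct computation gives that $U_R-U_R^\lambda$ is positive on $\bb R_+^n\setminus\overline B_\lambda$ for $\lambda<\lambda^*:=(\gamma^2+|p_R|^2)^{1/2}=(\gamma^2+R^2+t_0^2)^{1/2}$ and negative there for $\lambda>\lambda^*$; I would restrict $\lambda$ to a bounded interval $[\lambda_0,\lambda_1]$ with $\lambda_0<\lambda^*<\lambda_1$. Then $w^\lambda$ solves the system \eqref{eq:wLambda-Equations-Ti-Unbounded} with interior error $Q_1^\lambda(y)=\big(H_i(y^\lambda-Re)-H_i(y-Re)\big)\big(v_{R,i}^\lambda(y)\big)^{n^*}$, and — the reason for working in these coordinates — because $c_i$ is constant and inversion about $0\in\bdy\bb R_+^n$ preserves both $\bb R_+^n$ and the relation $\partial_{y_n}(\cdot)=c_i(\cdot)^{n/(n-2)}$, the boundary error $Q_2^\lambda$ of \eqref{eq:Q2-Lambda} is identically zero, i.e.\ $B_iw^\lambda=0$ on $\bdy'\Sigma_\lambda$.

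Next I would record the interior estimates, which carry over verbatim from Lemmas \ref{lem:Scalar-Curvature-Function-Estimates} and \ref{lem:vRi-Estimates} (their proofs use only $K\in C^1$, $\inf_i\delta_i\geq\delta$, and $v_{R,i}\to U_R$): with the cone $\Omega_\lambda=\{y\in\Sigma_\lambda\cap B_{2\lambda}: y_1>2|(y_2,\ldots,y_n)|\}$ there are $\lambda$-independent $a_1,a_2>0$ with $Q_1^\lambda\leq-a_1\Gamma_i^{-1}(|y|-\lambda)(1+|y-\lambda e|^2)^{-(n+2)/2}$ on $\Omega_\lambda$ and $|Q_1^\lambda|\leq a_2\Gamma_i^{-1}(|y|-\lambda)|y|^{-2-n}$ on $\Sigma_\lambda\setminus\Omega_\lambda$. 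For the test function I would take $h^\lambda(y)=\int_{\Sigma_\lambda}\widetilde G(y,\eta)Q_1^\lambda(\eta)\,d\eta$, where $\widetilde G$ is the half-space analogue of \eqref{eq:Greens-Function-Formula}: Green's function for $-\lap$ on $\bb R_+^n\setminus\overline B_\lambda$ with \emph{Dirichlet data on both} $\bdy B_\lambda$ \emph{and} $\bdy\bb R_+^n$. Since the strongly negative part of $Q_1^\lambda$ over $\Omega_\lambda$ dominates, $h^\lambda\leq0$ in $\Sigma_\lambda$, and $h^\lambda$ obeys the bounds of Lemma \ref{lem:hLambda-Estimates-Ti-Unbounded}, in particular $h^\lambda=\circ(1)|y|^{2-n}$. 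As $h^\lambda$ vanishes on $\bdy'\Sigma_\lambda$ and is $\leq0$ just inside, $\partial_{y_n}h^\lambda\leq0$ there, so $B_i(w^\lambda+h^\lambda)=\partial_{y_n}h^\lambda\leq0$ on $\bdy'\Sigma_\lambda$; and from $-\lap h^\lambda=Q_1^\lambda$ with $h^\lambda\leq0$ one gets $L_i(w^\lambda+h^\lambda)=H_i(y-Re)\xi_1(y)h^\lambda(y)\leq0$ in $\Sigma_\lambda$. Thus \eqref{eq:Perturbation-Test-Function} and \eqref{eq:Desired-Differential-Inequalities} hold, here with the boundary inequality valid on all of $\bdy'\Sigma_\lambda$.

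With these ingredients the moving-sphere machinery proceeds exactly as in Section \ref{section:Proof of Proposition}. First, arguing as in Lemma \ref{lem:Moving-Spheres-Can-Start} — using $v_{R,i}\to U_R$, the strict sign of $U_R-U_R^{\lambda_0}$ on $\bb R_+^n\setminus\overline B_{\lambda_0}$, the outer-boundary bound \eqref{eq:vi-Lower-Bound-Upper-Boundary}, a maximum-principle comparison with a multiple of $|y|^{2-n}$ on $\Sigma_{\lambda_0}\setminus B_{R_1}$, and (to exclude a boundary minimum) the homogeneous Robin condition satisfied by $w^{\lambda_0}$ together with $\partial_{y_n}h^{\lambda_0}\leq0$ — one shows $w^{\lambda_0}+h^{\lambda_0}>0$ in $\Sigma_{\lambda_0}$ for $i$ large. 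Then $\bar\lambda=\sup\{\lambda\in[\lambda_0,\lambda_1] : w^\mu+h^\mu\geq0\text{ in }\Sigma_\mu\ \forall\,\mu\in[\lambda_0,\lambda]\}$ is well defined, and if $\bar\lambda<\lambda_1$ the strong maximum principle (the function is superharmonic, nonnegative, positive on $\{|y|=\epsilon_i\Gamma_i\}$, and a nonpositive minimum on $\bdy'\Sigma_{\bar\lambda}$ is excluded by $B_i(w^{\bar\lambda}+h^{\bar\lambda})\leq0$ and Hopf) gives $w^{\bar\lambda}+h^{\bar\lambda}>0$ in $\Sigma_{\bar\lambda}$, Hopf's lemma gives $\partial_\nu(w^{\bar\lambda}+h^{\bar\lambda})>0$ on $\bdy B_{\bar\lambda}$, and continuity in $\lambda$ contradicts the maximality of $\bar\lambda$. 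Hence $\bar\lambda=\lambda_1>\lambda^*$, so $w^{\lambda_1}+h^{\lambda_1}\geq0$ in $\Sigma_{\lambda_1}$; passing to the limit and using $h^{\lambda_1}=\circ(1)|y|^{2-n}$ yields $U_R-U_R^{\lambda_1}\geq0$ on $\bb R_+^n\setminus\overline B_{\lambda_1}$, contradicting its negativity for $\lambda_1>\lambda^*$. This contradiction forces $\delta_i\to0$ along a subsequence.

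The most delicate point in this argument is arranging that the test function cooperates with the boundary equation: this is precisely why $\widetilde G$ is chosen to vanish on $\bdy\bb R_+^n$ (rather than to satisfy a Neumann condition there), so that $h^\lambda=0$ and $\partial_{y_n}h^\lambda\leq0$ on $\bdy'\Sigma_\lambda$, which — combined with the vanishing of $Q_2^\lambda$ coming from the constant-$c$, flat-boundary Kelvin identity — makes the boundary harmless; everything else is a transcription of Section \ref{section:Proof of Proposition} into the half-space geometry. For this reason I do not expect this first application of MMS to be the hard part of Section \ref{section:Proof of Theorem}. The genuine complications caused by $\bdy B_1^+\intersect\bdy\bb R_+^n$ — unavoidable because Theorem \ref{prop:Ti-Unbounded} forces blow-up at bounded $T_i$, so the boundary cannot be sent to infinity — appear only in the two subsequent applications of MMS, in Subsections \ref{subsection:Fast-Vanishing-Ti-Bounded} and \ref{subsection:Completion-of-Proof-of-Theorem}, where the test functions must in addition absorb boundary errors of the critical order $|y|^{-n}$.
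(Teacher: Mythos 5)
Your reduction of the boundary error is correct: with $c_i$ constant and the Kelvin inversion centered at $0\in\bdy\bb R_+^n$ one indeed gets $B_iw^\lambda=0$ on $\bdy'\Sigma_\lambda$, exactly as in \eqref{eq:wLambda-Equations-Ti-Bounded}. The gap is in the test function. You define $h^\lambda=\int_{\Sigma_\lambda}\widetilde G(y,\eta)Q_1^\lambda(\eta)\,d\eta$ with $\widetilde G$ Dirichlet on \emph{both} $\bdy B_\lambda$ and $\bdy\bb R_+^n$, and assert that $h^\lambda\le 0$ and that it ``obeys the bounds of Lemma \ref{lem:hLambda-Estimates-Ti-Unbounded}.'' That transcription fails. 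The only mechanism producing $h^\lambda\le 0$ is that the favorable negative mass of $Q_1^\lambda$, supported in the cone $\Omega_\lambda$ about $e=e_1$, contributes $-C\Gamma_i^{-1}(\abs y-\lambda)\lambda^{-n}\log\lambda$ as in \eqref{eq:I1EstimateySmall}, and the $\log\lambda$ (for $R$ large) absorbs the indefinite contribution of size $C\Gamma_i^{-1}(\abs y-\lambda)\lambda^{-n}$ from $\Sigma_\lambda\setminus\Omega_\lambda$. But by \ref{item:K3} the direction $e$ is tangential, so $\Omega_\lambda$ hugs $\{y_n=0\}$ and the point $\lambda e$, where the weight $(1+\abs{\eta-\lambda e}^2)^{-(n+2)/2}$ concentrates, lies on the corner $\bdy B_\lambda\intersect\bdy\bb R_+^n$ --- exactly where your $\widetilde G$ is suppressed: near that right-angle edge it vanishes like the product $(\abs\eta-\lambda)\,\eta_n$. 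Since $\eta_n\lesssim\abs{\eta-\lambda e}$ on $\Omega_\lambda$, the integrand in $I_1$ acquires an extra factor of order $\eta_n/\lambda$, which converts the key lower bound $\int_{\Omega_\lambda}(\abs\eta-\lambda)^2(1+\abs{\eta-\lambda e}^2)^{-(n+2)/2}d\eta\ge C\log\lambda$ into a quantity bounded by an absolute constant; meanwhile $\widetilde G\le G$ gives no matching gain on $\Sigma_\lambda\setminus\Omega_\lambda$, where $\eta$ need not be near $\{y_n=0\}$. So the favorable and indefinite parts of $h^\lambda$ are of the same order for every $R$, you cannot conclude $h^\lambda\le 0$, and with that both $L_i(w^\lambda+h^\lambda)\le 0$ and your boundary inequality $\partial_{y_n}h^\lambda\le 0$ on $\bdy'\Sigma_\lambda$ collapse. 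This is precisely why the paper uses the reflected kernel $\overline G(y,\eta)=G(y,\eta)+G(\bar y,\eta)$ (so the cone's mass is not suppressed and $\partial_{y_n}h_1\equiv0$ on $\bdy'\Sigma_\lambda$) and then pays for $h_1$ not vanishing on the boundary with the second corrector $h_2=-a\Gamma_i^{-1}y_n\abs y^{-n}g(\abs y)$; absorbing the resulting term $c_i\xi_2\abs{h_1}$ is one of the places where the smallness of $c$ is genuinely used.

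A second, smaller point: even with a working test function, the start-up step is not a verbatim copy of Lemma \ref{lem:Moving-Spheres-Can-Start}. There the factor $T_i\to\infty$ killed the boundary term at a would-be minimum of the comparison function; here, at a nonpositive boundary minimum $y_i^*$ of $f_i(y)=v_{R,i}(y)-(1-\epsilon_0)\gamma^{(n-2)/2}\abs{y-e_n}^{2-n}$ one has $\partial_{y_n}f_i(y_i^*)=c_iv_{R,i}(y_i^*)^{n/(n-2)}-(n-2)(1-\epsilon_0)\gamma^{(n-2)/2}\abs{y_i^*-e_n}^{-n}$, and when $c_0>0$ the contradiction requires the smallness condition \eqref{eq:c0SmallnessAssumption}. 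This is independent of the choice of Green's function, so the smallness of $c$ already enters this first application of MMS, contrary to your closing remark that the boundary is harmless here; the hypotheses of Theorem \ref{thm:Main} do supply it, but your argument must invoke it explicitly.
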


	The proof is similar to the proof of Proposition \ref{lem:Vanishing-Ti-Unbounded}, the major difference being that in this case, a test function must be constructed to control terms in the boundary equation. Suppose the proposition were false and let $\delta>0$ satisfy $\inf_i\delta_i \geq \delta >0$. By assumption \ref{item:K3} we assume with no loss of generality that $\delta_i^{-1}K_i(x_i')\to e$. \\
	
	For $R\gg 1$ fixed and to be determined, we consider the functions
	
	\begin{equation}\label{eq:vRi-Definition-Ti-Bounded}
		v_{R,i}(y)
			 =
			 v_i(y - Re)
			 =
			 \frac{1}{M_i} u_i(x_i' + \Gamma_i (y - Re))
	\end{equation}
	which are well-defined in $\overline{B^+}(0, \Gamma_i/4)$. Similarly to \eqref{eq:vi-Lower-Bound-Upper-Boundary}, we may choose $\epsilon_i\to 0$ slowly so that
	\begin{equation}\label{eq:vRi-Upper-Boundary-Ti-Bounded}
		v_{R,i}(y) \geq \abs y^{2-n}\sqrt i,
		\qquad
		y\in \bdy B(0, \epsilon_i \Gamma_i) \intersect\overline{\bb R_+^n},
	\end{equation}
	so in this case we set
	
	\begin{equation}\label{eq:Omega-Definitions-Ti-Bounded}
		\Omega_i = B^+(0, \epsilon_i\Gamma_i),
		\qquad
		\bdy'\Omega_i = \bdy\Omega_i \intersect \bdy \bb R_+^n
		\qquad
		\text{ and }
		\qquad
		\bdy'' \Omega_i = \bdy \Omega_i \setminus \bdy'\Omega_i.
	\end{equation}
	Elementary computations show that $v_{R,i}$ satisfies
	
	\begin{equation}\label{eq:vRi-Differential-Equations-Ti-Bounded}
		\left\{
			\begin{array}{ll}
				\lap v_{R,i}(y) + H_i(y - Re) v_{R,i}(y)^{n^*} = 0
					&
					y\in \Omega_i
					\\
					\frac{\partial v_{R,i}}{\partial y_n}(y) = c_i v_{R,i}(y)^{n/(n - 2)}
					&
					y\in \bdy'\Omega_i,
			\end{array}
		\right.
	\end{equation}
	where $H_i(y) = K_i(x_i' + \Gamma_i^{-1}y)$. Moreover, $v_{R,i}$ converges in $C^2$ over compact subsets of $\overline{\bb R_+^n}$ to
	
	\begin{equation}\label{eq:Standard-Bubble-Ti-Bounded}
		U_R(y)
			=
			U(y - Re)
			=
			\left(
				\frac{\gamma}
				{\gamma^2 + \abs{ y - Re - t_0e_n}^2}
			\right)^{\frac{n - 2}{2}}.
	\end{equation}
	For $\lambda >0$ let $y^\lambda = \lambda^2 y/\abs y^2$ and consider the Kelvin inversions
	
	\begin{equation}\label{eq:Kelvin-Inversions-Ti-Bounded}
		U_R^\lambda(y)
			=
			\left(\frac{\lambda}{\abs y}\right)^{n - 2}
			U_R(y^\lambda),
		\qquad
		\text{ and }
		\qquad
		v_{R,i}^\lambda(y)
			=
			\left(\frac{\lambda}{\abs y}\right)
			v_{R,i}(y^\lambda).
	\end{equation}
	which are well-defined in the closure of $\Sigma_\lambda = \Omega_i \setminus B_\lambda$. Letting $\lambda^* = (\gamma^2 + t_0^2 + R^2)^{1/2}$, elementary computations show that
	
	\begin{equation}\label{eq:Critical-Position-Inequalities-Ti-Bounded}
		\left\{
			\begin{array}{lll}
				(U_R - U_R^\lambda)(y) >0
				&
				y\in \Sigma_\lambda
				&
				\text{ if }\lambda <\lambda^*
				\\
				(U_R - U_R^\lambda)(y) <0
				&
				y\in \Sigma_\lambda
				&
				\text{ if }\lambda >\lambda^*.
			\end{array}
		\right.
	\end{equation}
	Set $\lambda_0 = R$ and $\lambda_1 = R+2$. Since $\lambda_0 < \lambda^*<\lambda_1$  we only consider $\lambda$ between $\lambda_0$ and $\lambda_1$. For such $\lambda$, define
	
	\begin{equation}\label{eq:wLambda-Definition-Ti-Bounded}
		w^\lambda(y)
			=
			w_i^\lambda(y)
			=
			v_{R,i}(y) - v_{R,i}^\lambda(y),
		\qquad
		y\in \overline\Sigma_\lambda.
	\end{equation}
	For convenience we suppress both the $i$-dependence and the $R$-dependence in this notation.  Elementary computations show that $w^\lambda $ satisfies
	
	\begin{equation}\label{eq:wLambda-Equations-Ti-Bounded}
		\left\{
			\begin{array}{ll}
				L_i w^\lambda = Q^\lambda
				&
				y\in \Sigma_\lambda
				\\
				B_i w^\lambda = 0
				&
				y\in \bdy'\Sigma_\lambda
				\\
				w^\lambda(y) = 0
				&
				y\in \bdy\Sigma_\lambda \intersect \bdy B_\lambda,
			\end{array}
		\right.
	\end{equation}
	where
	
	\begin{equation*}
		\begin{array}{l}
			L_i = \lap + H_i(y - Re)\xi_1(y)
			\\
			B_i = \frac{\partial}{\partial y_n} - c_i \xi_2(y)
		\end{array}
	\end{equation*}
	are the interior and boundary operators respectively,
	
	\begin{equation}\label{eq:Mean-Value-Coefficient-1}
		\xi_1(y)
			=
			n^* \int_0^1 \left( t v_{R,i}(y) + (1 - t) v_{R,i}^\lambda(y) \right)^{\frac{4}{n -2}}\; dt
	\end{equation}
	\begin{equation}\label{eq:Mean-Value-Coefficient-2}
		\xi_2(y)
			=
			\frac{n}{n - 2} \int_0^1 \left( t v_{R,i}(y) + (1 - t) v_{R,i}^\lambda(y) \right)^{\frac{2}{n -2}}\; dt
	\end{equation}
	are obtained from the mean-value theorem and
	
	\begin{equation*}
		Q^\lambda(y)
			=
			\left(
			H_i(y^\lambda - Re) - H_i(y - Re)
			\right)
			\left( v_{R,i}(y)\right)^{n^*}
	\end{equation*}
	is an error term that will be controlled with test functions. Specifically, we will construct a test function $h^\lambda(y)$ such that both
	
	\begin{equation}\label{eq:hLambda-o(1)}
		h^\lambda(y)= \circ(1) \abs y^{2-n}
	\end{equation}
	and
	
	\begin{equation}\label{eq:Desired-Differential-Inequalities-Ti-Bounded}
		\left\{
			\begin{array}{ll}
				L_i(w^\lambda + h^\lambda)(y) \leq 0
				&
				y\in \Sigma_\lambda \intersect \mathcal O_\lambda
				\\
				B_i(w^\lambda + h^\lambda)(y)\leq 0
				&
				y\in \bdy'\Sigma_\lambda \intersect \overline{\mathcal O}_\lambda
				\\
				(w^\lambda + h^\lambda)(y) = 0
				&
				y\in \bdy\Sigma_\lambda \intersect \bdy B_\lambda,
			\end{array}
		\right.
	\end{equation}
	where
	
	\begin{equation*}
		\mathcal O_\lambda
			=
			\{
				y\in \Sigma_\lambda:
				(v_{R,i} - v_{R,i}^\lambda)(y) \leq v_{R,i}^\lambda(y)
			\}.
	\end{equation*}
	This will allow the maximum principle to be applied. Note that the maximum principle only needs to hold on $\mathcal O_\lambda$. This is because of \eqref{eq:hLambda-o(1)}; if $i$ is sufficiently large, then $(w^\lambda + h^\lambda)>0$ in $\Sigma_\lambda \setminus \mathcal O_\lambda$. \\
	
	Before we construct $h^\lambda$ we record some estimates that will be useful when deriving properties of $h^\lambda$ after it is constructed. We define the special subset of $\Sigma_\lambda$
	
	\begin{equation*}
		\Omega_\lambda
			=
			\{
				y\in \Sigma_\lambda \intersect\overline B_{2\lambda}:
				y_1>2\abs{(y_2, \cdots, y_n)}
			\}.
	\end{equation*}
	By the assumptions on $K$ and the convergence of $v_{R,i}$ to $U_R$ we have
		
		\begin{equation*}
			\left\{
				\begin{array}{ll}
					H_i(y^\lambda - Re) - H_i(y - Re)
						\leq
						- C_1 \Gamma_i^{-1} (\abs y - \lambda)
					&
					y\in \Omega_\lambda
					\\
					\abs{H_i(y^\lambda - Re) - H_i(y - Re)}
						\leq
						C_2 \Gamma_i^{-1} (\abs y - \lambda)
					&
					y\in \Sigma_\lambda.
				\end{array}
			\right.
		\end{equation*}
		Moreover, similarly to Lemma \ref{lem:vRi-Estimates}, there are positive constants $C_1, C_2$ such that for large $i$, both
		
		\begin{equation*}
			C_1\abs y^{2-n} \leq v_{R,i}^\lambda(y)\leq C_2\abs y^{2-n}
			\qquad y\in \Sigma_\lambda \setminus \Omega_\lambda,
		\end{equation*}
		and
		
		\begin{equation*}
			C_1 \left(\frac \lambda{\abs y}\right)^{n - 2}
				\left(\frac{1}{1 + \abs{ y - \lambda e}^2}
				\right)^{(n - 2)/2}
			\leq v_{R,i}^\lambda(y)
			\leq 2
			\qquad y \in \Omega_\lambda.
		\end{equation*}
	
	Therefore, there are positive $\lambda$-independent constants $a_1, a_2$ such that
	
	\begin{equation}\label{eq:QLambdaNegEstimate}
		Q^\lambda(y)
			\leq - a_1\Gamma_i^{-1} (\abs y - \lambda)
				\left(\frac{1}{1 +\abs{y - \lambda e}^2}
				\right)^{(n + 2)/2}
				\qquad
				y\in \Omega_\lambda.
	\end{equation}
	and
	
	\begin{equation}\label{eq:AbsQLambdaEstimates}
		\abs{Q^\lambda(y)} \leq
			\left\{
				\begin{array}{ll}
					a_2\Gamma_i^{-1} (\abs y - \lambda) \abs y^{-2-n}
						& y\in \Sigma_\lambda \setminus
						(\overline{B_\lambda \union \Omega_\lambda})\\
					a_2\Gamma_i^{-1} (\abs y - \lambda)
						& y \in \overline \Omega_\lambda.
				\end{array}
			\right.
	\end{equation}
	
	Finally, $\xi_1$ and $\xi_2$ still satisfy the conclusions of Lemma \ref{lem:Mean-Value-Coefficient-Estimates-Ti-Unbounded}.\\
	
	 Now we construct the test function $h^\lambda$ which will be the sum of two functions $h_1$ and $h_2$. The first test function $h_1$ is similar to the test function constructed in \eqref{eq:Naive-hLambda-Definition}. The second test function $h_2$ will control the bad terms on $\bdy'\Sigma_\lambda$ introduced by $h_1$.
	%
	Let $G(y, \eta)$ be Green's function for $-\lap$ on $\bb R^n\setminus \overline B_\lambda$ relative to the Dirichlet condition. The expression for $G(y, \eta)$ is given in \eqref{eq:Greens-Function-Formula}.
	Let $\bar y = (y_1, \cdots, y_{n - 1}, -y_n)$ denote the reflection of $y$ across $\bdy \bb R_+^n$ and set
	
	\begin{equation*}\label{eq:Reflected-Greens-Function}
		\overline G(y, \eta) = G(y, \eta) + G(\bar y, \eta).
	\end{equation*}
	 Define
	
	\begin{equation*}
		h_1(y)
		=
		\int_{\Sigma_\lambda}
			\overline G(y, \eta) Q^\lambda(\eta)
			\; d\eta.
	\end{equation*}
	Clearly $h_1$ satisfies the following
	
	\begin{equation}\label{eq:h1-Preliminary-Estimates}
	\left\{
		\begin{array}{ll}
			-\lap h_1(y) = Q^\lambda(y)
				& y\in \Sigma_\lambda
				\\
			h_1(y) = 0
				&
				y\in \bdy\Sigma_\lambda \intersect \bdy B_\lambda
				\\
			\frac{\partial h_1}{\partial y_n} \ident 0
				&
				y\in \bdy' \Sigma_\lambda.
		\end{array}
	\right.
	\end{equation}
	\begin{lem}\label{lem:h1Estimates}
	There exists $R_0$ sufficiently large such that if $R\geq R_0$ then there are positive constants $C_1$ and $C_2$ such that
	
	\begin{equation}\label{eq:h1Estimates}
		h_1(y) \leq
			\left\{
				\begin{array}{ll}
					- C_1 \Gamma_i^{-1}(\abs y - \lambda)\lambda^{-n} \log \lambda
						& y\in \overline{\Sigma_\lambda} \intersect \overline B_{4\lambda}\\
					- C_1 \Gamma_i^{-1}\abs y^{2-n} \lambda^{-1}\log \lambda
						& y\in \overline{\Sigma_\lambda} \setminus B_{4\lambda}
				\end{array}
			\right.
	\end{equation}
	
	and
	
	\begin{equation}\label{eq:Absh1Estimates}
		\abs{h_1(y)} \leq
			\left\{
				\begin{array}{ll}
					C_2\Gamma_i^{-1}(\abs y - \lambda)\lambda^2
						& y\in \overline{\Sigma_\lambda} \intersect \overline B_{4\lambda}\\
					C_2\Gamma_i^{-1}\abs y^{2-n} \lambda^{n+1}
						& y\in \overline{\Sigma_\lambda} \setminus B_{4\lambda}.
				\end{array}
			\right.
	\end{equation}
	\end{lem}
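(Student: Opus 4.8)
The plan is to transcribe the proof of Lemma~\ref{lem:hLambda-Estimates-Ti-Unbounded} almost word for word, the one genuinely new point being that the kernel is now the reflected Green's function $\overline G(y,\eta)=G(y,\eta)+G(\bar y,\eta)$, so the first step is to check that the extra term $G(\bar y,\eta)$ does not spoil the Green's-function bounds. For $y,\eta\in\overline{\bb R_+^n}$ one has $\abs{\bar y}=\abs y$ and $\abs{\bar y-\eta}^2=\sum_{j=1}^{n-1}(y_j-\eta_j)^2+(y_n+\eta_n)^2\geq\abs{y-\eta}^2$; in particular $\abs{\bar y}>\lambda$, so $\bar y\in\bb R^n\setminus\overline B_\lambda$ and $G(\bar y,\eta)$ is a legitimate value of the Dirichlet Green's function. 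The estimates of Lemma~\ref{lem:GreensEstimates} are bounds for the Green's function of $\bb R^n\setminus\overline B_\lambda$ whose right-hand sides depend on the arguments only through $\abs{\cdot}$, $\abs{\cdot-\cdot}$ and $\lambda$; applying them with $y$ replaced by $\bar y$ and then using $\abs{\bar y}=\abs y$ together with $\abs{\bar y-\eta}\geq\abs{y-\eta}$ shows that $G(\bar y,\eta)$ obeys exactly the same upper bounds as $G(y,\eta)$, up to a fixed constant. Since moreover $G(\bar y,\eta)\geq0$, we have $G(y,\eta)\leq\overline G(y,\eta)$ together with an upper bound on $\overline G(y,\eta)$ of the same form as the upper bounds on $G(y,\eta)$, so $\overline G$ enjoys lower bounds of the form \eqref{eq:GLowerBoundySmall}, \eqref{eq:GLowerBoundyLarge} and upper bounds of the form \eqref{eq:GUpperBoundySmall}, \eqref{eq:GUpperBoundyLarge}. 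After this reduction, every step below is the step with the same name in the proof of Lemma~\ref{lem:hLambda-Estimates-Ti-Unbounded}.

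For the first (negativity) estimate \eqref{eq:h1Estimates}, write $h_1=I_1+I_2$ with $I_1=\int_{\Omega_\lambda}\overline G(y,\eta)Q^\lambda(\eta)\,d\eta$ and $I_2=\int_{\Sigma_\lambda\setminus\Omega_\lambda}\overline G(y,\eta)Q^\lambda(\eta)\,d\eta$. On $\Omega_\lambda$ the error term is strictly negative, \eqref{eq:QLambdaNegEstimate}, so combining $\overline G\geq G$, the lower bounds \eqref{eq:GLowerBoundySmall}/\eqref{eq:GLowerBoundyLarge}, and the elementary inequality $\int_{\Omega_\lambda}(\abs\eta-\lambda)^2(1+\abs{\eta-\lambda e}^2)^{-(n+2)/2}\,d\eta\geq C\log\lambda$, one gets $I_1(y)\leq-C\Gamma_i^{-1}(\abs y-\lambda)\lambda^{-n}\log\lambda$ on $\overline{\Sigma_\lambda}\intersect\overline B_{4\lambda}$ and $I_1(y)\leq-C\Gamma_i^{-1}\abs y^{2-n}\lambda^{-1}\log\lambda$ on $\overline{\Sigma_\lambda}\setminus B_{4\lambda}$, exactly as in \eqref{eq:I1EstimateySmall} and \eqref{eq:I1EstimateyLarge}. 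For $I_2$ one uses $\abs{Q^\lambda(\eta)}\leq a_2\Gamma_i^{-1}(\abs\eta-\lambda)\abs\eta^{-2-n}$ from \eqref{eq:AbsQLambdaEstimates}, the upper bounds for $\overline G$, and the partitions \eqref{eq:I2-Estimate-Partition} (when $\abs y\leq4\lambda$) and \eqref{eq:I2-Estimate-Partition-y-Large} (when $\abs y>4\lambda$), carrying out the same routine integral estimates as in Lemma~\ref{lem:hLambda-Estimates-Ti-Unbounded} with $\abs\eta-\lambda\leq C\abs{y-\eta}$ on the pieces where it is needed; this gives $\abs{I_2(y)}\leq C\Gamma_i^{-1}(\abs y-\lambda)\lambda^{-n}$, resp.\ $\abs{I_2(y)}\leq C\Gamma_i^{-1}\abs y^{2-n}\lambda^{-1}$. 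Choosing $R$ (hence $\lambda\geq R$) large makes $\abs{I_2}$ negligible compared with $\abs{I_1}$, which yields \eqref{eq:h1Estimates}.

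For the second estimate \eqref{eq:Absh1Estimates}, note that the only negative contribution to $h_1$ is $I_1$, so it suffices to bound $\abs{I_1(y)}$. Using $\abs{Q^\lambda}$ from \eqref{eq:AbsQLambdaEstimates} on $\Omega_\lambda$, the upper bounds \eqref{eq:GUpperBoundySmall}, \eqref{eq:GUpperBoundyLarge} for $\overline G$, and splitting $\Omega_\lambda$ according to the size of $\abs{y-\eta}$ exactly as in Lemma~\ref{lem:hLambda-Estimates-Ti-Unbounded}, one obtains $\abs{I_1(y)}\leq C\Gamma_i^{-1}(\abs y-\lambda)\lambda^2$ for $y\in\overline{\Sigma_\lambda}\intersect\overline B_{4\lambda}$ and $\abs{I_1(y)}\leq C\Gamma_i^{-1}\abs y^{2-n}\lambda^{n+1}$ for $y\in\overline{\Sigma_\lambda}\setminus B_{4\lambda}$, which is \eqref{eq:Absh1Estimates}. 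I expect the only real obstacle to be the reflected-kernel bookkeeping of the first paragraph, namely confirming that the appendix estimates for $G$ are robust under $y\mapsto\bar y$; once that is settled, the argument is a verbatim repetition of Lemma~\ref{lem:hLambda-Estimates-Ti-Unbounded}. The harder effect of the boundary — the term $-c_i\xi_2 h_1$ that $h_1$ produces in the boundary operator $B_i$ on $\bdy'\Sigma_\lambda$, whose sign is not controlled here since $T_i$ need no longer be large — is dealt with separately by the second test function $h_2$, not by this lemma.
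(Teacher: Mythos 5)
Your proposal is correct and follows exactly the route the paper intends: the paper omits this proof, stating it is similar to that of Lemma \ref{lem:hLambda-Estimates-Ti-Unbounded}, and your one genuinely new step --- that $\overline G$ inherits the bounds \eqref{eq:GLowerBoundySmall}--\eqref{eq:GUpperBoundyLarge} since $\overline G\geq G$ and $G(\bar y,\eta)$ obeys the upper bounds because $\abs{\bar y}=\abs y$ and $\abs{\bar y-\eta}\geq\abs{y-\eta}$ --- is precisely the observation already recorded at the end of the proof of Lemma \ref{lem:GreensEstimates} in the appendix. The remaining transcription of the $I_1$, $I_2$ estimates with $Q^\lambda$ in place of $Q_1^\lambda$ matches the paper's argument, and you correctly note that the boundary term generated by $h_1$ is the job of $h_2$, not of this lemma.
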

	The proof of Lemma \ref{lem:h1Estimates} is similar to the proof of Lemma \ref{lem:hLambda-Estimates-Ti-Unbounded} and is omitted.
%
%
	
	Now we define the second test function $h_2$. Let $g:[\lambda, \infty)\to [0,\infty)$ be a smooth positive function satisfying
	
	\begin{equation}\label{eq:gDefinition}
		g(r) =
			\left\{
				\begin{array}{ll}
					\lambda\left(\frac{r}{\lambda}\right)^n - \frac{n - 1}{2}\frac{r^2}{\lambda}
					+
					\frac{\lambda}{2}(n - 3)
						& \lambda \leq r\leq 3\lambda\\
					\text{ smooth positive connection }
						& 3\lambda\leq r\leq 4\lambda\\
					\lambda^{-1} -  r^{-1}
						& 4\lambda \leq r,
				\end{array}
			\right.
	\end{equation}
	where `smooth positive connection' means there is a constant $M(\lambda)>0$ such that both
	
	\begin{equation*}
		\norm g_{C^2([3\lambda, 4\lambda])} \leq M
	\end{equation*}
	and
	
	\begin{equation}\label{eq:g-Middle-Estimate}
		g(r) \geq \frac 1 M
		\qquad
		3\lambda \leq r \leq 4\lambda.
	\end{equation}
	Elementary computations show that
	
	\begin{equation*}
		\left\{
			\begin{array}{ll}
				g(\lambda) = 0,
				&
				g'(\lambda) = 1
				\\
				g''(r) >0,
				 &
				\lambda <r<3\lambda.
			\end{array}
		\right.
	\end{equation*}
	In particular, $g'(r)>1$ for $\lambda <r\leq 3\lambda$ so there is a positive constant $C$ such that
	
	\begin{equation}\label{eq:g-Linear-Estimate}
		r - \lambda
		\leq
		g(r)
		\leq
		C(r - \lambda)
		\qquad
		\lambda \leq r \leq 3\lambda.
	\end{equation}
	Moreover, we have both
	
	\begin{equation}\label{eq:g''-Estimates-1}
		g''(r) - \frac{n - 1}{r} g'(r)
		=
		\left\{
			\begin{array}{ll}
				(n - 1)(n - 2)\lambda^{-1}
				&
				\lambda <r<3\lambda
				\\
				-(n+1)r^{-3}
				&
				4\lambda<r
			\end{array}
		\right.
	\end{equation}
	and
	
	\begin{equation}\label{eq:g''-Estimates-2}
		- M \leq
		g''(r) - \frac{n - 1}{r} g'(r)
		\leq
		M
		\qquad
		3\lambda \leq r\leq 4\lambda.
	\end{equation}
	For $a>0$ fixed but to be determined (will be chosen sufficiently small and depending on $n, \Lambda, \lambda$ and $M$) define
	
	\begin{equation*}
	h_2(y) = - a\Gamma_i^{-1}y_n \abs y^{-n}g(\abs y)
	\qquad
	y\in \overline{\Sigma}_\lambda.
	\end{equation*}
	Clearly, $h_2<0$ in $\Sigma_\lambda$, $h_2\ident 0$ on $\bdy\Sigma_\lambda \intersect (\bdy B_\lambda \union \bdy \bb R_+^n)$ and
	
	\begin{eqnarray}\label{eq:Absh2Estimate}
		\abs{h_2(y)}
			 & \leq &
			 	\left\{
					\begin{array}{ll}
						a \Gamma_i^{-1}\lambda^{1-n} (\abs y - \lambda)
						&
						y\in \overline{\Sigma_\lambda}\intersect \overline{B_{3\lambda}}
						\\
						a M \Gamma_i^{-1} \lambda^{1-n}
						&
						y\in (\overline{\Sigma_\lambda}\intersect \overline B_{4\lambda})
						\setminus
						B_{3\lambda}
						\\
						a \Gamma_i^{-1}\abs y^{1-n}\lambda^{-1}
						&
						y\in \overline{\Sigma_\lambda}\setminus B_{4\lambda}
					\end{array}
				\right.
			\\
			& = &
			\circ(1) \abs y^{2-n}.\notag
	\end{eqnarray}
	Performing elementary computations and using the properties of $g$ given in \eqref{eq:g''-Estimates-1} and \eqref{eq:g''-Estimates-2} we obtain
	
	\begin{eqnarray}\label{eq:Laplacian-h2-Estimates}
		\lap h_2(y)
			& = &
			- a \Gamma_i^{-1} y_n\abs y^{-n}
			\left(
				g''(\abs y) - \frac{n - 1}{\abs y} g'(\abs y)
			\right)
			\notag
			\\
			& \leq &
				\left\{
					\begin{array}{ll}
						0
						&
						y\in \Sigma_\lambda \intersect B_{3\lambda}
						\\
						\bar a \Gamma_i^{-1} M\lambda^{1-n}
						&
						y\in (\Sigma_\lambda \intersect B_{4\lambda})
						\setminus B_{3\lambda}
						\\
						\bar a \Gamma_i^{-1} \abs y^{-2-n}
						&
						y\in \Sigma_\lambda \setminus B_{4\lambda},
					\end{array}
				\right.
	\end{eqnarray}
	where $\bar a$ denotes a constant of the form $C(n) a$. Also, using  \eqref{eq:gDefinition}, \eqref{eq:g-Middle-Estimate} and \eqref{eq:g-Linear-Estimate} we obtain
	
	\begin{eqnarray}\label{eq:h2-Boundary-Derivative-Estimate}
		\left.\frac{\partial h_2}{\partial y_n}(y) \right|_{y\in \bdy'\Sigma_\lambda}
			& = &
			- a \Gamma_i^{-1} \abs y^{-n} g(\abs y)
			\notag
			\\
			& \leq &
			\left\{
				\begin{array}{ll}
					- \bar a \Gamma_i^{-1} \lambda^{-n}(\abs y - \lambda)
					&
					y\in \bdy'\Sigma_\lambda \intersect \overline B_{3\lambda}
					\\
					- \bar a M^{-1} \Gamma_i^{-1} \lambda^{-n}
					&
					y\in (\bdy'\Sigma_\lambda \intersect \overline B_{\lambda})
					\setminus B_{3\lambda}
					\\
					-\bar a \Gamma_i^{-1} \lambda^{-1}\abs y^{-n}
					&
					y\in \bdy'\Sigma_\lambda \setminus B_{4\lambda}.
				\end{array}
			\right.
	\end{eqnarray}
%
	Let $h^\lambda = h_1 + h_2$. Since each of $h_1$ and $h_2$ are non-positive, using \eqref{eq:h1-Preliminary-Estimates} we obtain
	
	\begin{equation}\label{eq:Differential-Inequalities}
		\left\{
			\begin{array}{ll}
				L_i (w^\lambda + h^\lambda)(y)
					\leq
					H_i(y - Re) \xi_1(y) h_1(y) + \lap h_2
				&
				y\in \Sigma_\lambda
				\\
				B_i(w^\lambda + h^\lambda)(y)
					\leq c_i \xi_2(y) \abs{ h_1(y)} + \frac{\partial h_2}{\partial y_n}(y)
				&
				y\in \bdy' \Sigma_\lambda
				\\
				(w^\lambda + h^\lambda)(y)
					=
					0
				&
				y\in \bdy\Sigma_\lambda \intersect \bdy B_\lambda.
			\end{array}
		\right.
	\end{equation}
	Moreover, since $H_i(y - Re) \geq \Lambda^{-1}$, using Lemma \ref{lem:Mean-Value-Coefficient-Estimates-Ti-Unbounded}, equation \eqref{eq:h1Estimates} and \eqref{eq:Laplacian-h2-Estimates} we see that $a= a(M,\lambda)$ may be chosen sufficiently small to achieve
	
	\begin{equation*}
		L_i(w^\lambda + h^\lambda)(y) \leq 0
		\qquad
		y\in \Sigma_\lambda.
	\end{equation*}
	Now consider the boundary inequality in \eqref{eq:Differential-Inequalities}. If $c_i\leq 0$ then $B_i(w^\lambda  +h^\lambda)\leq 0$ on $\bdy'\Sigma_\lambda$ holds trivially as $\partial h_2/\partial y_n \leq0$. We only need to consider the case $c_i>0$. By \eqref{eq:Absh1Estimates} and \eqref{eq:Absh2Estimate} there is a constant $C(M,\lambda)>0$ such that
	
	\begin{eqnarray*}
		\abs{h_1(y)}
			& \leq &
			\left\{
				\begin{array}{ll}
					C \Gamma_i^{-1}(\abs y - \lambda)
					&
					y\in \bdy'\Sigma_\lambda \intersect \overline B_{3\lambda}
					\\
					C\Gamma_i^{-1}
					&
					y\in (\bdy'\Sigma_\lambda \intersect \overline B_{\lambda})
					\setminus B_{3\lambda}
					\\
					C \Gamma_i^{-1} \abs y^{2-n}
					&
					y\in \bdy'\Sigma_\lambda \setminus B_{\lambda}.
				\end{array}
			\right.	
	\end{eqnarray*}
	Combining this with lemma \ref{lem:Mean-Value-Coefficient-Estimates-Ti-Unbounded} and \eqref{eq:h2-Boundary-Derivative-Estimate} we see that there is $\epsilon(n, \Lambda, \lambda, M, a)>0$ such that if $c_0<\epsilon$ then
	
	\begin{equation*}
		B_i(w^\lambda + h^\lambda)(y)
			\leq
			0
		\qquad
		y\in \bdy'\Sigma_\lambda \intersect \overline{\mathcal O}_\lambda.
	\end{equation*}
	%
%
	The next lemma ensures that the moving sphere process can start.
	\begin{lem}\label{lem:Start-Moving-Spheres-Ti-Bounded}
		There exists $\epsilon> 0$ sufficiently small and $i_0\in \bb N$ such that if $c_0<\epsilon$ and $i\geq i_0$ then
		
		\begin{equation*}
			w^{\lambda_0}(y) + h^{\lambda_0}(y) >0
			\qquad
			y\in \Sigma_{\lambda_0}.
		\end{equation*}
	\end{lem}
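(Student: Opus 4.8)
The plan is to follow the template of Lemma~\ref{lem:Moving-Spheres-Can-Start}, splitting $\Sigma_{\lambda_0}$ into the bounded piece $\Sigma_{\lambda_0}\intersect\overline{B_{R_1}}$ (for $R_1\gg R$ fixed and to be chosen) and its complement $\Sigma_{\lambda_0}\setminus B_{R_1}$. On $\Sigma_{\lambda_0}\intersect\overline{B_{R_1}}$ the argument is essentially that of Lemma~\ref{lem:Moving-Spheres-Can-Start}: the convergence $w^{\lambda_0}\to U_R-U_R^{\lambda_0}$, the strict inequality $\lambda_0=R<\lambda^*=(\gamma^2+t_0^2+R^2)^{1/2}$ together with \eqref{eq:Critical-Position-Inequalities-Ti-Bounded} (which makes $U_R-U_R^{\lambda_0}>0$ on $\Sigma_{\lambda_0}$ with strictly positive radial derivative on $\bdy B_{\lambda_0}$), and the estimates of Lemma~\ref{lem:h1Estimates} and \eqref{eq:Absh2Estimate} (which give $h^{\lambda_0}(y)=\circ(1)\abs y^{2-n}$ on $\Sigma_{\lambda_0}$ and $\abs{h^{\lambda_0}(y)}\le C\Gamma_i^{-1}(\abs y-\lambda_0)$ near $\bdy B_{\lambda_0}$) together force $(w^{\lambda_0}+h^{\lambda_0})(y)>0$ there for $i$ large. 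For the complement I would first record the asymptotics: as $\abs y\to\infty$, $U_R(y)=\gamma^{(n-2)/2}\abs y^{2-n}(1+O(\abs y^{-1}))$, and since $\lambda_0<\lambda^*$ one has $R^{n-2}U_R(0)=(\gamma R^2/(\gamma^2+t_0^2+R^2))^{(n-2)/2}<\gamma^{(n-2)/2}$, so there are $\epsilon_0=\epsilon_0(R)>0$ and $R_1$ large with $U_R^{\lambda_0}(y)\le(1-5\epsilon_0)\gamma^{(n-2)/2}\abs y^{2-n}$ for $\abs y\ge R_1$ and $U_R(y)\ge(1-\epsilon_0/4)\gamma^{(n-2)/2}\abs y^{2-n}$ for $\abs y=R_1$. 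Writing $v_{R,i}^{\lambda_0}(y)=(R/\abs y)^{n-2}v_{R,i}(y^{\lambda_0})$ with $y^{\lambda_0}\in\overline{B^+(0,R^2/R_1)}$ and using $v_{R,i}\to U_R$ uniformly on compacta of $\overline{\bb R_+^n}$, one obtains $v_{R,i}^{\lambda_0}(y)\le(1-4\epsilon_0)\gamma^{(n-2)/2}\abs y^{2-n}$ on $\Sigma_{\lambda_0}\setminus B_{R_1}$ for $i$ large; since also $\abs{h^{\lambda_0}(y)}=\circ(1)\abs y^{2-n}$ there, it suffices to prove
\begin{equation*}
	v_{R,i}(y)\ge(1-\epsilon_0)\gamma^{(n-2)/2}\abs y^{2-n},
	\qquad y\in\Sigma_{\lambda_0}\setminus B_{R_1}.
\end{equation*}

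This last estimate is where the proof departs from Lemma~\ref{lem:Moving-Spheres-Can-Start}: there the divergence $T_i\to\infty$ allowed comparing $v_{R,i}$ with the harmonic function $(1-\epsilon_0)\abs y^{2-n}$, whereas here, by Theorem~\ref{prop:Ti-Unbounded}, we are in the $T_i$-bounded case and $\bdy'\Sigma_{\lambda_0}\subset\{y_n=0\}$, on which $\abs y^{2-n}$ has vanishing normal derivative. I would therefore use a perturbed barrier. Put $A=(1-\epsilon_0)\gamma^{(n-2)/2}$ and, shrinking $\epsilon$ if necessary, set $\phi(y)=A\abs y^{2-n}+\epsilon\,y_n\abs y^{-n}$; then $\phi$ is positive and harmonic on $\Sigma_{\lambda_0}\setminus B_{R_1}$, satisfies $\partial\phi/\partial y_n=\epsilon\abs y^{-n}$ on $\{y_n=0\}$, and $\phi(y)\ge A\abs y^{2-n}$. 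Enlarging $R_1$ one checks $\phi\le v_{R,i}$ on $\bdy B_{R_1}$ (from $A+\epsilon/R_1<(1-\epsilon_0/2)\gamma^{(n-2)/2}$ and $v_{R,i}\to U_R$) and on $\bdy''\Omega_i$ (by \eqref{eq:vRi-Upper-Boundary-Ti-Bounded}), while $f_i:=v_{R,i}-\phi$ is superharmonic since $\lap v_{R,i}=-H_i v_{R,i}^{n^*}<0$. If $f_i$ attained a nonpositive minimum over the compact set $\overline{\Sigma_{\lambda_0}}\setminus B_{R_1}$, then by the strong minimum principle it would be attained at some $y_i^*$ in the relative interior of $\bdy'\Sigma_{\lambda_0}$, so $\partial f_i/\partial y_n(y_i^*)\ge0$; but $f_i(y_i^*)\le0$ forces $v_{R,i}(y_i^*)\le A\abs{y_i^*}^{2-n}$, hence (using $A\le1$) $v_{R,i}(y_i^*)^{n/(n-2)}\le\abs{y_i^*}^{-n}$, and so by the boundary equation in \eqref{eq:vRi-Differential-Equations-Ti-Bounded} and $c_i<\epsilon$ (valid for $i$ large since $c_0<\epsilon$),
\begin{equation*}
	\frac{\partial f_i}{\partial y_n}(y_i^*)
		=c_i\,v_{R,i}(y_i^*)^{n/(n-2)}-\epsilon\abs{y_i^*}^{-n}
		<\epsilon\abs{y_i^*}^{-n}-\epsilon\abs{y_i^*}^{-n}=0,
\end{equation*}
a contradiction. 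Thus $v_{R,i}>\phi\ge(1-\epsilon_0)\gamma^{(n-2)/2}\abs y^{2-n}$ on $\Sigma_{\lambda_0}\setminus B_{R_1}$, which together with the bounded-region estimate proves the lemma.

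The main obstacle, and the only genuinely new point compared with Lemma~\ref{lem:Moving-Spheres-Can-Start}, is precisely this comparison step on $\Sigma_{\lambda_0}\setminus B_{R_1}$: because $\bdy'\Sigma_{\lambda_0}$ now sits on $\{y_n=0\}$, the naive harmonic barrier of Section~\ref{section:Proof of Proposition} no longer closes the maximum-principle argument on the flat boundary, and one must introduce the correction $\epsilon\,y_n\abs y^{-n}$. It is exactly here that the hypothesis $c_0<\epsilon$ is used in this lemma, namely to guarantee that the normal derivative $\epsilon\abs y^{-n}$ of the correction dominates the boundary flux $c_i v_{R,i}^{n/(n-2)}$ at a would-be nonpositive boundary minimum of $f_i$.
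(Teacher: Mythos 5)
Your proposal is correct, and its overall architecture is the one the paper uses: the bounded region $\Sigma_{\lambda_0}\intersect B_{R_1}$ is handled by convergence to $U_R-U_R^{\lambda_0}$ together with $h^{\lambda_0}=\circ(1)\abs y^{2-n}$, the Kelvin image is bounded by $(1-4\epsilon_0)\gamma^{(n-2)/2}\abs y^{2-n}$ outside $B_{R_1}$, and the heart of the matter is a maximum-principle lower bound for $v_{R,i}$ on $\Sigma_{\lambda_0}\setminus B_{R_1}$ in which a would-be nonpositive minimum on the flat boundary is ruled out by comparing the boundary flux $c_i v_{R,i}^{n/(n-2)}$ (from \eqref{eq:vRi-Differential-Equations-Ti-Bounded}) with the normal derivative of the comparison function. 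Where you genuinely diverge is in the choice of that comparison function. The paper simply moves the pole of the fundamental solution off the flat boundary, comparing $v_{R,i}$ with $(1-\epsilon_0)\gamma^{(n-2)/2}\abs{y-e_n}^{2-n}$; on $\{y_n=0\}$ this has inward normal derivative $(n-2)(1-\epsilon_0)\gamma^{(n-2)/2}\abs{y-e_n}^{-n}$, so no correction term is needed and the contradiction at the boundary minimum only requires $c_0$ below the explicit threshold \eqref{eq:c0SmallnessAssumption}, which is of size at least $(n-2)/2$. You keep the pole at the origin and add the harmonic correction $\epsilon\,y_n\abs y^{-n}$, whose boundary derivative $\epsilon\abs y^{-n}$ must dominate $c_iv_{R,i}^{n/(n-2)}\leq c_i\abs y^{-n}$ (your use of $A\leq 1$ is fine since $\gamma\leq1$); this forces the admissible size of $c_0$ to be tied to the barrier coefficient $\epsilon$, which in turn must be small enough (relative to $R_1\epsilon_0\gamma^{(n-2)/2}$) not to spoil the comparison on $\bdy B_{R_1}$. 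Both devices close the argument and prove the lemma as stated; the paper's buys a much less restrictive explicit smallness condition on $c_0$, while yours is closer in spirit to the $h_2$-type corrections used elsewhere in Section \ref{section:Proof of Theorem}. One small point common to both arguments and worth making explicit: $\gamma$, $t_0$, and hence $\epsilon_0$ and $R_1$, depend on $c_0$ through the limit profile \eqref{tibdd}, so the claimed uniformity of $\epsilon$ rests on the uniform bound on the $c_i$.
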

	\begin{proof}
		If $R_1\gg R$ is any fixed large constant, the for $i$ sufficiently large, $w^{\lambda_0} + h^{\lambda_0}>0$ in $\Sigma_{\lambda_0}\intersect B_{R_1}$. This is because of the properties of $U_R - U_R^{\lambda_0}$, the convergence of $w^{\lambda_0}$ to $U_R - U_R^{\lambda_0}$ and the estimate $h^{\lambda_0} = \circ(1) \abs y^{2-n}$. We only need to show positivity of $w^{\lambda_0} + h^{\lambda_0}$ on $\Sigma_{\lambda_0} \setminus B_{R_1}$. \\
		By performing elementary estimates it is easy to see that there exists $\epsilon_0(\gamma, t_0, \lambda_0)>0$ such that 		
		
		\begin{equation*}
			U_R^{\lambda_0}(y)
				\leq (1 - 5\epsilon_0) \gamma^{\frac{n - 2}{2}}\abs{ y - e_n}^{2-n},
				\qquad
				\abs y\geq R_1.
		\end{equation*}
		By increasing $R_1$ if necessary, we may simultaneously achieve
		\begin{equation}\label{eq:Start-Moving-Spheres-UR-Lower-Bound-Ti-Bounded}
			U_R(y)\geq \left(1 - \frac{\epsilon_0}{2}\right)
			\gamma^{\frac{n - 2}{2}}\abs{ y - e_n}^{2-n},
			\qquad
			\abs y = R_1.
		\end{equation}
		As an immediate consequence of these inequalities and the convergence of $v_{R,i}$ to $U_R$, if $i$ is sufficiently large we have
		
		\begin{equation}\label{eq:vRLambda0UpperBound}
			v_{R,i}^{\lambda_0}(y)
				\leq
				(1 - 4\epsilon_0) \gamma^{\frac{n - 2}{2}} \abs{y - e_n}^{2-n}
				\qquad
				y\in \Sigma_{\lambda _0} \setminus B_{R_1}.
		\end{equation}
		Now  suppose
		
		\begin{equation}\label{eq:c0SmallnessAssumption}
			c_0< (n - 2)(2\gamma)^{-1}(1 - \epsilon_0)^{-2/(n - 2)}.
		\end{equation}
		We show that if $i$ is sufficiently large, then
			
			\begin{equation}\label{eq:vRLowerBound}
				v_{R,i}^{\lambda_0}(y)
					>
					(1 - \epsilon_0) \gamma^{\frac{n - 2}{2}} \abs{ y - e_n}^{2-n}
					\qquad
					y\in \Sigma_{\lambda_0}\setminus B_{R_1}.
			\end{equation}
	By \eqref{eq:Start-Moving-Spheres-UR-Lower-Bound-Ti-Bounded} and the convergence of $v_{R,i}$ to $U_R$, if $i$ is sufficiently large, then
			
			\begin{equation*}
				v_{R,i}(y) >(1 - \epsilon_0) \gamma^{\frac{n - 2}{2}} \abs{ y - e_n}^{2-n}
				\qquad
				y\in \Sigma_{\lambda_0} \intersect \bdy B_{R_1}.
			\end{equation*}
			Therefore,
			
			\begin{equation*}
				f_i(y) = v_{R,i}(y) - (1 - \epsilon_0)\gamma^{\frac{n - 2}{2}}\abs{ y - e_n}^{2-n}
			\end{equation*}
			is superharmonic in $\Sigma_{\lambda_0} \setminus B_{R_1}$ and positive on $\overline{\Sigma_{\lambda_0}}\intersect\bdy B_{R_1}$. Moreover, by \eqref{eq:vRi-Upper-Boundary-Ti-Bounded}, if $i$ is sufficiently large,
			
			\begin{equation*}
				f_i(y) \geq C(n,\Lambda) \sqrt i \abs {y- e_n}^{2-n}
				\qquad
				y\in \bdy \Sigma_{\lambda_0} \intersect \{\abs y = \epsilon_i \Gamma_i\}.
			\end{equation*}
			By the maximum principle, if $f_i$ achieves a nonpositive minimum on $\overline{\Sigma_{\lambda_0}}\setminus B_{R_1}$, it must occur on $\bdy '\Sigma_{\lambda_0}\setminus B_{R_1}$. However, this is impossible. Indeed, suppose $y_i^*\in \bdy'\Sigma_{\lambda_0} \setminus B_{R_1}$ satisfies
			
			\begin{equation}\label{eq:NonpositiveMin}
				\min_{\overline{\Sigma_{\lambda_0}}\setminus B_{R_1}} f_i(y)
				=
				f_i(y_i^*)
				\leq 0.
			\end{equation}
			Since $y_{in}^* = 0$ we have
			
			\begin{equation}\label{eq:StartMovingSpheresBoundaryEquation}
				\frac{\partial f_i}{\partial y_n} (y_i^*)
					=
					c_iv_i(y_i^*)^{\frac{n}{n - 2}}
					- (n - 2)(1 - \epsilon_0)\gamma^{\frac{n - 2}{2}} \abs{ y_i^* - e_n}^{-n}.
			\end{equation}
			If $c_0\leq 0$ either $c_i<0$ or both $c_i\geq 0$ and $c_i = \circ(1)$. If $c_i<0$ then $\frac{\partial f}{\partial y_n}(y_i^*)<0$. If $0\leq c_i = \circ(1)$ then by \eqref{eq:NonpositiveMin} and \eqref{eq:StartMovingSpheresBoundaryEquation}, if $i$ is sufficiently large then $\frac{\partial f_i}{\partial y_n}(y_i^*)<0$. Finally, if $c_0>0$ the using \eqref{eq:NonpositiveMin} once more along with the smallness assumption \eqref{eq:c0SmallnessAssumption} we obtain $\frac{\partial f}{\partial y_n}(y_i^*)<0$ for $i$ sufficiently large. In any case, $\frac{\partial f_i}{\partial y_n}(y_i^*)<0$ so $y_i^*$ is not a minimizer for $f_i$.
	\end{proof}

\begin{proof}[Proof of Proposition \ref{lem:Vanishing-Ti-Bounded}]
With Lemma \ref{lem:Start-Moving-Spheres-Ti-Bounded} proven, the moving sphere process can start at $\lambda = \lambda_0$.  Since $h^\lambda$ satisfies \eqref{eq:hLambda-o(1)} and \eqref{eq:Desired-Differential-Inequalities-Ti-Bounded} , we can show that for

\begin{equation*}
	\bar\lambda
		=
		\sup\{ \lambda \in [\lambda_0, \lambda_1]:
		w^\mu(y) + h^\mu(y) \geq0 \; \text{ in } \; \Sigma_{\mu}\:
		\text{ for all } \lambda_0\leq \mu\leq \lambda_1\},
\end{equation*}
we have $\bar \lambda = \lambda_1$. This contradicts the convergence of $v_{R,i}$ to $U_R$.
\end{proof}

\subsection{Improved Vanishing Rate for $\abs{\Grad K_i(x_i')}$}
\label{subsection:Fast-Vanishing-Ti-Bounded}

In this subsection we derive a fast vanishing rate for $\delta_i$.

\begin{prop}\label{lem:Fast-Vanishing-Ti-Bounded}
	There exists a constant $C>0$ such that
	
	\begin{equation*}
		\Gamma_i^{n - 3} \delta_i \leq C.
	\end{equation*}
\end{prop}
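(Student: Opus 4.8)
The plan is to argue by contradiction, following the scheme of Proposition \ref{lem:Fast-Vanishing-Ti-Unbounded} but grafting onto it the boundary-control device (the reflected Green's function together with the auxiliary test function $h_2$) introduced in the proof of Proposition \ref{lem:Vanishing-Ti-Bounded}. Suppose the conclusion fails; then, after passing to a subsequence, $\ell_i := \delta_i^{1/(n-3)}\Gamma_i\to\infty$, and $\Gamma_i^{n-3}\delta_i = \ell_i^{n-3}\to\infty$. As in Proposition \ref{lem:Vanishing-Ti-Bounded} we assume with no loss of generality that $\delta_i^{-1}\Grad K_i(x_i')\to e=(1,0,\dots,0)$, and we work with the translated functions $v_{R,i}(y)=v_i(y-Re)$, their Kelvin inversions $v_{R,i}^\lambda$, the limit $U_R$ and $U_R^\lambda$, over $\Sigma_\lambda=\Omega_i\setminus B_\lambda$ with $\lambda\in[\lambda_0,\lambda_1]=[R,R+2]$ and $\lambda^*=(\gamma^2+t_0^2+R^2)^{1/2}$; here $w^\lambda=v_{R,i}-v_{R,i}^\lambda$ still satisfies \eqref{eq:wLambda-Equations-Ti-Bounded}. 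The first task is to upgrade the scalar-curvature estimates to the faster-vanishing regime. Using Taylor's theorem, assumption \ref{item:K1} (which at $x_i'$ gives $\abs{\Grad^j K_i(x_i')}\le C_0\delta_i^{(n-2-j)/(n-3)}$ for $1\le j\le n-2$) and $\ell_i\to\infty$, exactly as in Lemma \ref{lem:Scalar-Curvature-Function-Fast-Estimates} one obtains
\begin{equation*}
	\left\{
		\begin{array}{ll}
			H_i(y^\lambda-Re)-H_i(y-Re)\le -C_1\Gamma_i^{-1}\delta_i(\abs y-\lambda)
			& y\in\Omega_\lambda\\
			\abs{H_i(y^\lambda-Re)-H_i(y-Re)}\le C_2\Gamma_i^{-1}\delta_i(\abs y-\lambda)\displaystyle\sum_{j=0}^{n-3}\ell_i^{-j}\abs y^{j}
			& y\in\Sigma_\lambda,
		\end{array}
	\right.
\end{equation*}
and hence, via Lemma \ref{lem:vRi-Estimates}, that $Q^\lambda(y)\le -a_1\Gamma_i^{-1}\delta_i(\abs y-\lambda)(1+\abs{y-\lambda e}^2)^{-(n+2)/2}$ on $\Omega_\lambda$ and $\abs{Q^\lambda(y)}\le a_2\Gamma_i^{-1}\delta_i(\abs y-\lambda)\sum_{j=0}^{n-3}\ell_i^{-j}\abs y^{j-2-n}$ on $\Sigma_\lambda$, for $\lambda$-independent constants $a_1,a_2>0$.

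Next I would construct the test function $h^\lambda=h_1+h_2$, fusing the two mechanisms already present in the paper. For $h_1$: as in Subsection \ref{subsection:Fast-Grad-Ki-Vanishing-Ti-Unbounded}, the naive Green's potential of $Q^\lambda$ need not be non-positive, so I first manufacture from $Q^\lambda$ a dominating density $\Hat Q^\lambda$ of the form \eqref{eq:QLambdaHat-Definition} — negative and bounded below by a fixed multiple of the pointwise upper bound for $Q^\lambda$ on a narrow cone $\mathcal C_\lambda\subset\Omega_\lambda$, controlling $\abs{Q^\lambda}$ everywhere, and arranged so that $Q^\lambda-\Hat Q^\lambda\le0$ throughout $\Sigma_\lambda$ — and then, as in Subsection \ref{subsection:Vanishing-Ti-Bounded}, I integrate it against the reflected Green's function $\overline G(y,\eta)=G(y,\eta)+G(\bar y,\eta)$, setting $h_1(y)=\int_{\Sigma_\lambda}\overline G(y,\eta)\Hat Q^\lambda(\eta)\,d\eta$, so that $-\lap h_1=\Hat Q^\lambda$ in $\Sigma_\lambda$ while $\partial h_1/\partial y_n\ident0$ on $\bdy'\Sigma_\lambda$. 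The estimates for $h_1$ — namely $h_1=\circ(1)\abs y^{2-n}$ in $\Sigma_\lambda$, $h_1$ very negative near $B_{4\lambda}$, and $\abs{h_1}$ controlled by fixed powers of $\lambda$ — are proven as in Lemma \ref{lem:Fast-hLambda-Estimates}. For $h_2$ I would reuse verbatim $h_2(y)=-a\Gamma_i^{-1}y_n\abs y^{-n}g(\abs y)$ with $g$ as in \eqref{eq:gDefinition}: it is non-positive, vanishes on $\bdy\Sigma_\lambda\cap(\bdy B_\lambda\cup\bdy\bb R_+^n)$, satisfies $\lap h_2\le0$ in $B_{3\lambda}$ with a controlled positive part outside (see \eqref{eq:Laplacian-h2-Estimates}), and has $\partial h_2/\partial y_n\le -\bar a\Gamma_i^{-1}\lambda^{-n}(\abs y-\lambda)$ near $B_{3\lambda}$ on $\bdy'\Sigma_\lambda$ (see \eqref{eq:h2-Boundary-Derivative-Estimate}) — precisely what is needed to absorb the bad boundary term $c_i\xi_2\abs{h_1}$ produced by $h_1$ when $c_i>0$. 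Fixing $R$ large, then $a$ small and $\epsilon>0$ small, one checks using Lemma \ref{lem:Mean-Value-Coefficient-Estimates-Ti-Unbounded}, the inequality $Q^\lambda-\Hat Q^\lambda\le0$, and the $h_2$ estimates that, whenever $c_0<\epsilon$,
\begin{equation*}
	L_i(w^\lambda+h^\lambda)\le0 \ \text{ in } \ \Sigma_\lambda\cap\mathcal O_\lambda
	\qquad\text{and}\qquad
	B_i(w^\lambda+h^\lambda)\le0 \ \text{ in } \ \bdy'\Sigma_\lambda\cap\overline{\mathcal O}_\lambda,
\end{equation*}
together with $h^\lambda=\circ(1)\abs y^{2-n}$ in $\Sigma_\lambda$.

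The final step is the moving-sphere argument itself. As in Lemma \ref{lem:Start-Moving-Spheres-Ti-Bounded}, the process can start at $\lambda_0=R$: inside a large ball $B_{R_1}$ use the convergence $w^{\lambda_0}\to U_R-U_R^{\lambda_0}>0$ and $h^{\lambda_0}=\circ(1)\abs y^{2-n}$; outside $B_{R_1}$ compare $v_{R,i}$ with $(1-\epsilon_0)\gamma^{(n-2)/2}\abs{y-e_n}^{2-n}$ using superharmonicity and the maximum principle, ruling out a minimum of the comparison function on $\bdy'\Sigma_{\lambda_0}$ via the sign of the Neumann data (which uses the smallness of $c_0$, and, when $c_0\le0$, simply $\partial h_2/\partial y_n\le0$). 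One then sets
\begin{equation*}
	\bar\lambda=\sup\{\lambda\in[\lambda_0,\lambda_1]:(w^\mu+h^\mu)(y)\ge0 \ \text{ in } \ \Sigma_\mu \ \text{ for all } \ \lambda_0\le\mu\le\lambda\},
\end{equation*}
and, exactly as in the proof of Proposition \ref{lem:Vanishing-Ti-Unbounded}, uses \eqref{eq:vRi-Upper-Boundary-Ti-Bounded} and $\abs{h^{\bar\lambda}}=\circ(1)\abs y^{2-n}$ on the outer boundary, the strong maximum principle, and Hopf's lemma to conclude $\bar\lambda=\lambda_1>\lambda^*$. Combined with \eqref{eq:Critical-Position-Inequalities-Ti-Bounded} and $h^\lambda=\circ(1)\abs y^{2-n}$, this contradicts the convergence $v_{R,i}\to U_R$, proving Proposition \ref{lem:Fast-Vanishing-Ti-Bounded}.

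I expect the main obstacle to be the simultaneous control of two competing defects of the test function. On the one hand $h_1$ cannot be made non-positive, because $\Hat Q^\lambda$ necessarily inherits the growing tail $\sum_{j\ge1}\ell_i^{-j}\abs y^{j}$ of the bound for $Q^\lambda$; this forces $h_1$ to contribute a genuinely positive interior term $H_i(y-Re)\xi_1(y)h_1(y)$ to the equation for $w^\lambda+h^\lambda$ as well as the positive boundary term $c_i\xi_2\abs{h_1}$. On the other hand $h_2$ — introduced precisely to kill the latter — contributes its own positive interior part $\lap h_2$ outside $B_{3\lambda}$. Balancing all of these against the genuinely negative pieces requires tracking the exact powers of $\lambda$ (hence of $R$), and of $\Gamma_i$, $\delta_i$, $\ell_i$ and $\epsilon_i$, that appear in every estimate, and fixing the parameters in the order $R\to\infty$, then $a\to0$, then $\epsilon\to0$; this bookkeeping, which genuinely merges the two separate constructions of Subsections \ref{subsection:Fast-Grad-Ki-Vanishing-Ti-Unbounded} and \ref{subsection:Vanishing-Ti-Bounded}, is the technical crux of the argument.
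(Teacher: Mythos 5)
Your overall skeleton (contradiction with $\ell_i\to\infty$, the modified density $\Hat Q^\lambda$ with $Q^\lambda-\Hat Q^\lambda\le 0$, the reflected Green's function $\overline G$ so that $\partial h_1/\partial y_n\ident 0$ on $\bdy'\Sigma_\lambda$, and the moving-sphere closing argument) is exactly the paper's. The genuine gap is in your second test function: you propose to reuse \emph{verbatim} $h_2(y)=-a\Gamma_i^{-1}y_n\abs y^{-n}g(\abs y)$ with $g$ as in \eqref{eq:gDefinition}, and this choice cannot satisfy \eqref{eq:Desired-Differential-Inequalities-Ti-Bounded} in the fast-vanishing regime. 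Indeed, by \eqref{eq:Laplacian-h2-Estimates} the positive part of $\lap h_2$ is of size $\bar a\Gamma_i^{-1}\lambda^{1-n}$ (resp. $\bar a\Gamma_i^{-1}\abs y^{-2-n}$) outside $B_{3\lambda}$, with \emph{no} factor $\delta_i$; but every negative quantity available to absorb it --- $(Q^\lambda-\Hat Q^\lambda)$ via \eqref{eq:QLambda-QLambdaHat} and the negative part of $h_1$ --- carries the factor $\Gamma_i^{-1}\delta_i$ with $\delta_i\to 0$ (Proposition \ref{lem:Vanishing-Ti-Bounded}). So for any fixed $a>0$ the interior inequality $L_i(w^\lambda+h^\lambda)\le 0$ fails for $i$ large. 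If you repair this by inserting the factor $\delta_i$ into $h_2$ (which the interior estimate forces), then $-\partial h_2/\partial y_n\lesssim\Gamma_i^{-1}\delta_i\lambda^{-1}\abs y^{-n}$ on $\bdy'\Sigma_\lambda\setminus B_{4\lambda}$, and this can no longer dominate the bad boundary term $c_i\xi_2\abs{h_1}\lesssim c_i\Gamma_i^{-1}\delta_i\abs y^{-n}\bigl(\lambda^{1+n}+\ell_i^{-1}\log\tfrac{\abs y}{\lambda}+\sum_{j=2}^{n-3}\ell_i^{-j}\abs y^{j-1}\bigr)$ coming from \eqref{eq:Fast-Abs-h1-Estimate-Ti-Bounded}: the bracket is unbounded on $\Sigma_\lambda$ (it grows in $\abs y$ up to $\epsilon_i\Gamma_i$), so no fixed smallness of $c$ and no bounded profile $g$ can beat it.

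This is precisely why the paper's proof of Proposition \ref{lem:Fast-Vanishing-Ti-Bounded} does \emph{not} recycle the $g$ of \eqref{eq:gDefinition}, but builds $i$-dependent profiles $g_i$ whose far-field behavior reproduces exactly the tail of $\abs{h_1}$ (a $\log\tfrac r\lambda$ term plus $\sum_{j=2}^{n-3}\ell_i^{\,\cdot}r^{j-1}$ terms, and additionally the constant $\lambda^{1+n}$ when needed), and splits into the cases $c_0<0$ and $c_0\ge 0$ with different scalings, $h_2=-a\Gamma_i^{-1}\delta_i\ell_i^{-1}y_n\abs y^{-n}g_i(\abs y)$ and $h_2=-a\Gamma_i^{-1}\delta_i y_n\abs y^{-n}g_i(\abs y)$ respectively. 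With that matching, $\lap h_2$ has the same $\Gamma_i^{-1}\delta_i\,(\text{tail})$ structure as $Q^\lambda-\Hat Q^\lambda$ and is absorbed after taking $R$ large, while $\partial h_2/\partial y_n$ dominates $\abs{c_i}\xi_2\abs{h_1}$ term by term once $c<\epsilon$. Your closing paragraph does flag the tension between the positive interior part of $\lap h_2$ and the boundary absorption, but the concrete construction you commit to does not resolve it; the missing idea is the $\delta_i$-rescaling of $h_2$ together with the replacement of $g$ by the tail-matching $g_i$ (and the attendant case distinction in $c_0$).
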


 The proof of Proposition \ref{lem:Fast-Vanishing-Ti-Bounded} is similar in spirit to the proof of Proposition \ref{lem:Fast-Vanishing-Ti-Unbounded}. The difference is that the proof of Proposition \ref{lem:Fast-Vanishing-Ti-Bounded} requires a second test function to control an unfavorable boundary term introduced by the first test function.

 \begin{proof}
As in the proof of Proposition \ref{lem:Fast-Vanishing-Ti-Unbounded}, the proof of Proposition \ref{lem:Fast-Vanishing-Ti-Bounded} is by contradiction and we pass to a subsequence for which both

 \begin{equation*}
 	\ell_i\to\infty
	\qquad
	\text{ and }
	\qquad
	\delta_i^{-1} \Grad K_i(x_i') \to e.
 \end{equation*}
 For $R\gg1$ fixed and to be determined, let $v_{R,i}$ be as in \eqref{eq:vRi-Definition-Ti-Bounded} and let $\Omega_i$, $\bdy'\Omega_i$ and $\bdy''\Omega_i$ be as in \eqref{eq:Omega-Definitions-Ti-Bounded}. As in the proof of Proposition \ref{lem:Vanishing-Ti-Bounded}, $v_{R,i}$ satisfies both \eqref{eq:vRi-Upper-Boundary-Ti-Bounded} and \eqref{eq:vRi-Differential-Equations-Ti-Bounded} and converges to $U_R(y)$ in $C^2$ over compact subsets of $\overline{\bb R_+^n}$, where $U_R$ is given by \eqref{eq:Standard-Bubble-Ti-Bounded}. Letting $U_R^\lambda$ and $v_{R,i}^\lambda$ denote the Kelvin inversions of $U_R$ and $v_{R,i}$ as in \eqref{eq:Kelvin-Inversions-Ti-Bounded}, we still have \eqref{eq:Critical-Position-Inequalities-Ti-Bounded}. We only consider $\lambda$ between $\lambda_0 = R$ and $\lambda_1 = R+2$. Letting $w^\lambda$ be as in \eqref{eq:wLambda-Definition-Ti-Bounded}, we still have \eqref{eq:wLambda-Equations-Ti-Bounded}, so we need to construct $h^\lambda$ that satisfies both \eqref{eq:hLambda-o(1)} and \eqref{eq:Desired-Differential-Inequalities-Ti-Bounded}. We start with some helpful estimates.

 \begin{lem}\label{lem:Fast-Scalar-Curvature-Estimates-Ti-Bounded}
 	There exist positive constants $C_1$ and $C_2$ such that for $i$ sufficiently large, both
	
	\begin{equation*}
		H_i(y^\lambda - Re) - H_i(y - Re)
			\leq
			-C_1 \Gamma_i^{-1} \delta_i(\abs y - \lambda)
			\qquad
			y\in
			\Omega_\lambda.
	\end{equation*}
	and
	
	\begin{equation*}
		\abs{H_i(y^\lambda - Re) - H_i(y - Re)}
			\leq
			\left\{
				\begin{array}{ll}
					C_2\Gamma_i^{-1} \delta_i (\abs y - \lambda)
					&
					y\in\Omega_\lambda
					\\
					C_2\Gamma_i^{-1} \delta_i (\abs y - \lambda)
					\sum_{j = 0}^{n - 3} \ell_i^{-j} \abs y^j
					&
					y\in \Sigma_\lambda\setminus\Omega_\lambda.
				\end{array}
			\right.
	\end{equation*}
 \end{lem}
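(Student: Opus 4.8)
The plan is to follow the proof of Lemma~\ref{lem:Scalar-Curvature-Function-Fast-Estimates} essentially verbatim, the only new feature being the split according to whether $y$ lies in $\Omega_\lambda$. In fact I would first establish the weaker bound
\[
	\abs{H_i(y^\lambda - Re) - H_i(y - Re)}
		\leq C\Gamma_i^{-1}\delta_i(\abs y - \lambda)\sum_{j = 0}^{n - 3}\ell_i^{-j}\abs y^j
\]
on all of $\Sigma_\lambda$, and then observe that on $\Omega_\lambda$ one has $\abs y \leq 2\lambda$, so the sum is $1 + o(1) \leq 2$ for $i$ large; this gives the $\Omega_\lambda$ part of the second inequality. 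Throughout I use $H_i(z) = K_i(x_i' + \Gamma_i^{-1}z)$, the normalization $\delta_i^{-1}\Grad K_i(x_i') \to e$ (from \ref{item:K3}), and the flatness bounds \ref{item:K1}.

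The starting point is the line-integral identity
\[
	H_i(y^\lambda - Re) - H_i(y - Re)
		=
		\Gamma_i^{-1}(y^\lambda - y) \cdot
		\int_0^1 \Grad K_i(q_t)\; dt,
	\qquad
	q_t = x_i' + \Gamma_i^{-1}\bigl((1 - t)(y - Re) + t(y^\lambda - Re)\bigr).
\]
Since $\Gamma_i \to \infty$ and the arguments have size $O(\epsilon_i\Gamma_i)$, every $q_t$ lies in $\overline{B_1^+}$ for $i$ large, so the hypotheses on $K$ apply there. Writing $q_t = x_i' + w$ with $\abs w \leq 2\Gamma_i^{-1}\abs y$ (using $\abs y \geq \lambda \geq R$ and $\abs{y^\lambda} \leq \lambda$) and Taylor-expanding $\Grad K_i$ about $x_i'$ to order $n - 4$ with Lagrange remainder, assumption \ref{item:K1} gives $\abs{\Grad^{j + 1}K_i(x_i')} \leq C_0\delta_i^{(n - 3 - j)/(n - 3)}$ for $0 \leq j \leq n - 3$ and $\abs{\Grad^{n - 2}K_i} \leq C_0$. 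Because $\delta_i^{(n - 3 - j)/(n - 3)}\Gamma_i^{-j} = \delta_i\ell_i^{-j}$ for $1 \leq j \leq n - 4$ and $\Gamma_i^{-(n - 3)} = \delta_i\ell_i^{-(n - 3)}$, summing the Taylor terms yields
\[
	\abs{\Grad K_i(q_t) - \Grad K_i(x_i')}
		\leq C\delta_i\sum_{j = 1}^{n - 3}\ell_i^{-j}\abs y^j,
	\qquad\text{hence}\qquad
	\abs{\Grad K_i(q_t)} \leq C\delta_i\sum_{j = 0}^{n - 3}\ell_i^{-j}\abs y^j .
\]
Combining this with $\abs{y^\lambda - y} = \abs y^{-1}(\abs y - \lambda)(\abs y + \lambda) \leq 2(\abs y - \lambda)$ and the integral identity gives the weaker bound on $\Sigma_\lambda$ and, by the remark above, the $\Omega_\lambda$ part of the second inequality.

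For the one-sided estimate on $\Omega_\lambda$ I would peel off the constant-gradient term. On $\Omega_\lambda$ the condition $y_1 > 2\abs{(y_2, \dots, y_n)}$ forces $y_1 > \frac{2}{\sqrt 5}\abs y$, so $y \cdot \bigl(\delta_i^{-1}\Grad K_i(x_i')\bigr) \geq \frac34\abs y$ for $i$ large; since $y^\lambda - y = -\abs y^{-2}(\abs y^2 - \lambda^2)y$, the term $\Gamma_i^{-1}(y^\lambda - y)\cdot\Grad K_i(x_i')$ is $\leq -\frac34\Gamma_i^{-1}\delta_i(\abs y - \lambda)$. The remaining contribution is at most $\Gamma_i^{-1}\abs{y^\lambda - y}\sup_t\abs{\Grad K_i(q_t) - \Grad K_i(x_i')} \leq C\Gamma_i^{-1}(\abs y - \lambda)\delta_i\sum_{j = 1}^{n - 3}(2\lambda)^j\ell_i^{-j} = o(1)\Gamma_i^{-1}\delta_i(\abs y - \lambda)$, since $\ell_i \to \infty$ and $\lambda$ is bounded on $\Omega_\lambda$. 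Hence the full expression is $\leq -C_1\Gamma_i^{-1}\delta_i(\abs y - \lambda)$ for $i$ large.

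The only slightly delicate point is the exponent bookkeeping — in particular checking that the Lagrange remainder term, in which $\Grad^{n - 2}K_i$ is merely bounded and not small, still matches the pattern $\delta_i\ell_i^{-j}\abs y^j$ because $\Gamma_i^{-(n - 3)} = \delta_i\ell_i^{-(n - 3)}$ — together with making sure the sign of the one-sided bound on $\Omega_\lambda$ is preserved after absorbing the higher-order terms, which it is because those terms carry a factor $\ell_i^{-1} \to 0$ on the bounded set $\Omega_\lambda$.
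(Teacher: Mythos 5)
Your proposal is correct and follows exactly the route the paper intends: the paper omits this proof as "similar to Lemma \ref{lem:Scalar-Curvature-Function-Fast-Estimates}," which itself is said to follow from Taylor's theorem and the assumptions on $K$, and your argument supplies precisely those details — the line-integral/Taylor expansion of $\Grad K_i$ about $x_i'$, the exponent bookkeeping $\delta_i^{(n-3-j)/(n-3)}\Gamma_i^{-j} = \delta_i\ell_i^{-j}$ (including the Lagrange remainder via $\Gamma_i^{-(n-3)} = \delta_i\ell_i^{-(n-3)}$), and the cone condition in $\Omega_\lambda$ together with $\delta_i^{-1}\Grad K_i(x_i')\to e$ for the one-sided bound. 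The constants you obtain are independent of $i$ and $\lambda$ (the $o(1)$ absorption on $\Omega_\lambda$ only costs taking $i$ large depending on the fixed $R$), so the lemma is established as stated.
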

 The proof of Lemma \ref{lem:Fast-Scalar-Curvature-Estimates-Ti-Bounded} is similar to the proof of Lemma \ref{lem:Scalar-Curvature-Function-Fast-Estimates} and is omitted. \\

By Lemma \ref{lem:Fast-Scalar-Curvature-Estimates-Ti-Bounded} and Lemma \ref{lem:vRi-Estimates}, we obtain positive $\lambda$-independent constants $a_1$ and $a_2$ such that both

 \begin{equation*}
 	Q^\lambda(y)
		\leq
		- a_1 \Gamma_i^{-1} \delta_i(\abs y - \lambda)
		(1 + \abs{ y - \lambda e}^2)^{-\frac{n+2}{2}},
		\qquad
		y\in \Omega_\lambda
 \end{equation*}
 and
 \begin{equation*}
 	\abs{Q^\lambda(y) }
		\leq
		a_2 \Gamma_i^{-1} \delta_i(\abs y - \lambda)
		\sum_{j = 0}^{n - 3} \ell_i^{-j} \abs y^{j - 2-n},
		\qquad
		y\in \Sigma_\lambda.
 \end{equation*}
Let $\Hat Q^\lambda$ be as in \eqref{eq:QLambdaHat-Definition}. The estimates in \eqref{eq:QLambdaHat-Upper-Bounds}, \eqref{eq:Abs-QLambdaHat-Estimate} and \eqref{eq:QLambda-QLambdaHat} are still satisfied. Define

 \begin{equation*}
 	h_1(y)
		=
		\int_{\Sigma_\lambda} \overline G(y, \eta) \Hat Q^\lambda(\eta) \; d\eta,
		\qquad
		y\in \Sigma_\lambda.
 \end{equation*}
 Then $h_1$ satisfies

 \begin{equation*}
 	\left\{
		\begin{array}{ll}
		 	- \lap h_1(y) = \Hat Q^\lambda(y)
			&
			y\in \Sigma_\lambda
			\\
			h_1(y) = 0
			&
			y\in \bdy\Sigma_\lambda \intersect\bdy B_\lambda
			\\
			\frac{\partial h_1}{\partial y_n}(y) = 0
			&
			y\in \bdy'\Sigma_\lambda.
		\end{array}
	\right.
 \end{equation*}
 As in the proof of Lemma \ref{lem:Fast-hLambda-Estimates}, we still have positive constants $C_1$ and $C_2$ such that both

 \begin{equation}\label{eq:Fast-h1-Estimates-Ti-Bounded}
 	h_1(y)
		\leq
		\left\{
			\begin{array}{ll}
				- C_1 \Gamma_i^{-1}\delta_i(\abs y - \lambda)\lambda^{-n} \log \lambda
				&
				y\in \overline \Sigma_\lambda \intersect \overline B_{4\lambda}
				\\
				C_2 \Gamma_i^{-1} \delta_i \abs y^{2-n}
				\left(
					\ell_i^{-1} \log \frac{\abs y }{\lambda}
					+
					\sum_{j = 2}^{n - 3} \ell_i^{-j} \abs y^{j-1}
				\right)
				&
				y\in \overline \Sigma_\lambda \setminus B_{4\lambda}
			\end{array}
		\right.
 \end{equation}
 and
 \begin{eqnarray}\label{eq:Fast-Abs-h1-Estimate-Ti-Bounded}
 	\abs{h_1(y)}
		& \leq &
		\left\{
			\begin{array}{ll}
				C_2\Gamma_i^{-1} \delta_i \lambda^2 (\abs y - \lambda)
				&
				y\in \overline \Sigma_\lambda \intersect \overline B_{4\lambda}
				\\
				C_2\Gamma_i^{-1} \delta_i \abs y^{2-n}
				\left(
					\lambda^{1+n} + \ell_i^{-1} \log\frac{\abs y}{\lambda}
					+\sum_{j = 2}^{n - 3} \ell_i^{-j} \abs y^{j-1}
				\right)
				&
				y\in \overline \Sigma_\lambda \setminus B_{4\lambda}
			\end{array}
		\right.
		\notag
		\\
		& = &
		\circ(1) \abs y^{2-n}.
 \end{eqnarray}
 %

 For the construction of $h_2$, the second part of the test function, we consider separately the case $c_0<0$ and the case $c_0\geq 0$. \\
 \emph{Case 1:} $c_0<0$. \\
 In this case for $i$ large we have $c_i<0$. Let $g_i:[\lambda,\infty)\to [0,\infty)$ be given by

 \begin{equation*}
 	g_i(r)
		=
		\left\{
			\begin{array}{ll}
				\lambda^{1-n} r^n - \frac{n - 1}{2\lambda} r^2 + \frac \lambda 2(n - 3)
				&
				\lambda \leq r\leq 3\lambda
				\\
				\text{smooth positive connection}
				&
				3\lambda \leq r\leq 4\lambda
				\\
				\log \frac r \lambda + \sum_{j = 2}^{n -3} \ell_i^{1-j} r^{j -1}
				&
				4\lambda \leq r,
			\end{array}
		\right.
 \end{equation*}
 where `smooth positive connection' means there is a positive constant $M(n, \Lambda, \lambda)$ such that both $g_i(r) \geq \frac 1 M$ for $3\lambda \leq r \leq 4\lambda$ and $\norm{g_i}_{C^2([3\lambda,4\lambda])}\leq M$. By elementary estimates we have

 \begin{equation*}
 	g_i''(r) - \frac{n - 1}{r} g_i'(r)
		\geq
		\left\{
			\begin{array}{ll}
				0
				&
				\lambda \leq r\leq 3\lambda
				\\
				- M
				&
				3\lambda \leq r \leq 4\lambda
				\\
				- C\sum_{j = 1}^{n - 3} \ell_i^{1-j} r^{j -3}
				&
				4\lambda \leq r.
			\end{array}
		\right.
 \end{equation*}
 Set

 \begin{equation*}
 	h_2(y)
		=
		-a \Gamma_i^{-1} \delta_i \ell_i^{-1} y_n \abs y^{-n} g_i(\abs y)
		\leq
		0
		\qquad
		\abs y \geq \lambda,
 \end{equation*}
 where $a$ is a positive constant which is to be determined. By direct computation and using the properties of $g_i$ we have both

 \begin{eqnarray}\label{eq:Laplacian-h2-Estimate-c<0}
 	\lap h_2(y)
		& = &
		- a \Gamma_i^{-1} \delta_i \ell_i^{-1} y_n \abs y^{-n}
		\left(
			g_i''(\abs y ) - \frac{n - 1}{\abs y} g_i'(\abs y)
		\right)
		\notag
		\\
		& \leq &
			\left\{
				\begin{array}{ll}
					0
					&
					\lambda \leq \abs y \leq 3\lambda
					\\
					\bar a M \Gamma_i^{-1} \delta_i \ell_i^{-1} \lambda^{1-n}
					&
					3\lambda \leq \abs y \leq 4\lambda
					\\
					\bar a \Gamma_i^{-1} \delta_i \abs y^{-2-n}
					\sum_{j = 1}^{n - 3} \ell_i^{-j} \abs y^j
					&
					4\lambda \leq \abs y,
				\end{array}
			\right.
 \end{eqnarray}
 and

 \begin{eqnarray*}
 	\frac{\partial h_2}{\partial y_n}(y)
		& = &
		- a \Gamma_i^{-1} \delta_i \ell_i^{-1} \abs y^{-n} g_i(\abs y)
		\\
		& \leq &
		\left\{
			\begin{array}{ll}
				0
				&
				y\in \bdy'\Sigma_\lambda\intersect \overline B_{4\lambda}
				\\
				- a \Gamma_i^{-1} \delta_i \abs y^{-n}
				\left(
					\ell_i^{-1} \log \frac{\abs y}{\lambda}
					+
					\sum_{j = 2}^{n -3} \ell_i^{-j} \abs y^{j-1}
				\right)
				&
				y\in \bdy'\Sigma_\lambda \setminus B_{4\lambda},
			\end{array}
		\right.
 \end{eqnarray*}
 where $\bar a$ denotes a constant of the form $C(n)a$. Moreover, by elementary estimates we have $h_2(y) = \circ(1) \abs y^{2-n}$.

Set $h^\lambda = h_1 + h_2$. By the estimates of $h_1$ and $h_2$ we have $h^\lambda(y) = \circ(1) \abs y^{2-n}$ in $\Sigma_\lambda$. It remains to show that $h^\lambda$ satisfies \eqref{eq:Desired-Differential-Inequalities-Ti-Bounded}. Clearly, $w^\lambda + h^\lambda$ vanishes on $\bdy \Sigma_\lambda \intersect \bdy B_\lambda$, so we only need to show the differential inequalities in \eqref{eq:Desired-Differential-Inequalities-Ti-Bounded}. Since $h_2\leq 0$ and since each of $h_2$ and $\frac{\partial h_1}{\partial y_n}$ vanish on $\bdy'\Sigma_\lambda$, we have

 \begin{equation*}
 	\left\{
		\begin{array}{ll}
		 	L_i(w^\lambda + h^\lambda)(y)
				\leq (Q^\lambda - \Hat Q^\lambda)(y) + H_i(y - Re)\xi_1(y)h_1(y)
				+ \lap h_2(y)
			&
			y\in \Sigma_\lambda
			\\
			B_i(w^\lambda + h^\lambda)(y)
				= \abs{c_i}\xi_2(y) h_1(y) + \frac{\partial h_2}{\partial y_n}(y)
			&
			y\in \bdy'\Sigma_\lambda.
		\end{array}
	\right.
 \end{equation*}
For $y\in \overline \Sigma_\lambda \intersect \overline B_{3\lambda}$, each of $h_1$ and $\lap h_2$ are nonpositive so we have both $L_i(w^\lambda + h^\lambda)(y)\leq 0$ for $y\in \overline \Sigma_\lambda \intersect \overline B_{3\lambda}$ and $B_i(w^\lambda + h^\lambda)(y)\leq 0$ for $y\in \bdy'\Sigma_\lambda \intersect \overline B_{3\lambda}$. For $y\in (\overline \Sigma_\lambda \intersect \overline B_{4\lambda}) \setminus B_{3\lambda}$, $h_1\leq 0$. In addition, using both the estimates of $Q^\lambda - \Hat Q^\lambda$ in \eqref{eq:QLambda-QLambdaHat} and \eqref{eq:Laplacian-h2-Estimate-c<0}, since $\ell_i^{-1} = \circ(1)$, for any choice of $a$ we have

 \begin{equation*}
 	L_i(w^\lambda+ h^\lambda)(y)
		\leq
		C\Gamma_i^{-1} \delta_i \lambda^{-1-n}(\bar a M \ell_i^{-1} \lambda^2 - a_2)
		\leq
		0
		\qquad
		y\in (\Sigma_\lambda \intersect \overline B_{4\lambda})\setminus B_{3\lambda}
 \end{equation*}
provided $i$ is sufficiently large. Moreover, since each of $h_1$ and $\frac{\partial h_2}{\partial y_n}$ are nonpositive for $\abs y \leq 4\lambda$ we have $B_i(w^\lambda  + h^\lambda)(y) \leq 0$ for $y\in \bdy'\Sigma_\lambda \intersect \overline B_{4\lambda}$. Finally, if $\abs y \geq 4\lambda$ we must account for the possibility that $h_1\geq 0$. By construction of $h_2$ and the estimates of $\xi_2$ and $h_1$ given in Lemma \ref{lem:Mean-Value-Coefficient-Estimates-Ti-Unbounded} and \eqref{eq:Fast-h1-Estimates-Ti-Bounded} respectively, after choosing $a(n,\Lambda)$ sufficiently large, we have

 \begin{equation*}
 	B_i(w^\lambda + h^\lambda)(y)
		\leq
		C(\abs{c_i} - \bar a) \Gamma_i^{-1}\delta_i \abs y^{-n}
		\left(	
			\ell_i^{-1} \log \frac{\abs y}{\lambda}
			+
			\sum_{j = 2}^{n - 3} \ell_i^{-j} \abs y^{j-1}
		\right)
		\leq
		0
		\qquad
		y\in (\bdy'\Sigma_\lambda \intersect \overline{\mathcal O}_\lambda)\setminus B_{4\lambda}.
 \end{equation*}
 For the interior inequality we have

 \begin{eqnarray*}
 	L_i(w^\lambda + h^\lambda)(y)
		& \leq &
		\Gamma_i^{-1}\delta_i \abs y^{-2-n}
		\left(
			- \bar a_2 \abs y
			+ C_1\ell_i^{-1} \left( \log\frac{\abs y}{\lambda} + \bar a\abs y - \bar a_2\abs y^2\right)
			\phantom{\sum_{j = 2}^{n - 3}}
		\right.
		\\
		& &
		\left.
			+ C_2 \sum_{j = 2}^{n - 3}\ell_i^{-j} \abs y^{j - 1}
			(1 + \bar a \abs y - \bar a_2 \abs y^2)
		\right)
		\qquad
		y\in (\Sigma_\lambda \intersect \mathcal O_\lambda) \setminus B_{4\lambda}.
 \end{eqnarray*}
 Therefore, by choosing $R = R(a, a_2)$ larger if necessary, we have $L_i(w^\lambda + h^\lambda)(y) \leq 0$ for $y\in (\Sigma_\lambda \intersect \mathcal O_\lambda)\setminus B_{4\lambda}$. Estimates \eqref{eq:hLambda-o(1)} and \eqref{eq:Desired-Differential-Inequalities-Ti-Bounded} are satisfied in the case $c_0<0$. \\
 \emph{Case 2:} $c_0\geq 0$. \\
 In this case, either $c_i>0$ or $0\leq -c_i = \circ(1)$. For this case we set

 \begin{equation*}
 	g_i(r)
		=
		\left\{
			\begin{array}{ll}
				\lambda^{1-n} r^n - \frac{n - 1}{2\lambda} r^2 + \frac{\lambda}{2} (n - 3)
				&
				\lambda \leq r\leq 3\lambda
				\\
				\text{ smooth positive connection }
				&
				3\lambda \leq r\leq 4\lambda
				\\
				\lambda^{1+n} + \ell_i^{-1} \log \frac{\abs y}{\lambda}
				+ \sum_{j = 2}^{n - 3} \ell_i^{-j} \abs y^{j - 1}
				&
				4\lambda \leq r,
			\end{array}
		\right.
 \end{equation*}
 where `smooth positive connection' means there is an $i$-independent constant $M(\lambda)>0$ such that both $g_i(r) \geq M^{-1}$ for $3\lambda \leq r\leq 4\lambda$ and $\norm{g_i}_{C^2([3\lambda, 4\lambda])}\leq M$. Since $g_i(\lambda) = 0$, $g_i(3\lambda) = C\lambda$ and $g_i''(r)>0$ for $\lambda \leq r\leq 3\lambda$, there is a constant $C>0$ such that $g_i(r) \geq C(r - \lambda)$ for $\lambda \leq r\leq 3\lambda$. Moreover, by direct computation and elementary estimates we have

 \begin{equation*}
 	g_i''(r) - \frac{n - 1}{r}g_i'(r)
		\geq
		\left\{
			\begin{array}{ll}
				0
				&
				\lambda \leq r\leq 3\lambda
				\\
				-M
				&
				3\lambda \leq r\leq 4\lambda
				\\
				- C\sum_{j = 1}^{n - 3}\ell_i^{-j} r^{j - 3}
				&
				4\lambda \leq r.
			\end{array}
		\right.
 \end{equation*}
 Now set

 \begin{equation*}
 	h_2(y)
		=
		- a \Gamma_i^{-1} \delta_i y_n \abs y^{-n} g_i(\abs y)
		\leq
		0
		\qquad \abs y \geq \lambda.
 \end{equation*}
 By direct computation and elementary estimates we have both

 \begin{eqnarray*}
 	\lap h_2(y)
		& = &
		- a\Gamma_i^{-1} \delta_i y_n \abs y^{-n}
			\left( g_i''(\abs y) - \frac{n - 1}{\abs y} g_i'(\abs y)\right)
		\\
		&\leq &
			\left\{
				\begin{array}{ll}
					0
					&
					\lambda \leq \abs y \leq 3\lambda
					\\
					\bar a M \Gamma_i^{-1} \delta_i \lambda^{1-n}
					&
					3\lambda \leq \abs y \leq 4\lambda
					\\
					\bar a \Gamma_i^{-1} \delta_i\abs y^{-2-n}
					\sum_{j = 1}^{n - 3} \ell_i^{-j} \abs y^j
					&
					4\lambda \leq \abs y
				\end{array}
			\right.
 \end{eqnarray*}
 and

 \begin{eqnarray}\label{eq:h2-Boundary-Derivative-Estimate-c>0}
 	\frac{\partial h_2}{\partial y_n}(y)
		& = &
		- a \Gamma_i^{-1} \delta_i \abs y^{-n} g_i(\abs y)
		\\
		&\leq &
		\left\{
			\begin{array}{ll}
				- \bar a \Gamma_i^{-1} \delta_i \lambda^{-n}(\abs y - \lambda)
				&
				y\in \bdy'\Sigma_\lambda \intersect \overline B_{3\lambda}
				\\
				- \bar a  \Gamma_i^{-1} \delta_i \lambda^{-n} M^{-1}
				&
				y\in( \bdy'\Sigma_\lambda \intersect \overline B_{4\lambda})
					\setminus B_{3\lambda}
				\\
				- \bar a \Gamma_i^{-1} \delta_i \abs y^{-n}
				\left(
					\lambda^{1+n} + \ell_i^{-1} \log \frac{\abs y}{\lambda}
					+
					\sum_{j = 2}^{n - 3} \ell_i^{-j} \abs y^{j - 1}
				\right)
				&
				y\in \bdy'\Sigma_\lambda \setminus B_{4\lambda},
			\end{array}
		\right.
		\notag
 \end{eqnarray}
where $\bar a $ denotes a constant of the form $Ca$.

Set $h^\lambda = h_1 + h_2$. Then $h^\lambda(y) = \circ(1) \abs y^{2-n}$ and $h^\lambda = 0$ on $\bdy \Sigma_\lambda \intersect \bdy B_\lambda$. We need to show that the differential inequalities in \eqref{eq:Desired-Differential-Inequalities-Ti-Bounded} hold so we consider

\begin{equation*}
	\left\{
		\begin{array}{ll}
			L_i(w^\lambda + h^\lambda)(y)
				\leq
				(Q^\lambda - \Hat Q^\lambda)(y)
				+
				H_i(y - Re)\xi_1(y) h_1(y)
				+
				\lap h_2
			&
			y\in \Sigma_\lambda
			\\
			B_i(w^\lambda + h^\lambda)(y)
				\leq
				\abs{c_i}\xi_2(y)\abs{h_1(y)} + \frac{\partial h_2}{\partial y_n}(y)
			&
			y\in \bdy'\Sigma_\lambda.
		\end{array}
	\right.
\end{equation*}
For $y\in \overline \Sigma_\lambda \intersect \overline B_{3\lambda}$ both of $h_1$ and $\lap h_2$ are nonpositive, so $L_i(w^\lambda+ h^\lambda)(y) \leq 0$ on this set. Moreover, in view of \eqref{eq:Fast-Abs-hLambda-Estimate-Ti-Unbounded} and \eqref{eq:h2-Boundary-Derivative-Estimate-c>0}, once $a$ is chosen we may choose $\epsilon>0$ depending on $n,\Lambda, \lambda,a$ such that

\begin{equation*}
	B_i(w^\lambda + h^\lambda)(y)
		\leq
		C\Gamma_i^{-1} \delta_i(\abs y - \lambda) (\abs {c_i} - \lambda^{-n} \bar a)
		\leq
		0
		\qquad
		y\in \bdy'\Sigma_\lambda\intersect \overline B_{3\lambda}
\end{equation*}
whenever $c_i <\epsilon$. For $y\in (\Sigma_\lambda \intersect \overline B_{4\lambda})\setminus B_{3\lambda}$, by choosing $a(n,\Lambda M, \lambda, a_2)$ small we have

\begin{equation*}
	L_i(w^\lambda + h^\lambda)(y)
		\leq
		C\Gamma_i^{-1} \delta_i \lambda^{-1-n}(\bar a M \lambda^2  - a_2)
		\leq
		0
		\qquad
		y\in (\Sigma_\lambda \intersect \overline B_{4\lambda})\setminus B_{3\lambda}.
\end{equation*}
For $y\in (\bdy'\Sigma_\lambda \intersect \overline B_{4\lambda})\setminus B_{3\lambda}$, by decreasing $\epsilon$ if necessary we have

\begin{equation*}
	B_i(w^\lambda + h^\lambda)(y)
		\leq
		C\Gamma_i^{-1} \delta_i \lambda(\abs{c_i} -\lambda^{-n}\bar a)
		\leq
		0
		\qquad
		y\in (\bdy'\Sigma_\lambda \intersect \overline B_{4\lambda})\setminus B_{3\lambda}.
\end{equation*}
Finally, for $y\in \Sigma_\lambda \setminus B_{4\lambda}$ by choosing $R$ larger if necessary we have

\begin{eqnarray*}
	L_i(w^\lambda + h^\lambda)(y)
		& \leq &
		C\Gamma_i^{-1} \delta_i \abs y^{-2-n}
		\left(
			- a_2\abs y
			+
			\ell_i\left(C\log \frac{\abs y}{\lambda} + \bar a \abs y - a_2 \abs y^2\right)
			\phantom{\sum_{j = 2}^{n -3}}
		\right.
		\\
		& &
		\left.
			+
			\sum_{j = 2}^{n -3} \ell_i^{-j} \abs y^{j -1}(C + \bar a \abs y - a_2\abs y^2)
		\right)
		\\
		& \leq &
		0
\end{eqnarray*}
The boundary inequality for $\abs y\geq 4\lambda$ is

\begin{equation*}
	B_i(w^\lambda + h^\lambda)(y)
		\leq
		C\Gamma_i^{-1} \delta_i(\abs{c_i} - \bar a) \abs y^{-n}g_i(\abs y)
		\leq
		0
		\qquad
		y\in (\bdy'\Sigma_\lambda \intersect \mathcal O_\lambda)\setminus B_{4\lambda}.
\end{equation*}
We have shown that $h^\lambda$ satisfies \eqref{eq:Desired-Differential-Inequalities-Ti-Bounded} when $c_0\geq 0$.

Arguing as in the proof of Lemma \ref{lem:Start-Moving-Spheres-Ti-Bounded} shows that the moving sphere process can start at $\lambda = \lambda_0$. Then arguing as in the proof of Proposition \ref{lem:Vanishing-Ti-Bounded} we obtain a contradiction to the convergence of $v_{R,i}$ to $U_R$. Proposition \ref{lem:Fast-Vanishing-Ti-Bounded} is established.
\end{proof}
	
%
\subsection{Completion of the Proof of Theorem \ref{thm:Main}}
\label{subsection:Completion-of-Proof-of-Theorem}
With a rapid vanishing rate for $\delta_i$ in hand, a final application of the method of moving spheres will prove theorem \ref{thm:Main}. The rapid vanishing rate of $\delta_i$ makes the construction of the test function simple.

\begin{proof}[Proof of Theorem \ref{thm:Main}]
we consider $v_i$, $U$ and their Kelvin inversions

\begin{equation*}
	v_i^\lambda(y)
		=
		\left(\frac{\lambda}{\abs y}\right)^{n - 2}
		v_i(y^\lambda)
	\qquad
	\text{ and }
	\qquad
	U^\lambda(y)
		=
		\left(\frac{\lambda}{\abs y}\right)^{n - 2}
		U(y^\lambda)
\end{equation*}
for $y\in \Sigma_\lambda = \Omega_i\setminus \overline B_\lambda$. In this case, with $\lambda^* = 1$ direct computation yields

\begin{equation*}
	\left\{
		\begin{array}{lll}
			(U- U^\lambda)(y) >0
			&
			y\in \bb R^n \setminus \overline B_\lambda
			&
			\text{ if } \lambda <\lambda^*
			\\
			(U- U^\lambda)(y) <0
			&
			y\in \bb R^n \setminus \overline B_\lambda
			&
			\text{ if } \lambda >\lambda^*,
		\end{array}
	\right.
\end{equation*}
and we consider $\lambda$ between $\lambda_0 = 1/2$ and $\lambda_1 = 2$. Set

\begin{equation*}
	w^\lambda(y)
		=
		v_i(y) - v_i^\lambda(y)
		\qquad
		y\in \Sigma_\lambda.
\end{equation*}
Then $w^\lambda$ satisfies equations \eqref{eq:wLambda-Equations-Ti-Unbounded} - \eqref{eq:Interior-Error-Function} with $R= 0$. We still need to construct a test function $h^\lambda$ such that \eqref{eq:Perturbation-Test-Function} and \eqref{eq:Desired-Differential-Inequalities} hold. Note that \eqref{eq:viLambda-Upper-Bound} still holds. By Proposition \ref{lem:Fast-Vanishing-Ti-Bounded} and  \eqref{eq:viLambda-Upper-Bound} we have

\begin{equation}\label{eq:Final-Proof-QLambda-Estimate}
	Q^\lambda(y)
		\leq
		C \Gamma_i^{2-n} \abs y^{-4}
		\qquad
		y\in \Sigma_\lambda. 	
\end{equation}
Let
\begin{equation*}
	h_1(y) = - a_1 \Gamma_i^{2-n} \lambda^{n + 2} (\lambda^{-1}- \abs y^{-1})
	\qquad \abs y \geq \lambda,
\end{equation*}
where $a_1$ is a positive constant which is to be determined. Routine computations show that $h_1$ satisfies

\begin{equation}\label{eq:h1BasicProperties}
	\left\{
		\begin{array}{ll}
			\lap h_1 \leq - a_1\Gamma_i^{2-n} \lambda^{n+2} \abs y^{-3}
			& y\in \Sigma_\lambda\\
			\frac{\partial h_1}{\partial y_n} = 0
			& y\in \bdy \Sigma_\lambda \intersect \bdy \bb R_+^n\\
			h_1(y) = 0
			& y\in \bdy \Sigma_\lambda \intersect \bdy B_\lambda.
		\end{array}
	\right.
\end{equation}
Moreover,

\begin{equation}\label{eq:Absh1Estimate}
	\abs{h_1(y)}
	\leq
	\left\{
		\begin{array}{ll}
			a_1\Gamma_i^{2-n} \lambda^n (\abs y- \lambda)
			&
			\lambda<\abs y \leq\lambda + 1
			\\
			a_1\Gamma_i^{2-n}\lambda^{n +1}
			&
			\lambda + 1\leq \abs y.
		\end{array}
	\right.
\end{equation}
In particular, $h_1(y) = \circ (1) \abs y^{2-n}$ for $\abs y \leq \epsilon_i \Gamma_i^{-1}$. \\
Next we define $h_2$. For $\lambda \leq r <\infty$, let $g(r)$ be a smooth positive function satisfying
\begin{equation*}
	g(r) =
	\left\{
		\begin{array}{ll}
			 r - \lambda + \frac{n - 1}{2\lambda}(r  - \lambda)^2
			 &
			 \lambda <r \leq 3\lambda
			 \\
			 \text{ smooth and positive }
			 &
			 3\lambda \leq r \leq 4\lambda
			 \\
			 \lambda^{-1}  - r^{-1}
			 &
			 4\lambda \leq r,
		\end{array}
	\right.
\end{equation*}
where `smooth and positive' means there exists a constant $M>0$ such that both

\begin{equation*}
	g(r) \geq \frac 1 M \qquad 3\lambda \leq r \leq 4\lambda
\end{equation*}
and

\begin{equation*}
	\norm{g}_{C^2([\lambda, \infty))}\leq M.
\end{equation*}
Define

\begin{equation*}
	h_2(y) = - a_2 \Gamma_i^{2-n} y_n \abs y^{-2} g(\abs y)\qquad y\in \Sigma_\lambda,
\end{equation*}
where $a_2$ is a positive constant to be determined. Routine computations yield
\begin{equation}\label{eq:h2BasicProperties}
	\left\{
		\begin{array}{ll}
			\lap h_2(y) = - a_2 \Gamma_i^{2-n} y_n \abs y^{-2}
			\left(
				g''(\abs y) + \frac{n - 3}{\abs y}g'(\abs y) - \frac{2(n - 2)}{\abs y^2} g(\abs y)
			\right)
			&
			y\in \bb R^n \setminus B_\lambda
			\\
			\frac{\partial h_2}{\partial y_n}(y)
				=
				- a_2 \Gamma_i^{2-n} \abs y^{-2} g(\abs y)
			&
			y\in \bdy \bb R_+^n\setminus B_{\lambda}
			\\
			h_2(y) = 0
			&
			y\in \bdy B_\lambda \union \bdy\bb R_+^n
			\\
			\abs{h_2(y)} = \circ(1) \abs y^{2-n}
			&
			\lambda \leq \abs y \leq \epsilon_i\Gamma_i
		\end{array}
	\right.
\end{equation}
Moreover,
\begin{eqnarray*}
	\abs{h_2(y)}
		& \leq &
		C_2 u_i(x_i)^{-2}\abs y^{-1} g(\abs y)\\
		&\leq  &
		C_2 Mu_i(x_i)^{-2} \abs y^{-1}\\
		& = &
		\circ(1) \abs y^{2-n}.
\end{eqnarray*}
Set $h^\lambda(y) = h_1(y) + h_2(y)$.
Since each of $h_1$ and $h_2$ are nonpositive in $\Sigma_\lambda$, using \eqref{eq:h1BasicProperties}, \eqref{eq:Final-Proof-QLambda-Estimate} and \eqref{eq:h2BasicProperties}, we see that $a_1$ may be chosen sufficiently large and depending on $a_2$ such that $L_i(w^\lambda + h^\lambda)(y) \leq 0$ in $\Sigma_\lambda$. Now, if $c_i\leq 0$, then $B_i(w^\lambda + h^\lambda)\leq 0$ in $\bdy'\Sigma_\lambda \intersect \overline B_{4\lambda}$ holds trivially. If $c_i>0$, then using the estimates for $\abs{h_1}$ and $\abs{h_2}$ along with lemma \ref{lem:Mean-Value-Coefficient-Estimates-Ti-Unbounded} and \eqref{eq:h2BasicProperties}, we see that there is $0<\epsilon = \epsilon(\lambda, a_1, a_2)$ such that if $c_i<\epsilon$ then $B_i(w^\lambda + h^\lambda)\leq 0$ on $(\bdy'\Sigma_\lambda \intersect \overline{\mathcal O}_\lambda)\setminus B_{4\lambda}$. Finally, arguing similarly to the proof of lemma \ref{lem:Moving-Spheres-Can-Start} we see that the moving-sphere process can start at $\lambda_0 = 1/2$. Then arguing as in the proof of lemma \ref{lem:Vanishing-Ti-Unbounded}, we see that the spheres can be moved to $\lambda_1 = 2$, which is a contradiction. Theorem \ref{thm:Main} is established.
\end{proof}

\section{Energy Estimate}
	\label{section:Energy-Estimate}

In this section we give an overview of the proof of Corollary \ref{coro:Energy-Estimate}. The major step in the proof is the derivation of the Harnack-type inequality. Since the proof of Corollary \ref{coro:Energy-Estimate} is standard once Thoerem \ref{thm:Main} is obtained (see \cite{Li1995}, \cite{HanLi1999} and \cite{LiZhang2003} for details), only the key points of the proof will be mentioned here. \\

First, use the selection process of Schoen to locate all large local maximums of $u$ in $B_{2/3}^+$. Surrounding each local maximizer of $u$, there is a neighborhood in which $u$ is well-approximated by a standard bubble, the majority of whose energy is in this neighborhood. The key information revealed by the Harnack-type inequality is that the distance between the local maximizers of $u$ is not too small.\\

Due to the local nature of the equations considered in this article, the approach in controlling this distance between maximizers of $u$ is slightly different than the approach used in \cite{Li1995} so we mention it now. For the local equations, it is not possible to find two local maximizers of $u$ that are mutually closest to each other. Each local maximizer certainly has a second maximizer which is closest to it, but there may be a third local maximizer whose distance to the second local maximizer is smaller than the distance from the first local maximizer to the second local maximizer. To overcome this difficulty, rescale the equation so that the distance from the first local maximizer to the nearest local maximizer is one. The Harnack-type inequality forces the values of $u$ at these two local maximum points to be comparable. The comparability of these two maximum values ensures that no two bubbles can tend to the same blow-up point. Indeed, if two bubbles tend to the same blow-up point, then a harmonic function with positive second-order term can be constructed. This function will give a contradiction in the Pohozaev identity. \\

With the distance between local maximizers of $u$ controlled, one can use standard elliptic theory to show that near a large local maximum, $u$ behaves like a rapidly decaying harmonic function. This behavior yields the energy estimate in Corollary \ref{coro:Energy-Estimate}.

\section{Appendix}
		
\subsection{Green's Function Estimates}\label{subsection:Greens-Estimates-Appendix}

\begin{lem}\label{lem:GreensEstimates}
	Let
	\begin{equation*}
			\begin{array}{l}
				A = \{\eta\in \Sigma_\lambda: \abs{ y - \eta}\leq (\abs y - \lambda)/3\}\\
				B = \{\eta\in \Sigma_\lambda: \abs{ y - \eta}\geq (\abs y - \lambda)/3\text{ and }
					\abs \eta \leq 8\lambda\}\\
				D = \{ \eta \in \Sigma_\lambda: \abs \eta \geq 8\lambda\}.
			\end{array}
	\end{equation*}
	There exist positive constants $C_1$ and $C_2$ depending only on $n$ such that the following estimates hold.
	\begin{enumerate}
		\item For all $\lambda< \abs y \leq 4\lambda$,
			\begin{equation}\label{eq:GLowerBoundySmall}
				G^\lambda(y, \eta)
					\geq
					C_1\frac{(\abs y - \lambda)(\abs \eta - \lambda)}{\lambda^n}
				\qquad
				\eta\in \Omega_\lambda
			\end{equation}
			and
			\begin{equation}\label{eq:GUpperBoundySmall}
			G^\lambda(y, \eta) \leq
				\left\{
					\begin{array}{ll}
						C\abs{y - \eta}^{2-n}
						&
						\eta\in A
						\\
						C
						\frac{(\abs y - \lambda)(\abs \eta^2 - \lambda^2)}
							{\lambda\abs{y - \eta}^n}
						\leq
						C\frac{(\abs y - \lambda)(\abs \eta- \lambda)}{\abs{y - \eta}^n}
						&
						\eta \in B
						\\
						C\frac{(\abs y - \lambda)(\abs\eta^2 - \lambda^2)}
							{\lambda \abs{y - \eta}^n}
						\leq
						C\frac{\abs y - \lambda}{\lambda} \abs\eta^{2-n}
						&
						\eta\in D
					\end{array}
				\right.
			\end{equation}
		\item For all $\abs y \geq 4\lambda$, both
			\begin{equation}\label{eq:GLowerBoundyLarge}
				G^\lambda(y, \eta)
					\geq
						C\frac{(\abs \eta - \lambda)(\abs y^2 - \lambda^2)}
						{\lambda\abs{y  - \eta}^n}
					\geq
						C\frac{\abs \eta - \lambda}{\lambda} \abs y^{2-n}
					\qquad
					\eta\in \Omega_\lambda
			\end{equation}
			and
			\begin{equation}\label{eq:GUpperBoundyLarge}
				G^\lambda(y, \eta)
					\leq
					C\abs{y - \eta}^{2-n}
					\qquad
					\eta\in \Sigma_\lambda\setminus \Omega_\lambda.
			\end{equation}
	\end{enumerate}
\end{lem}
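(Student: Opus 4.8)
The plan is to distill all the stated inequalities from a single two-sided ``master estimate'' obtained by applying the mean value theorem to the explicit formula \eqref{eq:Greens-Function-Formula}. First I would set $P = \abs{y - \eta}^2$ and $Q = (\abs y/\lambda)^2\abs{y^\lambda - \eta}^2$, so that \eqref{eq:Greens-Function-Formula} reads $G(y,\eta) = \frac{1}{(n-2)\sigma_n}\bigl( P^{-(n-2)/2} - Q^{-(n-2)/2}\bigr)$. A direct expansion using $y^\lambda = \lambda^2 y/\abs y^2$ gives the key identity
\begin{equation*}
	Q - P = \frac{(\abs y^2 - \lambda^2)(\abs \eta^2 - \lambda^2)}{\lambda^2},
\end{equation*}
so in particular $Q \geq P \geq 0$ on $\Sigma_\lambda$. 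Applying the mean value theorem to $t\mapsto t^{-(n-2)/2}$ on $[P,Q]$ yields, for some $\xi = \xi(y,\eta) \in [P,Q]$,
\begin{equation*}
	G(y,\eta) = \frac{1}{2\sigma_n}\, \xi^{-n/2}\, \frac{(\abs y^2 - \lambda^2)(\abs \eta^2 - \lambda^2)}{\lambda^2},
\end{equation*}
and bounding $\xi$ above by $Q$ and below by $P = \abs{y-\eta}^2$ gives dimensional constants $c_1, c_2>0$ with
\begin{equation*}
	c_1\, \frac{(\abs y^2 - \lambda^2)(\abs \eta^2 - \lambda^2)}{\lambda^2}\, Q^{-n/2} \leq G(y,\eta) \leq c_2\, \frac{(\abs y^2 - \lambda^2)(\abs \eta^2 - \lambda^2)}{\lambda^2}\, \abs{y - \eta}^{-n}.
\end{equation*}
I would also record the trivial bound $G(y,\eta) \leq \frac{1}{(n-2)\sigma_n}\abs{y-\eta}^{2-n}$, immediate from \eqref{eq:Greens-Function-Formula} since the subtracted term is nonnegative; this already proves \eqref{eq:GUpperBoundyLarge} and the first alternative of \eqref{eq:GUpperBoundySmall} (the case $\eta\in A$).

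The remaining bounds would then be read off by feeding the master estimate into each regime and simplifying the boundary factors. The only tools needed are $\abs y^2 - \lambda^2 = (\abs y - \lambda)(\abs y + \lambda)$, which is comparable to $\lambda(\abs y - \lambda)$ when $\abs y \leq C\lambda$ and to $\abs y^2$ when $\abs y \geq 4\lambda$ (and the analogous statements for $\eta$), together with the triangle inequality for $\abs{y-\eta}$. For \eqref{eq:GLowerBoundySmall}, with $\lambda < \abs y \leq 4\lambda$ and $\eta \in \Omega_\lambda$: on $\Omega_\lambda$ one has $\abs \eta \leq 2\lambda$ and $\abs{y - \eta}\leq C\lambda$, hence $Q \leq C\lambda^2$ and $Q^{-n/2}\geq c\lambda^{-n}$; combining with $\abs y^2 - \lambda^2 \asymp \lambda(\abs y - \lambda)$ and $\abs \eta^2 - \lambda^2\asymp \lambda(\abs \eta - \lambda)$ gives the claim. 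For \eqref{eq:GUpperBoundySmall} with $\eta \in B$: the master upper bound together with $\abs y^2 - \lambda^2 \leq C\lambda(\abs y - \lambda)$ and, since $\abs\eta \leq 8\lambda$ on $B$, $\abs\eta^2 - \lambda^2 \leq C\lambda(\abs\eta - \lambda)$ yields both displayed forms; for $\eta \in D$ one instead uses $\abs{y-\eta}\geq \abs\eta - \abs y \geq \abs\eta/2$ and $\abs\eta^2 - \lambda^2 \leq \abs\eta^2$. For \eqref{eq:GLowerBoundyLarge}, with $\abs y \geq 4\lambda$ and $\eta \in \Omega_\lambda \subset B_{2\lambda}$: here $\abs{y - \eta}$ is comparable to $\abs y$, so $Q \leq C\abs y^2$ and $Q^{-n/2} \geq c\abs y^{-n}$, and inserting $\abs y^2 - \lambda^2 \asymp \abs y^2$, $\abs\eta^2 - \lambda^2 \asymp \lambda(\abs\eta-\lambda)$ into the master lower bound produces both displayed forms.

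I do not expect an essential difficulty here: the content is the identity for $Q - P$ and the observation that the mean value theorem turns \eqref{eq:Greens-Function-Formula} into a product of the two boundary factors and a negative power of a quantity pinched between $\abs{y-\eta}^2$ and $Q$. The rest is bookkeeping --- in each regime one decides whether to bound $\xi$ above by $Q$ (when a lower bound on $G$ is wanted) or to use $\xi^{-n/2}\leq\abs{y-\eta}^{-n}$ (for an upper bound), and then simplifies the boundary factors according to whether the relevant point lies within a bounded multiple of $\lambda$ of $\bdy B_\lambda$ or far from it. Accordingly the write-up would present the master estimate as a preliminary claim and dispatch \eqref{eq:GLowerBoundySmall}--\eqref{eq:GUpperBoundyLarge} in one short paragraph each.
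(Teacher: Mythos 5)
Your proposal is correct and follows essentially the same route as the paper: the paper's proof writes $\sigma_n G(y,\eta)=\frac{1}{2\lambda^2}(\abs y^2-\lambda^2)(\abs\eta^2-\lambda^2)\int_0^1\ell_t(y,\eta)^{-n/2}\,dt$ with $\ell_t$ interpolating between $\abs{y-\eta}^2$ and $(\abs y/\lambda)^2\abs{y^\lambda-\eta}^2$, which is exactly your mean-value ``master estimate,'' and then reads off the four bounds regime by regime just as you do. The only cosmetic differences are that the paper deduces \eqref{eq:GLowerBoundyLarge} by invoking the symmetry $G(y,\eta)=G(\eta,y)$ instead of your direct use of $\abs{y-\eta}\asymp\abs y$ on $\Omega_\lambda$, and it additionally records that the same bounds hold for the reflected kernel $\overline G(y,\eta)=G(y,\eta)+G(\bar y,\eta)$ used in Section \ref{section:Proof of Theorem}.
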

\begin{proof}
	By \eqref{eq:Greens-Function-Formula} after preforming elementary computations involving the mean-value theorem we obtain
	
	\begin{equation*}
		0
			\leq
			\sigma_n G(y, \eta)
			=
			\frac{1}{2\lambda^2} (\abs y^2 - \lambda^2)(\abs \eta^2 - \lambda^2)
			\int_0^1 \ell_t(y, \eta)^{-\frac n 2}\; dt,
	\end{equation*}
	where
	
	\begin{eqnarray*}
		\ell_t(y, \eta)
			& = &
			t\abs{y - \eta}^2
			+ (1 - t) \left(\frac{\abs y}{\lambda}\right)^2 \abs{ y^\lambda - \eta}^2
			\\
			& =&
			\left(\frac{\abs y}{\lambda}\right)^2 \abs{y^\lambda- \eta}^2
			-
			\frac{t}{\lambda^2} (\abs y^2 - \lambda^2)(\abs \eta^2 - \lambda^2)
			\qquad
			0\leq t\leq 1; \;
			(y, \eta) \in \Sigma_\lambda\times \Sigma_\lambda\setminus \{y = \eta\}.
	\end{eqnarray*}
	For each $(y, \eta)\in \Sigma_\lambda \times \Sigma_\lambda \setminus \{y = \eta\}$, $t\mapsto \ell_t(y, \eta)$ is decreasing and positive, so for such $(y, \eta)$,
	
	\begin{eqnarray}\label{eq:Greens-Function-Mean-Value-Estimates}
		\frac{1}{2\lambda^2} (\abs y^2 - \lambda^2)(\abs \eta^2 - \lambda^2)
			\left(\frac{\abs y}{\lambda}\right)^{- n} \abs{y^\lambda - \eta}^{-n}
			&\leq &
			\sigma_n G(y, \eta)
			\\
			& \leq &
			\frac{1}{2\lambda^2} (\abs y^2 - \lambda^2)(\abs \eta^2 - \lambda^2)
			\abs{ y - \eta}^{-n}.\notag 	
	\end{eqnarray}
	Each of the estimates in \eqref{eq:GLowerBoundySmall}, \eqref{eq:GUpperBoundySmall} and \eqref{eq:GUpperBoundyLarge} follow immediately from either \eqref{eq:Greens-Function-Formula} or from \eqref{eq:Greens-Function-Mean-Value-Estimates}. To show $G(y, \eta)$ satisfies \eqref{eq:GLowerBoundyLarge}, use \eqref{eq:Greens-Function-Mean-Value-Estimates} in addition to the fact that $G(y, \eta) = G(\eta, y)$. \\
	
	To see that \eqref{eq:GLowerBoundySmall}, \eqref{eq:GUpperBoundySmall}, \eqref{eq:GLowerBoundyLarge} and \eqref{eq:GUpperBoundyLarge} hold for $\overline G$, observe that since $G(y, \eta)\geq 0$, $\overline G(y, \eta)\geq G(y, \eta)$. This gives both \eqref{eq:GLowerBoundySmall} and \eqref{eq:GLowerBoundyLarge}. To show that $\overline G$ satisfies \eqref{eq:GUpperBoundySmall} and \eqref{eq:GUpperBoundyLarge}, observe that $G(\bar y, \eta)$ satisfies these inequalities with $y$ replaced by $\bar y$. Since $\abs {\bar y} = \abs y$ and $\abs{\bar y- \eta} \geq \abs{y - \eta}$ for $y, \eta\in \bb R_+^n$, the desired inequalities hold.
\end{proof}

\end{document}